\newcommand{\fixIt}[1]{{\color{black} #1}}
\DeclareMathOperator{\Ima}{Im}
\DeclareMathOperator{\Spn}{Span}
\DeclareMathOperator{\Hom}{Hom}
\DeclareMathOperator{\End}{End}
\DeclareMathOperator{\Ind}{Ind}
\DeclareMathOperator{\ad}{ad}
\DeclareMathOperator{\Ad}{Ad}
\DeclareMathOperator{\mult}{\mathit{m}}
\DeclareMathOperator{\sdim}{sdim}
\DeclareMathOperator{\str}{str}
\DeclareMathOperator{\diag}{diag}
\DeclareMathOperator{\Res}{\mathtt{Res}}
\DeclareMathOperator{\std}{st}
\DeclareMathOperator{\opp}{op}
\newcommand{\sltwo}{\mathfrak{sl}(2)}
\newcommand{\supp}[1]{\mathrm{Supp}\left(#1\right)}
\newcommand{\gl}{\mathfrak{gl}}
\newcommand{\Uk}{\mathfrak{U}}
\newcommand{\Zk}{\mathfrak{Z}}
\newcommand{\gk}{\mathfrak{g}}
\newcommand{\kk}{\mathfrak{k}}
\newcommand{\tk}{\mathfrak{t}}
\newcommand{\hk}{\mathfrak{h}}
\newcommand{\Hk}{\mathfrak{H}}
\newcommand{\bk}{\mathfrak{b}}
\newcommand{\Bk}{\mathfrak{B}}
\newcommand{\ak}{\mathfrak{a}}
\newcommand{\pk}{\mathfrak{p}}
\newcommand{\nk}{\mathfrak{n}}
\newcommand{\Nk}{\mathfrak{N}}
\newcommand{\Sk}{\mathfrak{S}}
\newcommand{\Pk}{\mathfrak{P}}
\newcommand{\qk}{\mathfrak{q}}
\newcommand{\Lk}{\mathfrak{L}}
\newcommand{\D}{\mathfrak{D}}
\newcommand{\C}{\mathbb{C}}
\newcommand{\Z}{\mathbb{Z}}
\newcommand{\quoUk}{\Uk^\kk/(\Uk\kk)^\kk}
\newcommand{\pplus}{\mathfrak{p}^+}
\newcommand{\pminus}{\mathfrak{p}^-}
\newcommand{\ev}{_{\overline{0}}}
\newcommand{\od}{_{\overline{1}}}
\newcommand{\epu}{\epsilon}
\newcommand{\epr}{\alpha\bos}
\newcommand{\dau}{\delta}
\newcommand{\dar}{\alpha\fer}
\newcommand{\bos}{^{\textsc{B}}}
\newcommand{\fer}{^{\textsc{F}}}
\newcommand{\bnf}{^{\textsc{B/F}}}
\newcommand{\HCso}{\Sigma_\perp}
\newcommand{\SEvR}{\mathscr{S}^\mathtt{0}}
\newcommand{\fk}{\mathsf{k}}
\newcommand{\fh}{\mathsf{h}}
\newcommand{\fp}{\mathsf{p}}
\newcommand{\fq}{\mathsf{q}}
\newcommand{\fr}{\mathsf{r}}
\newcommand{\fs}{\mathsf{s}}
\newcommand{\cH}{\mathscr{H}}
\newcommand{\ii}{\mathsf{i}}
\newcommand{\sD}{\mathscr{D}}
\newcommand{\RingEv}{\Lambda^\mathtt{0}}
\newcommand{\hcHomoGK}{{\Gamma^\mathtt{0}}}
\newcommand{\hcIsoGK}{\gamma^\mathtt{0}}
\newcommand{\hcZIso}{\gamma}
\newcommand{\Wmu}{W_\mu}
\newcommand{\Wlambda}{W_\lambda}
\newcommand{\commentout}[1]{}
\newtheorem{main}{Theorem}
\theoremstyle{remark}
\theoremstyle{plain}
\newtheorem{thm}{Theorem}[section]
\newtheorem*{thm*}{Theorem}
\newtheorem{prop}[thm]{Proposition}%[section]
\newtheorem*{prop*}{Proposition}
\newtheorem{lem}[thm]{Lemma}
\newtheorem*{lem*}{Lemma}
\theoremstyle{definition}
\newtheorem{defn}[thm]{Definition}
\newtheorem{rmk}{Remark}
\newtheorem{cor}[thm]{Corollary}
\title{Eigenvalues of supersymmetric Shimura operators and interpolation polynomials}
\author{Siddhartha SAHI}
\address{Department of Mathematics, Rutgers University\\
Hill Center for the Mathematical Sciences\\
110 Frelinghuysen Rd.\\
Piscataway, NJ 08854-8019}
\email{sahi@math.rutgers.edu}
\author{Songhao ZHU}
\address{School of Mathematics, Georgia Institute of Technology\\
Atlanta, GA, 30332, USA}
\email{zhu.math@gatech.edu}
\thanks{The research of S. Sahi was partially supported by NSF grants DMS-1939600 and 2001537, by Simons Foundation grant 509766, and by the AIM SQuaRE program. }
\date{\today}
\begin{document}

%========================ABSTRACT==========================================================================
\begin{abstract}
The Shimura operators are a certain distinguished basis for invariant differential operators on a Hermitian symmetric space. Answering a question of Shimura, Sahi and Zhang showed that the Harish-Chandra images of these operators are specializations of certain $BC$-symmetric interpolation polynomials that were defined by Okounkov. 

We consider the analogs of Shimura operators for the Hermitian symmetric superpair $(\gk,\kk)$ where $\gk= \gl(2p|2q)$ and $\kk=\gl(p|q)\oplus\gl(p|q)$ and we prove their Harish-Chandra images are specializations of certain $BC$-supersymmetric interpolation polynomials introduced by Sergeev and Veselov.
%For $p=q=1$ this was proved earlier by one of us in \cite{Z22}, and it was shown that the general case would follow from the $\kk$-sphericity of certain $\gk$-modules. In the present paper, we give a different argument which avoids this conjecture.
\end{abstract}

%========================TITLE==========================================================================

\maketitle

%========================TABLEOFCONTENTS==========================================================================
%\tableofcontents

%==================================================================================================

\section{Introduction}
The Harish-Chandra homomorphism for a symmetric space gives an explicit isomorphism of the algebra of invariant differential operators with a certain polynomial algebra, and knowing the Harish-Chandra image of an operator allows one to determine its spectrum. In \cite{Shimura90} Shimura introduced a basis for the algebra of invariant differential operators on a {\it Hermitian} symmetric space, and formulated the problem of determining their eigenvalues.  Shimura's problem was solved in \cite{SZ2019PoSo} where it was shown that the Harish-Chandra images of Shimura operators are specializations of certain interpolation polynomials of Type \textit{BC} introduced by Okounkov \cite{Okounkov98}.

Our main result, Theorem~\ref{thm:A} below, solves the analogous problem for the Hermitian symmetric superpair $(\gk, \kk)$ with $\gk = \gl(2p|2q)$ and $\kk = \gl(p|q) \oplus \gl(p|q)$. 

Let $\Uk$ be the universal enveloping algebra of $\gk$. 
The algebra of invariant differential operators is then given by the quotient of $\kk$-invariants $\D :=\quoUk$. In Section~\ref{subsec:superSO} we describe a basis $\sD_\mu$ of $\D$ indexed by the set $\cH := \cH(p, q)$ of $(p,q)$-hook partitions, which are partitions $\mu$ that satisfy $\mu_{p+1}\leq q$. 
These $\sD_\mu$ are the super analogs of Shimura operators, and their definition involves the Cartan decomposition $\gk =\kk \oplus \pk=\kk \oplus \pplus \oplus \pminus$, and the multiplicity-free $\kk$-decompositions of $\Sk(\pplus)$ and $\Sk(\pminus)$, which have $\kk$-summands $\Wmu$ and $\Wmu^*$, naturally indexed by $\mu\in\cH$  \cite{CW2000HDfL}.

The Harish-Chandra homomorphism (Section~\ref{sec:prelim}) is an algebra map  $\hcIsoGK:\D\to \Pk(\ak^*)$, where $\ak$ is an even Cartan subspace of $\pk$ and $\Pk(\ak^*)$ is the algebra of polynomials on $\ak^*$. For our pair $(\gk, \kk)$ the image of $\hcIsoGK$ can be identified with the ring $\SEvR_{p, q}$ of even supersymmetric polynomials in $p+q$ variables.
In Section~\ref{sec:BCPoly} we introduce a basis $\{J_\mu:\mu\in \cH\}$ of $\SEvR_{p, q}$. These $J_\mu$ are suitable specializations of the supersymmetric interpolation polynomials of Type \textit{BC} introduced by Sergeev and Veselov \cite{SV09}, which are characterized up to multiple by certain vanishing properties.

\begin{main} \label{thm:A}
    The Harish-Chandra image of $\sD_\mu$ is $k_\mu J_\mu$ where $k_\mu$ is explicitly given in Eq.~(\ref{eqn:kmu}). 
\end{main}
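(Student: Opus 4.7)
The plan is to follow the strategy of Sahi--Zhang in the classical (non-super) setting, adapted to the supersymmetric framework. The key input is that the $BC$-supersymmetric interpolation polynomials $J_\mu$ introduced by Sergeev--Veselov are characterized, up to a scalar multiple, by an extra-vanishing condition at a family of distinguished points of $\ak^*$ indexed by hook partitions $\nu\in\cH(p,q)$ with $\nu\ne\mu$ and $|\nu|\le|\mu|$, together with a bound on total degree. Accordingly, I would aim to show that $\hcIsoGK(\sD_\mu)\in\SEvR_{p,q}$ satisfies these same vanishing and degree constraints, and then pin down the constant $k_\mu$ by evaluating at the single distinguished point $\nu=\mu$.

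First I would unpack the construction of $\sD_\mu$ from Section~\ref{subsec:superSO}. Because $\Sk(\pplus)$ and $\Sk(\pminus)$ decompose $\kk$-multiplicity-freely with summands $\Wmu$ and $\Wmu^*$, there is a canonical $\kk$-projector onto the $\Wmu^*\otimes\Wmu$ isotype in $\Sk(\pminus)\otimes\Sk(\pplus)\cong \Uk/(\Uk\kk)$ under the supersymmetrization identification, and $\sD_\mu$ is the corresponding element of $\D$. This construction forces $\sD_\mu$ to annihilate any $\kk$-highest weight vector $v_\nu$ generating a copy of $W_\nu\subset \Sk(\pplus)$ with $\nu\ne\mu$, and to act by a computable scalar on the highest weight vector of $\Wmu$ itself.

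Second, I would convert these eigenvalue statements into values of the Harish-Chandra image. Viewing $v_\nu\in \Sk(\pplus)\subset \Uk$ and pairing it with a suitable extremal weight vector in the corresponding generalized Verma-type module for $(\gk,\kk)$, the action of any $D\in\D$ on $v_\nu$ is given by $\hcIsoGK(D)$ evaluated at a point $\nu^\natural\in\ak^*$ determined by the highest weight of $\Wmu$ plus the super half-sum-of-roots shift $\rho$. Combined with the vanishing from step one, this yields $\hcIsoGK(\sD_\mu)(\nu^\natural)=0$ for all $\nu\in\cH(p,q)$ with $\nu\ne\mu$ and $|\nu|\le|\mu|$; the PBW filtration of $\Uk$ gives the degree bound $\deg \hcIsoGK(\sD_\mu)\le|\mu|$. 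By the Sergeev--Veselov characterization, this forces $\hcIsoGK(\sD_\mu)=k_\mu J_\mu$. The scalar $k_\mu$ is then computed by comparing the two sides at $\nu=\mu$: on the operator side, the eigenvalue of $\sD_\mu$ on the highest weight vector of $\Wmu$ equals the reciprocal of the norm squared of this vector with respect to a canonical $\kk$-invariant pairing on $\Sk(\pplus)\otimes\Sk(\pminus)$, producing a product over the super weights of $\Wmu$; on the polynomial side, the value of $J_\mu$ at $\mu^\natural$ is its known normalization.

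The main obstacle I anticipate is the atypicality of the $\kk$-modules $\Wmu$ for hook partitions: unlike in the purely even case, the multiplicity-free decomposition of $\Sk(\pplus)$ involves modules whose highest weight structure and invariant pairings are subtler, and some Sergeev--Veselov polynomials are known to degenerate or vanish identically for bad choices of parameters. Ensuring that our specialization $J_\mu$ is genuinely nonzero for $\mu\in\cH(p,q)$, that the point $\mu^\natural$ lies off the vanishing locus, and that the super $\rho$-shift aligns with Sergeev--Veselov's normalization of the evaluation points will therefore be the most delicate part of the argument, and will ultimately yield the explicit expression for $k_\mu$ in Eq.~(\ref{eqn:kmu}).
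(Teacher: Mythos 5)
Your proposal follows the paper's high-level strategy (vanishing properties of $\hcIsoGK(\sD_\mu)$ at interpolation points, a degree bound, and a normalization at $\mu$), but it contains a genuine gap at the crucial middle step.

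You assert that ``the action of any $D\in\D$ on $v_\nu$ is given by $\hcIsoGK(D)$ evaluated at a point $\nu^\natural\in\ak^*$.'' This is precisely the kind of statement that needs careful justification. The paper's Theorem~\ref{cite:scalarClaim} does establish that $D\in\Uk^\kk$ acts on the spherical vector by the scalar $\hcHomoGK(D)(\lambda|_\ak+\rho)$, but only for a \emph{finite-dimensional irreducible} $\kk$-spherical module $V(\lambda,\bk)$, where Alldridge's sphericity criterion and the uniqueness of the spherical vector are available. The generalized Verma modules $I_\lambda$ are typically infinite-dimensional and reducible, and no analogous direct link between the spherical-vector eigenvalue of $\sD_\mu$ on $I_\lambda$ and the polynomial $\hcIsoGK(\sD_\mu)$ evaluated at $2\overline{\lambda^\natural}+\rho$ is at hand. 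The paper works around this obstruction through Theorem~\ref{thm:B}: one first proves that the natural map $\pi:\Zk\to\D$ is surjective, so $\sD_\mu$ lifts to a central element $Z_\mu$. A central element acts on the \emph{entire} highest weight module $I_\lambda$ by the central character, which is computed by $\hcZIso(Z_\mu)$; one then translates this into a statement about $\hcIsoGK(\sD_\mu)$ via the commutativity of Diagram~(\ref{diag:tight}), i.e., $\hcIsoGK\circ\pi=\Res\circ\hcZIso$. That commutativity is itself a nontrivial result whose proof (Proposition~\ref{prop:commDiag}) uses the Zariski density of the $\ak$-weights of the finite-dimensional spherical modules $V(\lambda^\natural)$, applying Theorem~\ref{cite:scalarClaim} only to those modules. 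Your proposal skips both the surjectivity of $\pi$ and the density argument, and without them the passage from ``$\sD_\mu$ kills the appropriate vector in $I_\lambda$'' to ``$\hcIsoGK(\sD_\mu)$ vanishes at $2\overline{\lambda^\natural}+\rho$'' does not go through.

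Two smaller points. First, the degree bound should be $\deg\hcIsoGK(\sD_\mu)\le 2|\mu|$, not $|\mu|$: $D_\mu\in\Sk^{|\mu|}(\pminus)\,\Sk^{|\mu|}(\pplus)$ has PBW-filtration degree $2|\mu|$, consistent with $\deg J_\mu=2|\mu|$. Second, your method of fixing $k_\mu$ by computing the eigenvalue of $\sD_\mu$ on (what you call) the highest weight vector of $\Wmu$ via an invariant pairing is a plausible alternative, in the spirit of Sahi--Zhang, but the paper instead identifies the top homogeneous part of $\sum_{\mu\in\cH^m}\hcIsoGK(\sD_\mu)$ with a power of $p_2^{(p,q)}$ and matches coefficients against the Jack-polynomial expansion~(\ref{eqn:expJack}) and Sergeev--Veselov's tableau formula for $\hat I_\mu$. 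Either route could give the constant, but you would need to carry out the pairing computation and verify it against the $BC$-normalization in Theorem~\ref{cite:BCPoly}; as written, that step is only sketched.
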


We will deduce this from two other results. First, let $\Zk$ be the center of $\Uk$, then we have a natural map $\pi: \Zk \hookrightarrow \Uk^\kk\twoheadrightarrow \D$, and we prove the following result.

\begin{main} \label{thm:B}
The map $\pi$ is surjective. In particular, there exist $Z_\mu \in \Zk$ such that $\pi(Z_\mu)=\sD_\mu$.
\end{main}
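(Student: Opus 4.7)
The plan is to establish surjectivity of $\pi$ by passing to Harish-Chandra images and reducing to a concrete statement about polynomial restriction. Let $\hcZIso:\Zk\to\mathscr{R}$ denote the super Harish-Chandra isomorphism of the center, whose image $\mathscr{R}$, by a theorem of Sergeev, is the ring of supersymmetric polynomials in the $2p+2q$ variables $(t_1,\ldots,t_{2p}\mid s_1,\ldots,s_{2q})$ on the diagonal Cartan $\hk^*$. Consider the diagram
\begin{equation*}
\begin{tikzcd}
\Zk \arrow[r, "\hcZIso"] \arrow[d, "\pi"'] & \mathscr{R} \arrow[d, "\mathrm{res}"] \\
\D \arrow[r, "\hcIsoGK"'] & \SEvR_{p,q}
\end{tikzcd}
\end{equation*}
where the right vertical map $\mathrm{res}$ is the polynomial pullback along $\ak^*\hookrightarrow\hk^*$, up to an appropriate $\rho$-shift. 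Since $\hcZIso$ is an isomorphism and $\hcIsoGK$ is injective with image $\SEvR_{p,q}$, once one checks that this diagram commutes, surjectivity of $\pi$ reduces to surjectivity of $\mathrm{res}$ onto $\SEvR_{p,q}$.

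To compute $\mathrm{res}$ concretely, for the symmetric superpair $(\gl(2p|2q),\gl(p|q)\oplus\gl(p|q))$ the Cartan subspace $\ak$ has dimension $p+q$ and, in a suitable basis, sits inside $\hk$ as $\diag(z_1,\ldots,z_p,-z_1,\ldots,-z_p\mid w_1,\ldots,w_q,-w_1,\ldots,-w_q)$. Hence $\mathrm{res}$ is the substitution $t_i\mapsto z_i,\ t_{p+i}\mapsto -z_i,\ s_j\mapsto w_j,\ s_{q+j}\mapsto -w_j$. Applied to the supersymmetric power sums $p_k=\sum_i t_i^k-\sum_j s_j^k$ that generate $\mathscr{R}$, this yields
\begin{equation*}
\mathrm{res}(p_k)=\bigl(1+(-1)^k\bigr)\Bigl(\sum_{i=1}^p z_i^k-\sum_{j=1}^q w_j^k\Bigr),
\end{equation*}
which vanishes for odd $k$ and equals $2\,p_k(z\mid w)$ for even $k$. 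Since $\SEvR_{p,q}$ is generated by the even-degree supersymmetric power sums $\{p_{2k}(z\mid w):k\geq 1\}$, surjectivity of $\mathrm{res}$ follows immediately.

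The main technical obstacle is to verify that the diagram commutes, i.e., that $\hcIsoGK\circ\pi=\mathrm{res}\circ\hcZIso$. This requires comparing the two $\rho$-shifts—the one for the super Harish-Chandra map on $\Zk$ (involving half the sum of the positive roots of $\gk$) against the one for the spherical Harish-Chandra map on $\D$ (involving half the sum of the positive restricted roots)—and checking compatibility under the inclusion $\ak^*\hookrightarrow\hk^*$. In the classical Hermitian symmetric setting this is a standard exercise with the Iwasawa decomposition; the supersymmetric version should go through analogously, with due care about the contributions of odd positive roots to each $\rho$. Once commutativity is established, the power-sum computation above completes the proof, and lifts $Z_\mu\in\Zk$ for each basis element $\sD_\mu$ of $\D$ can be read off by inverting $\hcZIso$ applied to any preimage of $\hcIsoGK(\sD_\mu)$ under $\mathrm{res}$.
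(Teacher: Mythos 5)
Your overall strategy mirrors the paper's: form the square relating $\hcZIso$, $\hcIsoGK$, $\pi$, and a restriction map; show the restriction map is surjective by hitting the even power-sum generators; and conclude that $\pi$ is surjective. Your power-sum computation is essentially identical to the paper's Proposition~\ref{prop:Res} (the paper uses the Cayley-transformed Cartan $\hk = c(\tk)$ with coordinates $x_{\pm i}, y_{\pm j}$, so its substitutions carry a factor $\pm\tfrac12$, but the odd/even dichotomy is the same). Citing the known fact from \cite{Z22} that $\hcIsoGK$ is an isomorphism onto $\SEvR_{p,q}$ is also legitimate, even though the paper reproves it (Proposition~\ref{prop:gamma'}).

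The genuine gap is commutativity of the diagram, which you defer with the remark that it is ``a standard exercise with the Iwasawa decomposition'' and that ``the supersymmetric version should go through analogously.'' This is precisely where the super case departs from the classical one and where the actual mathematical content of the proof lies. The classical argument compares the two $\rho$-shifts by testing both sides against the characters of \emph{all} finite-dimensional spherical representations, which are plentiful by Cartan--Helgason. In the super setting there is no Cartan--Helgason theorem — determining which finite-dimensional $V(\lambda^\natural)$ are $\kk$-spherical is exactly the open conjecture the paper is structured to avoid. The paper's Proposition~\ref{prop:commDiag} circumvents this by invoking Alldridge's sufficient criterion (Theorem~\ref{cite:AlldridgeSph}, specialized in Proposition~\ref{prop:AlldridgePart}) to produce a Zariski-dense set $\mathbb{K}+\rho \subseteq \ak^*$ of weights coming from spherical modules, then uses Lemma~\ref{lem:ukScalar} and Theorem~\ref{cite:scalarClaim} to match $\Res\circ\hcZIso$ against $\hcHomoGK'\circ\pi$ on that dense set. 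A purely formal $\rho$-shift comparison does not establish the diagram's commutativity; one needs this representation-theoretic input, and your proposal omits it entirely.
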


For $\lambda \in \cH$ let $I_\lambda$ be the generalized Verma module for $\gk$ obtained by parabolic induction from  $\qk=\kk \oplus\pplus$ of the $\kk$-summand $\Wlambda$ of $\Sk(\pplus)$ (Section~\ref{sec:GVM}). 

\begin{main}\label{thm:C}
  The central element $Z_\mu$ acts on $I_\lambda$ by $0$ unless  $\lambda_i\ge\mu_i$ for all $i$. 
\end{main}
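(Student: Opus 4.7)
The plan is to exploit the fact that a central element acts by a single scalar on $I_\lambda$, and to reduce the computation of that scalar to an explicit $\kk$-equivariant analysis of the Shimura operator $\sD_\mu$ acting on the canonical top copy of $\Wlambda$ inside $I_\lambda$.

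First, since $Z_\mu\in\Zk$, it acts on $I_\lambda$ as multiplication by a single scalar $\chi_\lambda(Z_\mu)$; the theorem therefore reduces to showing $\chi_\lambda(Z_\mu)=0$ whenever some $\lambda_i<\mu_i$. Using Theorem B I fix a lift $\tilde{\sD}_\mu\in\Uk^\kk$ of $\sD_\mu$, so that $Z_\mu=\tilde{\sD}_\mu+y$ with $y\in(\Uk\kk)^\kk$. Using the PBW isomorphism $I_\lambda\simeq\Sk(\pminus)\otimes\Wlambda$ (canonical embedding of $\Wlambda$ at the top), and projecting the identity $Z_\mu v_\lambda=\chi_\lambda(Z_\mu)v_\lambda$ onto the $1\otimes\Wlambda$ summand, I identify $\chi_\lambda(Z_\mu)$ with the scalar produced by $\tilde{\sD}_\mu$ on a highest-weight vector $v_\lambda\in\Wlambda$, after absorbing the correction $yv_\lambda$ by $\kk$-equivariance (the correction lies in $\Sk(\pminus)_{>0}\otimes\Wlambda$ plus a scalar from the $\kk$-action on $v_\lambda$, which can be tracked explicitly).

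Next, I use the explicit construction of $\sD_\mu$ from the canonical $\kk$-invariant $\mathrm{Id}_\mu=\sum_i e^\mu_i\otimes f^\mu_i\in(\Wmu\otimes\Wmu^*)^\kk\subset\Sk(\pplus)\otimes\Sk(\pminus)$, where $\{e^\mu_i\}$ is a basis of $\Wmu\subset\Sk(\pplus)$ and $\{f^\mu_i\}$ is its dual basis in $\Wmu^*\subset\Sk(\pminus)$. Acting on $v_\lambda$ with $\tilde{\sD}_\mu=\sum_i e^\mu_i f^\mu_i$ in PBW order with $\Sk(\pplus)$ on the left: since $\Sk(\pplus)_{>0}\cdot v_\lambda=0$, only the commutators $[\pplus,\pminus]\subset\kk$ produced by swapping the $\pplus$-factors past the $\pminus$-factors contribute. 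After collecting terms, the surviving contribution is an element of $\Uk(\kk)$ acting on $v_\lambda\in\Wlambda$; by Schur's lemma this yields a scalar $s_\mu(\lambda)$.

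Third, I identify $s_\mu(\lambda)$ via a super-Capelli-type content computation. The $\kk$-invariant $\mathrm{Id}_\mu$ realizes the canonical line in $\Wmu\otimes\Wmu^*$, so through the Howe/Cauchy-type pairing of $\Sk(\pplus)$ and $\Sk(\pminus)$ as $\kk$-modules, the scalar $s_\mu(\lambda)$ can be written as a product indexed by the cells of $\mu$ of linear factors in the coordinates of $\lambda$. The key combinatorial claim is that at least one factor vanishes whenever some $\lambda_i<\mu_i$, forcing $\chi_\lambda(Z_\mu)=0$.

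The main obstacle is the third step: making precise the Capelli/Pieri-type factorization in the super setting. The $(p,q)$-hook partition combinatorics of \cite{CW2000HDfL} must be handled with care to guarantee that the Pieri-rule vanishing transfers cleanly into vanishing of the content factors. A cleaner route, if available, is to invoke a super Cauchy identity for $(\Sk(\pplus)\otimes\Sk(\pminus))^\kk$ that exhibits directly the Pieri-type decomposition needed and makes the vanishing for $\mu\not\subseteq\lambda$ transparent at the level of $\kk$-isotypic components.
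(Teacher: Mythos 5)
Your approach diverges from the paper's in a way that opens two genuine gaps, both of which the paper's argument is carefully engineered to avoid.

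First, you evaluate $Z_\mu$ on the highest weight vector $v_\lambda\in\Wlambda\subset I_\lambda$ rather than on the $\kk$-spherical vector $\omega\in I_\lambda^\kk$. You write $Z_\mu=\tilde{\sD}_\mu+y$ with $y\in(\Uk\kk)^\kk$ and claim the correction $y.v_\lambda$ ``can be tracked explicitly,'' but this is not straightforward: elements of $\kk$ do \emph{not} annihilate $v_\lambda$ (only the positive nilradical of $\bk^{\std}\oplus\bk^{\opp}$ does, while the Cartan part and the negative part act nontrivially on $\Wlambda$), so $y.v_\lambda$ contributes at every degree including the top one, and there is no clean way to isolate the scalar coming from $\tilde{\sD}_\mu$ alone. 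The paper instead fixes $\omega\in I_\lambda^\kk$ (Proposition~\ref{prop:GVMsph}), for which $\kk.\omega=0$ and hence $y.\omega=0$; this makes $Z_\mu.\omega=D_\mu.\omega$ literally, with no correction to absorb (Eq.~(\ref{eqn:sameAction})). Since a central element acts by the same scalar on all of $I_\lambda$, evaluating on $\omega$ is just as good as evaluating on $v_\lambda$, and it trivializes the issue you flag.

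Second, your Step 3 --- a super-Capelli content factorization giving a closed-form eigenvalue $s_\mu(\lambda)$ with explicit vanishing factors --- is precisely the hard part you flag as the ``main obstacle,'' and it is not resolved in your proposal. Such an eigenvalue formula would be far more than Theorem~\ref{thm:C} demands and is, in effect, a form of Theorem~\ref{thm:A}. The paper shows the scalar is zero \emph{without computing it}: with $D_\mu=\sum\xi^{-\mu}_i\eta^\mu_i$ in $\Sk(\pminus)\Sk(\pplus)$ order, the map $\eta\mapsto\eta.\omega$ is a $\kk$-intertwiner from $\Wmu$ into $I_\lambda$ (Lemma~\ref{lem:homoImage}, using $\kk.\omega=0$). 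The branching rule Proposition~\ref{prop:branchI(l)}, deduced from the Kostant-style weight decomposition Proposition~\ref{prop:wtDecomp} and the observation that $\mu^\natural_\tk-\lambda^\natural_\tk$ must be a nonpositive integral combination of the strongly orthogonal roots $\gamma\in\HCso\subset\Sigma(\pplus\ev,\tk)$, shows $\Hom_\kk(\Wmu,I_\lambda)=0$ whenever $\lambda\nsupseteq\mu$. Hence all $\eta^\mu_i.\omega=0$, so $D_\mu.\omega=0$ immediately. To repair your proposal you should (a) move from $v_\lambda$ to $\omega$, and (b) replace the Capelli-eigenvalue plan with the $\kk$-module branching argument, which makes the vanishing a soft consequence of isotypic considerations rather than a hard explicit computation.
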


We now sketch the key ideas in our paper.

The proof of Theorem~\ref{thm:C} is a generalization of an analogous argument in \cite{SZ2019PoSo} and relies on two key properties of $I_\lambda$: (1) it is $\kk$-spherical,  and (2) it satisfies certain branching rule (Proposition~\ref{prop:branchI(l)}). 
The module $I_\lambda$ also has a natural grading by the center of $\kk$ which leads to a slightly weaker version of Theorem~\ref{thm:C} that equally proves Theorem~\ref{thm:A}. 

%For Theorem~\ref{thm:B} let $\hk=\ak\oplus \tk_+$ be a Cartan subalgebra of $\gk$ containing $\ak$, let $\hcZIso:\Zk \to \Pk(\hk^*)$ be the usual Harish-Chandra homomorphism  \cite{musson2012lie}, and consider the diagram
For Theorem~\ref{thm:B}, we let $\hk=\ak\oplus \tk_+$ be a Cartan subalgebra of $\gk$ containing $\ak$, and let $\hcZIso:\Zk \to \Pk(\hk^*)$ be the usual Harish-Chandra homomorphism  \cite{musson2012lie}. Consider the diagram
\begin{equation} \label{diag:tight}
    \begin{tikzcd}
    \Zk \arrow[r, "\pi"] \arrow[d, "\hcZIso"'] & \D \arrow[d, "\hcIsoGK"] \\
    \Pk(\hk^*) \arrow[r, "\Res"]                            & \Pk(\ak^*)
    \end{tikzcd}
\end{equation}
where $\Res$ is the restriction map with respect to the decomposition $\hk=\ak\oplus\tk_+$. We first show, with the help of Proposition~\ref{prop:sphZariski}, that the diagram commutes, and then we prove that the image of $\hcZIso$ surjects onto the image of $\hcIsoGK$ using an explicit description of the generators of $\SEvR_{p, q}$ from \cite{Stembridge85supsym}. This implies Theorem~\ref{thm:B}. 

For Theorem~\ref{thm:A} the main point is to show that $\hcIsoGK(\sD_\mu)$ satisfies the vanishing properties that characterize $J_\mu$.  The action of $Z_\mu$ on $I_\lambda$  can be expressed in terms of its Harish-Chandra image, and thus Theorem~\ref{thm:C} implies certain vanishing properties for $\hcZIso(Z_\mu)$. Now we use Diagram (\ref{diag:tight}) to deduce the vanishing properties for $\hcIsoGK(\sD_\mu)$. 

The results of this paper represent a substantial advance over the results obtained in \cite{Z22}. 

Specifically, \cite{Z22} proves the special case $p=q=1$ of Theorem~\ref{thm:A}, and reduces the general case to $\kk$-sphericity of certain finite-dimensional $\gk$-modules \cite[Conjecture~1, Theorem~A]{Z22}. For ordinary Lie algebras, the Cartan--Helgason Theorem \cite{Helgason2} provides necessary and sufficient conditions for such $\kk$-sphericity. However, its precise analog for Lie superalgebras remains open and seems quite hard \fixIt{(see, however, \cite{ShermanCH} for recent results in this direction). }

In this paper, we offer a completely new approach to proving Theorem~\ref{thm:A} without using any conjecture. 
We show that the set of spherical weights, defined below, is Zariski dense in $\ak^*$ (Proposition~\ref{prop:sphZariski}), and this is sufficient for our purposes. Our argument is self-contained, algebraic, and independent of the results on sphericity in the literature\footnote{ see e.g. \textit{Spherical representations of Lie supergroups} by Alexander Alldridge and Sebastian Schmittner, J. Funct. Anal. 268 (2015), no. 6, 1403--1453}. 
\fixIt{One key difficulty in \cite{Z22} is that a super analog of the Sahi--Zhang branching lemma for $V_\lambda$ (\cite[Lemma~5.1]{SZ2019PoSo}), is not available. This was circumvented in \cite{Z22} by working with the cospherical vector in the restricted dual of an induced module similar to $I_\lambda$; this leads to weaker vanishing properties of $\hcIsoGK(\sD_\mu)$, which nevertheless suffice to characterize it. In the present paper, the added flexibility of using a central element ($Z_\mu$) allows us to apply the (easier) branching result for $I_\lambda$, proved in Proposition~\ref{prop:branchI(l)} below.} 

The Shimura problem discussed in this paper is closely related to the Capelli problem studied in \cite{KS1993, S94} for Lie algebras, in \cite{SaSaGLMN, AlSaSa}, and \cite{SSS2020} for Lie superalgebras, and more recently in \cite{LSS1, LSS2, LLS3} for quantum groups.  
%Thus it is possible that the results of this paper can be generalized to many of the Hermitian symmetric superpairs constructed using Jordan superalgebras, e.g. in \cite[Theorem~1.4]{SSS2020}, and also to the quantum setting, and we hope to address this in future work.
Thus, it is possible that the results of this paper can be generalized to many of the Hermitian symmetric superpairs constructed using Jordan superalgebras, e.g., in \cite[Theorem~1.4]{SSS2020}. 
In particular, we expect our Theorem~\ref{thm:A} to hold for the pairs 
\[
(\mathfrak{osp}(4n|2m), \mathfrak{gl}(m|2n)), (\mathfrak{osp}(m+3|2n), \mathfrak{gosp}(m+1|2n)), (D(2|1,t), \mathfrak{gl}(1|2)), (F(3|1), \mathfrak{gosp}(2|4)).
\] 
These are precisely the pairs denoted $(\gk^\flat,\gk)$ in Cases II--V in \cite[Table~4]{SSS2020}. 
%It also appears that our method could be applied to the ``general'' $\gl$ pair $(\mathfrak{gl}(p + r|q + s), \mathfrak{gl}(p|q) \oplus \mathfrak{gl}(r|s))$ but may require some attention to coordinate-specific calculations. 
In addition, we hope to address the quantum version of this problem in future work.

\fixIt{It is natural to ask if the results here can be extended to more general pairs of the form $(\gl(p+r|q+s), \gl(p|q) \oplus \gl(r|s))$, however it appears that this would require additional techniques. Specifically, in our argument we need the fact that the irreducible quotient $V_\lambda$ of $I_\lambda$ is {\em finite dimensional}. However, this need not hold for more general pairs; see Remark~\ref{rmk:infDim} in Appendix~\ref{app:A}.}

The structure of the paper is as follows.
In Section~\ref{sec:prelim}, we review some known results related to the pair $(\gl(2p|2q), \gl(p|q) \oplus \gl(p|q))$. In particular, we discuss the Iwasawa decomposition, the two Harish-Chandra homomorphisms, and the Cheng--Wang decomposition of $\Sk(\pk^\pm)$.
%, and Alldridge's necessary conditions for $\kk$-sphericity of finite dimensional irreducible $\gk$-modules. 
We also present the Sergeev--Veselov supersymmetric interpolation polynomials of Type \textit{BC}.
In Section~\ref{sec:setUp}, we define the supersymmetric Shimura operators. 
In Section~\ref{sec:HCIsos}, we describe the image of $\hcIsoGK$ as the ring of even supersymmetric polynomials. We prove the surjectivity of $\Res$ in Subsection~\ref{subsec:2Im&Res} (Proposition~\ref{prop:Res}) and Theorem~\ref{thm:B} in Subsection~\ref{subsec:commDiag}.
In Section~\ref{sec:GVM} we discuss the generalized Verma modules and prove Theorem~\ref{thm:C}, which provides the necessary representation theoretic machinery for the vanishing properties.
Finally in Section~\ref{sec:BCPoly}, we reformulate the Type \textit{BC} interpolation polynomials in our context and then prove Theorem~\ref{thm:A} using the results obtained in previous sections. 
In the appendix, we prove some useful results regarding spherical and cospherical modules, in particular Proposition~\ref{prop:sphZariski} on spherical weights, which may be of independent interest.

%==================================================================================================

\section{Preliminaries} \label{sec:prelim}
%\fixItBetter{Throughout the paper, we assume that the base field is $\C$.}
%The vector superspace of superdimension $m|n$ is denoted as $\C^{m|n}$ where the even subspace is $\C^m$ and the odd subspace is $\C^n$. 
%A $\ZeeII$-graded algebra is called a superalgebra. For any superalgebra $A$, we denote its even subspace as $A\ev$ and the odd subspace as $A\od$. For $x\in A_i$, we write $|x| = i$ for its parity $i$.
%The space of linear endomorphisms $\End(\C^{m|n})$ is naturally $\ZeeII$-graded and identified with the space of $(m+n)\times (m+n)$ matrices. 
%We denote the Lie superalgebra structure defined on $\End(\C^{m|n})$ as $\gl(m|n)$. 
%In this subsection, we denote the standard diagonal Cartan subalgebra of $\gl(m|n)$ as $\tk$, that is, $\tk = \Spn\{E_{i, i}: 1\leq i \leq m+n\}$ where $E_{i, j}$ denotes the matrix with 1 in the $(i, j)$-th entry and 0 elsewhere.

\subsection{Symmetric superpairs}  \label{subsec:gkHCSph}
Let $(\gk, \kk)$ be a (complex) symmetric superpair which admits an Iwasawa decomposition, where $\gk$ is basic simple or $\gk = \gl(m|n; \C)$ as in \cite{Sher22Iwasawa}.
%Specifically, the Cartan decomposition $\gk = \kk \oplus \pk$ is given by an involution $\theta$ where $\kk$ is the fixed point subalgebra and $\pk$ is the $(-1)$-eigenspace of $\theta$. 
Specifically, an involution $\theta$ on $\gk$ gives rise to the Cartan decomposition $\gk = \kk \oplus \pk$ where $\kk$ is the fixed point subalgebra and $\pk$ is the $(-1)$-eigenspace of $\theta$. 
We fix a $\theta$-invariant nondegenerate invariant form $b$ on $\gk$. 
Let $\ak \subseteq \pk\ev$ be a maximal toral subalgebra. 
Let $\Sigma := \Sigma(\gk, \ak)$ be the restricted root system of $\gk$ with respect to $\ak$, and $\Sigma^+$ be a positive system. 
We denote the form on $\ak^*$ induced from $b$ by $(\cdot, \cdot)$.
%As in Subsection~\ref{subsec:ZHC}, 
We say a restricted root $\alpha$ is anisotropic if $(\alpha, \alpha)\neq 0$, and isotropic otherwise. We write $\gk_\alpha$ for the root space of $\alpha$, and let
\[
\Sigma\ev := \{\alpha \in \Sigma : \gk _\alpha \cap \gk\ev \neq \{0\} \}, \quad \Sigma\od := \{\alpha \in \Sigma : \gk _\alpha \cap \gk\od \neq \{0\} \}
\]
be the sets of \textit{even} and \textit{odd} restricted roots respectively. Set $\Sigma^+\ev = \Sigma^+\cap \Sigma\ev$.
If $\alpha \in \Sigma$, but $\alpha/2 \notin \Sigma$, we say $\alpha$ is \textit{indivisible}.
We define the \textit{multiplicity} of $\alpha$ as
$m_\alpha := \dim (\gk_\alpha)\ev -\dim (\gk_\alpha)\od$.
For a positive system $\Sigma^+$, we define the \textit{nilpotent subalgebra} for $\Sigma^+$ as $\nk := \bigoplus_{\alpha\in\Sigma^+}\gk_\alpha$. 
We denote the \textit{restricted Weyl vector} 
$\frac{1}{2}\sum_{\alpha \in \Sigma^+} m_\alpha \alpha$
for $\Sigma^+$ as $\rho_\ak$.

An \textit{Iwasawa decomposition} for the pair $(\gk, \kk)$ is the decomposition
\begin{equation} \label{eqn:nak}
    \gk = \nk \oplus \ak \oplus \kk.
\end{equation} 
For an in-depth discussion and recent developments on Iwasawa decomposition, see \cite{Sher22Iwasawa}. 

%We now define the Harish-Chandra isomorphism associated with $(\gk, \kk)$ with the help of the Iwasawa decomposition.

The Poincar\'e--Birkhoff--Witt theorem applied to Eq.~(\ref{eqn:nak}) yields the following identity:
\[
\Uk = \left( \nk\Uk + \Uk\kk\right ) \oplus \Sk(\ak).
\]
Let $p$ be the corresponding projection onto $\Sk(\ak) \cong \Pk(\ak^*)$,
and define $\hcHomoGK(D)(\lambda) := p(D)(\lambda+\rho_\ak)$ on $\mathfrak{U}^\kk$. The map $\hcHomoGK$ is called the \textit{Harish-Chandra projection associated with} $(\gk, \kk)$.
The \textit{Harish-Chandra homomorphism associated with $(\gk, \kk)$} is the quotient map
\[
\hcIsoGK: \quoUk\rightarrow \Ima \hcHomoGK.
\]
In Section~\ref{sec:HCIsos}, we will show that $\ker \hcHomoGK = (\Uk\kk)^\kk$ so $\hcIsoGK$ is an isomorphism, and determine $\Ima \hcIsoGK = \Ima \hcHomoGK$. 
%Next, we introduce Alldridge's result on spherical representations. 
%Suppose $\hk \supseteq \ak$ is a $\theta$-invariant Cartan subalgebra extended from $\ak$. If $\bk\subseteq \gk$ is a Borel subalgebra containing $\hk$, then we denote the irreducible representation of highest weight $\lambda \in \hk^*$ with respect to $\bk$ as $V(\lambda, \bk)$.

\subsection{The Harish-Chandra homomorphism %\texorpdfstring{$\hcZIso$}{gamma} 
for \texorpdfstring{$\Zk( \gl(m|n))$}{Z(gl(m|n))} } \label{subsec:ZHC}
Let $\gk = \gl(m|n) = \gl(m|n; \C)$ and let $\tk = \Spn\{E_{i, i}: 1\leq i \leq m+n\}$ be the standard diagonal Cartan subalgebra, where $E_{i, j}$ denotes the matrix with 1 in the $(i, j)$-th entry and 0 elsewhere. %This $\tk$ is the standard diagonal Cartan subalgebra of $\gl(m|n)$. 
% and $\tk$ be the standard Cartan subalgebra. 
We denote the root system of $\gk$ with respect to $\tk$ as $\Sigma(\gk, \tk)$,
and choose a positive system $\Sigma^+(\gk, \tk)$ in $\Sigma(\gk, \tk)$. 
Let $\Nk^+$ and $\Nk^-$ be the sums of positive  and negative root spaces, respectively. 
%Then $\gk = \Nk^- \oplus \tk \oplus \Nk^+$ is the triangular decomposition of $\gk$.
We may then write $\gk = \Nk^- \oplus \tk \oplus \Nk^+$, the triangular decomposition of $\gk$.								   
We denote the \textit{Weyl vector} 
$\frac{1}{2}\sum_{\alpha \in \Sigma^+(\gk, \tk)} m_\alpha \alpha$ for $\Sigma^+(\gk, \tk)$, as $\rho_\tk$, where $m_\alpha$ is the multiplicity of $\alpha \in \Sigma^+(\gk, \tk)$.
%\fixIt{For a finite dimensional vector space $V$, the polynomial algebra on $V$, denoted as $\Pk(V)$, is defined to be the symmetric algebra $\Sk(V^*)$ on the dual of $V$. }

We denote the universal enveloping algebra $\Uk(\gk)$ as $\Uk$, and the center of $\Uk$ as $\Zk$. Following \cite{Hum78, musson2012lie}, by the Poincar\'e--Birkhoff--Witt theorem, we have 
\begin{equation} \label{eqn:bigTriUDec}
\Uk = \Sk(\tk)\oplus (\Nk^-\Uk+\Uk\Nk^+)
\end{equation}
Let $\xi$ be the corresponding projection onto $\Sk(\tk)$. 
We identify $\Sk(\tk)$ with the polynomial algebra $\Pk(\tk^*)$, and define an automorphism $\eta$ by 
$\eta(p)(\mu+\rho_\tk)=p(\mu)$ for all $p$ in  $\Pk(\tk^*)$ and $\mu \in \tk^*$.
The \textit{Harish-Chandra homomorphism} is then defined as
\[
\hcZIso_\tk := \eta\circ\xi |_\Zk.
\]
Denote by $\chi_\mu$ the central character afforded by a $\gk$-module of highest weight $\mu \in \tk^*$, then we have
\begin{equation} 
    \chi_\mu(z)= \xi(z) = \hcZIso_\tk(z) (\mu+\rho_\tk).
\end{equation}

To describe the image of $\hcZIso_\tk$, we first introduce the supersymmetric polynomials.
Let $\{x_i\}:=\{x_i\}_{i = 1}^m$ and $\{y_j\}:=\{y_j\}_{j=1}^n$ be two sets of independent variables, and set $\{x_i, y_j\} := \{x_i\}\cup \{y_j\}$. We write $f(x_1 = t, y_1 = -t)$ for the polynomial obtained by the substitutions $x_1 = t$ and $y_1 = -t$. A polynomial $f$ in $\{x_i, y_j\}$ is said to be \textit{supersymmetric} if
\begin{enumerate}
    \item $f$ is invariant under permutations of $\{x_i\}$ and of $\{y_j\}$ separately. 
    \item $f(x_1 = t, y_1 = -t)$ is independent of $t$.
\end{enumerate}
Let $I_\C(x_i, y_j)$ denote the $\C$-algebra of supersymmetric polynomials in $\{x_i, y_j\}$.
By \cite[Theorem~1]{Stembridge85supsym} (c.f. \cite[Theorem~12.4.1]{musson2012lie}), $I_\C(x_i, y_j)$ is generated by the power sums:
\begin{equation} \label{eqn:power}
    p_r^{(m, n)}(x_{i}, y_{j}) := \sum_{i = 1}^m x_i^r -(-1)^r \sum_{j = 1}^n y_j^r,\; r\in \mathbb{Z}_{>0}.
\end{equation}
We record the following standard result about $\hcZIso_\tk$ where we identify $\{x_i, y_j\}$ as standard basis for $\tk$. Thus $x_i = E_{i, i}$ for $1\leq i \leq m$, and $y_j = E_{m+j, m+j}$ for $1\leq j \leq n$.
We refer to \cite[Theorem~13.1.1, Theorem~13.4.1]{musson2012lie}, which are based on the original works by Kac, Serganova, and Gorelik \cite{Kac84HC, Ser99HC, Gor04HC}.
\begin{thm} \label{cite:ImhcZIso}
    The homomorphism $\hcZIso_\tk$ is an isomorphism and $\Ima \hcZIso_\tk = I_\C(x_i, y_j)$.
\end{thm}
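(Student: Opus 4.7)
The plan is to establish three facts about $\hcZIso_\tk$: (i) it is well-defined into $\Pk(\tk^*)$, (ii) its image lands in the supersymmetric subalgebra $I_\C(x_i, y_j)$, and (iii) it is surjective onto this subalgebra and injective. Items (i)--(ii) are the ``constraints'' side while (iii) is where the real work happens.

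For (i)--(ii), first recall that for any $\lambda \in \tk^*$ the central character $\chi_\lambda$ of the Verma module $M(\lambda)$ equals $\hcZIso_\tk(z)(\lambda)$ after the $\eta$-shift, so proving that $\hcZIso_\tk(z)$ is supersymmetric reduces to understanding when two Verma modules share the same central character. The $S_m \times S_n$ symmetry in $\{x_i\}$ and $\{y_j\}$ comes from the action of the even Weyl group, which preserves central characters by the usual Harish-Chandra argument (using $\sltwo$-strings along even roots and the fact that a polynomial agreeing with a Weyl-twisted polynomial on a Zariski dense set of dominant integral weights must be $W$-invariant). The supersymmetry condition (the second clause, invariance under the substitution $x_1 = t$, $y_1 = -t$) comes from the \emph{odd reflection} / isotropic root linkage: for the isotropic root $\alpha = \epu_1 - \dau_1$, one has $\chi_\lambda = \chi_{\lambda - k\alpha}$ whenever $(\lambda + \rho_\tk, \alpha) = 0$, and translating this through $\eta$ yields exactly the Stembridge supersymmetry constraint on the image.

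For the surjectivity in (iii), by Stembridge's theorem it suffices to realize each power sum $p_r^{(m,n)}$ in the image. The natural candidates are the supertrace elements
\[
c_r := \str(E^r) = \sum_{i_1, \ldots, i_r} (-1)^{|i_1| + \cdots + |i_r|} E_{i_1 i_2} E_{i_2 i_3} \cdots E_{i_r i_1},
\]
or, to be safer, Gelfand-type variants built from them. A direct PBW calculation, pushing all $\Nk^+$-factors to the right and picking up commutators that land in $\tk + \Nk^- \Uk + \Uk \Nk^+$, shows that $\xi(c_r) = p_r^{(m,n)}(E_{1,1}, \ldots, E_{m+n, m+n}) + (\text{lower degree})$; applying $\eta$ preserves the top-degree piece. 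By induction on degree, any supersymmetric polynomial is then in the image of $\hcZIso_\tk$.

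For the injectivity in (iii), if $\hcZIso_\tk(z) = 0$ then $z$ annihilates every Verma module and thus every highest weight module; combined with the PBW decomposition in Eq.~(\ref{eqn:bigTriUDec}) and the fact that the projection $\xi$ is injective on $\Zk$ (a central element with zero $\tk$-component would, by central-ness, have to be zero), this forces $z = 0$. The main obstacle is the supersymmetry clause of (ii): for ordinary Lie algebras the Weyl-invariance of Harish-Chandra images is almost immediate, but here the isotropic-root linkage is a genuinely super-geometric phenomenon and must be extracted carefully from the existence of nontrivial homomorphisms between Verma modules along isotropic directions. This is precisely the content of the Kac--Serganova--Gorelik analysis that the paper cites, and in our write-up we would invoke those references rather than re-derive the linkage principle from scratch.
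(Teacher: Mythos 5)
The paper does not prove this theorem; it quotes it from Musson's textbook (Theorems~13.1.1 and 13.4.1 of \cite{musson2012lie}), which in turn rests on the work of Kac, Serganova, and Gorelik. So there is no in-paper argument to compare against, only a citation. Your sketch is a reasonable reconstruction of the standard argument underlying that citation: the $S_m \times S_n$-invariance from the even Weyl group acting on central characters, the translation invariance along isotropic roots from the linkage $\chi_\lambda = \chi_{\lambda - \alpha}$ when $(\lambda + \rho_\tk, \alpha) = 0$, and surjectivity from the supertrace Gelfand invariants $\str(E^r)$, whose Harish-Chandra images hit Stembridge's power-sum generators in top degree so that a filtration induction closes the loop. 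All of that is sound.

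The one genuine gap is the injectivity step, which as written is circular. You assert that ``the projection $\xi$ is injective on $\Zk$ (a central element with zero $\tk$-component would, by central-ness, have to be zero)'' and deduce $z = 0$ from it. But since $\eta$ is an automorphism of $\Sk(\tk)$, the injectivity of $\xi|_\Zk$ \emph{is} the injectivity of $\hcZIso_\tk$: there is nothing to ``combine.'' The parenthetical is not an argument. Centrality alone does not obviously force this, and indeed $\xi$ is far from injective on the larger algebra $\Uk^\tk$ of $\ad\tk$-invariants (already for $\sltwo$ one has $fe \in \Uk^\tk$ with $\xi(fe)=0$ but $fe \neq 0$), so the mechanism by which centrality rescues injectivity is precisely what needs to be supplied. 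The standard proofs either pass to the associated graded and invoke a (super) Chevalley restriction theorem via the symmetrization map, or establish that the family of all Verma modules is jointly faithful on $\Zk$. For $\gl(m|n)$ this is part of what the cited Kac--Serganova--Gorelik theory provides. You are right to plan on citing those sources for the isotropic linkage principle; injectivity should be cited in the same breath rather than waved through as a formal consequence of $z$ being central.
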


We also use the notation $\Lambda(\tk^*)$ for the algebra of supersymmetric polynomials on $\tk^*$ when we suppress the choice of coordinates.
Thus, we have
\begin{equation} \label{eqn:ZHCiso}
    \Ima \hcZIso_\tk = \Lambda(\tk^*), \quad \Zk \xrightarrow[\sim]{\hcZIso} \Lambda(\tk^*).
\end{equation}

\begin{rmk}
    The above result is true in a more general setting. In particular, Eq.~(\ref{eqn:ZHCiso}) is true when \cite[Hypothesis~8.3.4]{musson2012lie} is assumed, that is, when $\gk$ is $\gl$, or basic simple, excluding type $A(m,n)$.
\end{rmk}

\subsection{Cheng--Wang decomposition} \label{subsec:CW}
Let $\kk = \gl(m|n)\oplus \gl(m|n)$.
We recall a multiplicity-free $\kk$-module decomposition, known as Howe duality in \cite{CW2000HDfL}, which generalizes Schmid's decomposition \cite{Schmid69, FK90}. 
It will allow us to define the supersymmetric Shimura operators in Section~\ref{sec:setUp}.

We first introduce some notation. A \textit{partition} $\lambda$ is a sequence of non-negative integers $(\lambda_1, \lambda_2, \dots)$ with only finitely many non-zero terms and $\lambda_i\geq \lambda_{i+1}$ (c.f. \cite{MacdSymFun}). 
Let $|\lambda| := \sum_{i}\lambda_i$ denote the \textit{size} of $\lambda$, $\ell(\lambda) := \max\{i: \lambda_i>0\}$ the \textit{length} of $\lambda$,
and $\lambda'$ for which $\lambda'_i := |\{j: \lambda_j\geq i\}|$ the \textit{transpose} of $\lambda$. When viewed as the corresponding Young diagram, $\lambda$ is the collection of ``boxes" $(i, j)$
\[
\{(i, j): 1\leq i \leq \ell(\lambda), 1\leq j \leq \lambda_i\}.
\]
A $(m, n)$\textit{-hook partition} is a partition $\lambda$ such that $\lambda_{m+1}\leq n$. We define
\[
\cH(m, n) := \left \{\lambda: \lambda_{m+1}\leq n \right \}, \quad  \cH^d(m, n) := \left \{\lambda\in \cH(m, n):|\lambda|=d \right \}.
\]
For $\lambda \in \cH(m, n)$, we define a $(m+n)$-tuple
\begin{equation}\label{eqn:lNat}
    \lambda^\natural := \left(\lambda_1, \dots, \lambda_m,  \left \langle \lambda_1'-m \right \rangle, \dots, \left \langle \lambda_n'-m \right \rangle \right) 
\end{equation}
where $\langle x \rangle := \max \{x, 0\}$ for $x\in \mathbb{Z}$.
The last $n$ coordinates can be viewed as the lengths of the remaining columns after discarding the first $m$ rows of $\lambda$.

%Any Borel subalgebra $\mathfrak{B}$ of $\gl(m|n)$ containing a Cartan subalgebra $\mathfrak{H}\subseteq \gl(m|n)\ev$ can be described by an \textit{$\epsilon\delta$-chain}, $[X_1\cdots X_{m+n}]$, which is a sequence consisting of characters $X_i \in \mathfrak{H}^*$ 
%where \fixIt{the consecutive differences $X_i-X_{i+1}$} exhaust all the simple roots defining $\mathfrak{B}$. 
We let $\epu_i$ and $\dau_j$ be the coordinate of $E_{i, i}$ for $1\leq i \leq m$ and of $E_{m+j, m+j}$ for $1\leq j \leq n$ respectively. By an \textit{$\epsilon\delta$-chain}, we mean a rearrangement  $[X_1\cdots X_{m+n}]$ of $\epu_i$'s and $\dau_j$'s. The consecutive differences $X_i-X_{i+1}$ give a positive system in $\Sigma^+(\gl(m|n), \tk)$ which defines a Borel subalgebra $\mathfrak{B} \supseteq \tk$. 
For instance, $[\epu_1\cdots\epu_m\dau_1\cdots\dau_n]$
gives the standard Borel subalgebra $\bk^{\std}$, while $[\dau_n\cdots\dau_1\epu_m\cdots\epu_1]$ gives the opposite one, denoted as $\bk^{\opp}$. 
For an $(m+n)$-tuple $(a_1, \dots, a_m, b_1, \dots, b_n)$, we associate an irreducible $\gl(m|n)$-module of highest weight $\sum_{i=1}^m a_i\epu_i + \sum_{j = 1}^n b_j \dau_j$ with respect to $\bk^{\std}$, denoted as $L(a_1, \dots, a_m, b_1, \dots, b_n)$. 

%Let $V$ be a vector superspace and let $\Sk(V) = \Sk^0(V) \oplus \Sk^0(V) \oplus \cdots$ be the supersymmetric algebra on $V$. Note that $\Sk^d(V) = \bigoplus_{i+j = d}\Sk^i(V\ev)\otimes \bigwedge^j(V\od)$ as vector spaces.
The natural action of $\gl(m|n)$ on $\C^{m|n}$ gives an action of $\kk = \gl(m|n)\oplus \gl(m|n)$ on $\C^{m|n}\otimes \C^{m|n}$, which extends to an action on $\Sk(\C^{m|n}\otimes \C^{m|n})$. We record the following result regarding the $\kk$-module structure on $\Sk(\C^{m|n}\otimes \C^{m|n})$. See \cite[Theorem~3.2]{CW2000HDfL}.
\begin{thm} \label{cite:CW}
    The supersymmetric algebra $\Sk(\C^{m|n}\otimes \C^{m|n})$ as a $\kk$-module is completely reducible and multiplicity-free. In particular,
    \[
    \Sk^d(\C^{m|n}\otimes \C^{m|n}) = \bigoplus_{\lambda\in \cH^d(m, n)} L(\lambda^\natural)\otimes L(\lambda^\natural).
    \]
\end{thm}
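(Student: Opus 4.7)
The plan is to prove the statement in three stages: a character computation via a supersymmetric Cauchy identity, identification of the summands through the Berele--Regev hook Schur formula, and a highest-weight argument that upgrades the character identity to an actual $\kk$-module decomposition. The key enabling features are that the grading $\Sk(W) = \bigoplus_d \Sk^d(W)$ with $W := \C^{m|n}\otimes\C^{m|n}$ is $\kk$-stable with finite-dimensional graded pieces, and that the two factors of $\kk=\gl(m|n)\oplus\gl(m|n)$ act by commuting derivations on $\Sk(W)$.

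First, I would split $W$ into even and odd parts $W\ev = (\C^m\otimes\C^m)\oplus(\C^n\otimes\C^n)$ and $W\od = (\C^m\otimes\C^n)\oplus(\C^n\otimes\C^m)$ and compute the character of $\Sk(W)$ with respect to the two standard Cartan subalgebras of $\kk$:
\[
\prod_{i,i'}(1-x_i x'_{i'})^{-1}\prod_{j,j'}(1-y_j y'_{j'})^{-1}\prod_{i,j'}(1+x_i y'_{j'})\prod_{i',j}(1+x'_{i'} y_j).
\]
By the supersymmetric Cauchy identity, this product expands as $\sum_{\lambda} hs_\lambda(x;y)\,hs_\lambda(x';y')$, where $hs_\lambda$ is the hook Schur function; the vanishing property of $hs_\lambda$ restricts the sum to $\lambda\in\cH(m,n)$.

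Second, I would invoke the Berele--Regev theorem, which identifies $hs_\lambda(x;y)$ with the character of the irreducible polynomial $\gl(m|n)$-module $L(\lambda^\natural)$ of highest weight given by Eq. (\ref{eqn:lNat}) relative to $\bk^{\std}$. Combining these two steps shows that the graded bi-character of $\Sk^d(W)$ matches that of $\bigoplus_{\lambda\in\cH^d(m,n)} L(\lambda^\natural)\otimes L(\lambda^\natural)$.

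To produce the actual module isomorphism, for each $\lambda\in\cH^d(m,n)$ I would construct an explicit highest weight vector in $\Sk^d(W)$ of bi-weight $(\lambda^\natural,\lambda^\natural)$ with respect to $\bk^{\std}\oplus\bk^{\std}$, built from products of standard even matrix minors and suitable wedge products in the odd coordinates on $W$, so that the subrepresentation it generates is isomorphic to $L(\lambda^\natural)\otimes L(\lambda^\natural)$. Summing these yields an injection from the right-hand side into $\Sk^d(W)$, and the character match forces the injection to be an isomorphism. The main obstacle is precisely this last step: since $\kk$ is not reductive, semisimplicity of $\Sk^d(W)$ is not automatic and one must rule out nontrivial self-extensions among the $L(\lambda^\natural)\otimes L(\lambda^\natural)$. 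The character count provides the required upper bound on Jordan--H\"older multiplicities, while the explicit highest weight vectors provide the matching lower bound and guarantee the sum is direct rather than merely filtered.
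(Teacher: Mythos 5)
The paper does not supply its own proof of this statement; it cites Cheng--Wang \cite[Theorem~3.2]{CW2000HDfL} verbatim, so the comparison must be against the original Cheng--Wang argument. Your overall strategy---the supersymmetric Cauchy identity, the Berele--Regev identification of $hs_\lambda$ with the character of $L(\lambda^\natural)$, and the construction of explicit joint highest weight vectors---is indeed the skeleton of the argument in that reference, and the first two steps are carried out correctly.

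However, there is a real gap in your final step. Having a highest weight vector $v_\lambda$ of bi-weight $(\lambda^\natural,\lambda^\natural)$ for every $\lambda\in\cH^d(m,n)$, together with the character computation, does not by itself force each cyclic submodule $\Uk(\kk)\cdot v_\lambda$ to be irreducible, nor does it force their sum to be direct. A cyclic highest weight module can have composition factors of strictly smaller highest weight, and it is entirely consistent with the Jordan--H\"older count that $\Uk(\kk)\cdot v_\lambda$ absorbs $L(\mu^\natural)\otimes L(\mu^\natural)$ for some $\mu\neq\lambda$ as a subquotient while the sum of the $\Uk(\kk)\cdot v_\lambda$ fails to be direct; ``matching lower bound'' does not yield ``direct.'' What is missing is complete reducibility of $\Sk^d(W)$, which must be invoked rather than deduced from the highest weight vectors. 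The standard route, and the one underpinning Cheng--Wang, is that $\Sk^d(W)$ is a $\kk$-module quotient of $W^{\otimes d}=(\C^{m|n}\otimes\C^{m|n})^{\otimes d}$, a tensor power of the natural representation of $\kk=\gl(m|n)\oplus\gl(m|n)$; by Schur--Sergeev duality such tensor powers (and hence all their subquotients) are completely reducible. Once this semisimplicity is on the table, the character identity alone pins down the decomposition, and the explicit highest weight vectors become a welcome explicit witness rather than the load-bearing ingredient.
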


\subsection{Supersymmetric Polynomials} \label{subsec:SVBC} 
We devote this subsection to an overview of known results of supersymmetric polynomials, including the Type \textit{BC} interpolation polynomials introduced and studied by Sergeev and Veselov in \cite{SV09}. These are super analog for the now-classic Okounkov polynomials \cite{Okounkov98, OO06}.
We specialize them to $J_\mu$ (as in Theorem~\ref{thm:A}) in Section~\ref{sec:BCPoly}.

% Throughout this subsection we set indices $j=1, \dots, m, i = 1\dots, n$.
Let $ \{ e_j  \} \cup  \{  d_i  \}$ be the standard basis for $\C^{m+n}$, and $w_j$ and $z_i$ be the coordinate functions of $e_j$ and $d_i$, for $j=1, \dots, m, i = 1\dots, n$. 
Let $\mathsf{k}$ and $\mathsf{h}$ be two parameters. Following \cite{SV09}, we assume that $\mathsf{k} \notin \mathbb{Q}_{>0}$ (called \textit{generic}), and set $\hat{\fh}$ by $2\hat{\fh}-1 = \fk^{-1}(2\fh-1)$. 
We also define
\begin{align*}
    C_\lambda^+(x; \fk) &:= \prod_{(i, j)\in \lambda} \left(\lambda_i+j+ \fk(\lambda'_j+i)+x\right), \\
    C_\lambda^-(x; \fk) &:= \prod_{(i, j)\in \lambda} \left(\lambda_i-j- \fk(\lambda'_j-i)+x\right).
\end{align*}
As in \cite[Section~6]{SV09}, we let $\rho := \sum\rho_{1j} e_j + \sum \rho_{2i} d_i$ where
\begin{equation} \label{eqn:SVrho}
    \rho_{1j} := -(\mathsf{h}+\mathsf{k}j+n), \; \rho_{2i} :=  -\mathsf{k}^{-1}\left(\mathsf{h}+\frac{1}{2}\mathsf{k}-\frac{1}{2}+i\right).
\end{equation}
This is the \textit{deformed Weyl vector} calculated with the deformed root multiplicities in \cite{SV09}.
We identify $P_{m, n} := \C[w_j, z_i]$ with $\Pk(\C^{m+n})$, and define $\Lambda^{(\fk, \fh)}_{m, n} \subseteq P_{m, n}$ as the subalgebra of polynomials $f$ which: (1) are symmetric separately in shifted variables $(w_j-\rho_{1j})$ and $(z_i-\rho_{2i})$, and invariant under their sign changes; (2) satisfy the condition 
\[
f(X-d_i+e_j) = f(X)
\]
on the hyperplane $w_j+\mathsf{k}(j-1) = \mathsf{k}z_i+i-n$.
If we equip $\C^{m+n}$ with an inner product defined by 
\[
(e_i, e_{k}) = \delta_{i, k}, \; (d_j, d_l) = \mathsf{k}\delta_{j, l}, \; (e_j, d_i) = 0,
\]
then Condition (2) is equivalent to: (2') $f(X+\alpha)=f(X)$ for any $\alpha = d_i \pm e_j$ on the hyperplane
    \[
    (X-\rho, \alpha) + \frac{1}{2}(\alpha, \alpha)=0.
    \]
For $\lambda\in \cH(m, n)$, we set $w(\lambda) = (\lambda'_1, \dots, \lambda'_n)$ and $z(\lambda) = (\left \langle \lambda_1-n \right \rangle, \dots, \left \langle \lambda_m-n \right \rangle)$. Equivalently, $(z(\lambda), w(\lambda)) = (\lambda')^\natural$ (see Eq.~(\ref{eqn:lNat})).
% We now present Sergeev and Veselov's Type \textit{BC} interpolation polynomials.
% The word interpolation refers to the property that these polynomials can be uniquely determined by their zeros, also known as the \textit{vanishing properties}. 
%See \cite{SV09} for details.
Proposition~6.3 in \cite{SV09} says the following.
\begin{thm}\label{cite:BCPoly}
For each $\mu \in \cH(m, n)$, there exists a unique polynomial $I^{\textup{SV}}_\mu(X; \mathsf{k, h}) \in \Lambda^{(\fk, \fh)}_{n, m}$ of degree $2|\mu|$ such that
\[
I^{\textup{SV}}_\mu(z(\lambda), w(\lambda); \mathsf{k, h}) = 0, \quad \text{for all } \lambda \nsupseteq \mu,\, \lambda \in \cH(m, n).
\]
and satisfies the normalization condition $I^{\textup{SV}}_\mu(z(\mu), w(\mu); \mathsf{k, h}) = C_\mu^-(1; \fk)C_\mu^+(2\fh - 1; \fk)$.
Moreover, $\{I^{\textup{SV}}_\mu\}$ is a basis for $\Lambda^{(\fk, \fh)}_{n, m}$.
\end{thm}
% \begin{rmk}
%     In \cite{SV09}, the formulation depends on a particular choice of positivity of the deformed \textit{BC} root system. It is discussed in great detail how to obtain the proper formulation for our purpose here in \cite[Appendix~A]{Z22} by the second author (see Section~\ref{sec:BCPoly}).
% \end{rmk}

In the same work \cite{SV09}, a closely related polynomial is introduced
\begin{equation} \label{eqn:SVhat}
    I^{\textup{SV}}_\mu(z_i, w_j; \fk, \fh) := d_\mu \hat{I}_{\mu'}(z_i, w_j; \fk^{-1}, \hat{\fh}),
\end{equation}
for
\begin{equation}\label{eqn:dlam}
    d_\mu := (-1)^{|\mu|}\fk^{2|\mu|}\frac{C_\mu^-(1; \fk)}{C_\mu^-(-\fk; \fk)}.
\end{equation}
A tableau formula is provided in \cite[Proposition~6.4]{SV09} as follows.
\begin{thm} \label{cite:SVtab}
For $\lambda \in \cH(m, n)$, we have
    \begin{equation}\label{eqn:Ihat}
    \hat{I}_\lambda(z_i, w_j; \fk, \fh) = \sum_{T}\varphi_T(-\fk)\prod_{(i, j)\in \lambda} f_{T(i, j)}.
\end{equation}
\end{thm}

Here $T$ is any reverse bitableau of type $(n, m)$ and shape $\lambda \in \cH(m, n)$, with a filling by symbols $1<\cdots < n < 1'<\cdots < m'$ (see \cite[Section~6]{SV09}).
The weight $\varphi_T$ is defined as in \cite[Eq.~(41)]{SV05SuperJack}, and $f_{T(i, j)}$ has leading term $z_a^2$ if $T(i,j) = a$ for $a= 1, \dots, n$ and leading term $\fk^2 w_b^2$ if $ T(i,j) = b'$ for $b'= 1', \dots, m'$.
In fact, \cite{SV09} uses Eq.~(\ref{eqn:Ihat}) as the definition for $\hat{I}_\lambda$ and Eq.~(\ref{eqn:SVhat}) as a proposition.
Here we present the tableau formula as a theorem in parallel with the following result regarding super Jack polynomials.

We recall the theory of super Jack polynomials from \cite{SV05SuperJack}. The (monic) Jack symmetric functions  $P_\lambda(x; \theta)$ are a linear basis for the ring $\Lambda$ of symmetric functions, and the power sums $p_r$ are free generators of $\Lambda$, see e.g. \cite{MacdSymFun}.
Let $\phi_\theta$ be the homomorphism from $\Lambda$ to the polynomial ring $\C[x_i,y_j]$ which is defined on the generators as follows
\[
\phi_\theta(p_r)= p^{(m, n)}_{r, \theta} (x_i, y_j) := \sum_{i=1}^m x_i^r-\frac{1}{\theta}  \sum_{j=1}^n y_j^r.
\]
The super Jack polynomial $SP_{\lambda}$ is defined by
\[
SP_{\lambda}=SP_{\lambda}(x_i, y_j; \theta) := \phi_\theta(P_\lambda(x; \theta)).
\]

\begin{thm} \label{cite:SVThm11}
We have
\begin{equation} 
    SP_\lambda(x_i, y_j; \theta) = \sum_{T}\varphi_T(\theta)\prod_{(i, j)\in \lambda} x_{T(i, j)}
\end{equation}
where $x_{j'}$ is $y_j$ for $j=1, \dots, n$ and for $\varphi_T$ defined as in \cite[Eq.~(41)]{SV05SuperJack}.
\end{thm}

%Now it is clear that the top homogeneous degree of $\hat{I}_\mu(x_i, y_j; \fk, \fh)$ is the same as $SP_\mu(x_i^2, \fk^2y_j^2; -\fk)$.

We also record the following expansion (\cite{Yan92, Stanley89}) as in \cite{SZ2019PoSo} (note $\tau$ is $\theta$ here)
\begin{equation} 
    \frac{1}{k!}\left(\sum_{i}x_i^2\right)^{k} = \sum_{|\mu|=k} C_\mu^-(1; -\theta) P_\mu(x_i^2; \theta).
\end{equation}
Applying $\phi_\theta$ on both sides, we have
\begin{equation} \label{eqn:expJack}
    \frac{1}{k!}\left(p_{2, \theta}^{(m, n)}(x_i, y_j; \theta)\right)^{k} = \sum_{\mu\in \cH^k(m, n)} C_\mu^-(1; -\theta) SP_\mu(x_i^2, y_j^2; \theta).
\end{equation}

%==================================================================================================
\section{Supersymmetric Shimura Operators}
\label{sec:setUp}
From now on, we fix $\gk = \gl(2p|2q)$ and $\kk =  \gl(p|q)\oplus \gl(p|q)$. This section is devoted to the description of the pair $(\gk, \kk)$ and the important subspaces therein.
We then define the supersymmetric Shimura operators and specialize related results introduced above. 
Throughout the section we set indices $i=1, \dots, p, j = 1\dots, q$, and $k$ is either $i$ or $j$ depending on the context.

\subsection{Realization} \label{subsec:realize}
We fix the following embedding of $\kk$ into $\gk$, and identify $\kk$ with its image
\begin{equation}\label{eqn:emb}
\left( \left( \begin{array}{c|c}A_{p \times p} & B_{p \times q}\\ \hline C_{q \times p} & D_{q \times q}\end{array}\right ),\left( \begin{array}{c|c}A_{p \times p}' & B_{p \times q}'\\ \hline C_{q \times p}' & D_{q \times q}'\end{array} \right ) \right ) 
\mapsto 
\left( \begin{array}{c c |c c } A_{p \times p} & 0_{p \times p} & B_{p \times q} & 0_{p \times q} \\0_{p \times p} & A_{p \times p}' & 0_{p \times q} & B_{p\times q}' \\ \hline C_{q \times p} & 0_{q \times p} & D_{q \times q} & 0_{q \times q} \\ 0_{q \times p} & C_{q \times p}' & 0_{q \times q} & D_{q \times q}'  \end{array}\right ).
\end{equation}
We let $J := \frac{1}{2}\diag(I_{p \times p}, -I_{p \times p}, I_{q \times q}, -I_{q \times q})$, and $\theta := \Ad \exp(i\pi J)$. 
Then $\theta$ has fixed point subalgebra $\kk \cong \gl(p|q)\oplus \gl(p|q)$. 
We also have the \textit{Harish-Chandra decomposition}
\[
\gk = \pminus \oplus \kk \oplus \pplus
\]
where $\mathfrak{p}^+$ (respectively $\mathfrak{p}^-$) consists of matrices with non-zero entries only in the upper right (respectively bottom left) sub-blocks in each of the four blocks.
%, that is, non-zero entries of matrices in $\pplus$ (respectively $\pminus$) are right to (respectively below) $X$ and above (respectively left to) $X'$ for $X = A, B, C, D$ in Eq.~(\ref{eqn:emb}), omitting indices. 
Set $\pk := \pminus\oplus \pplus$, so $\gk = \kk \oplus \pk$.

In our theory, we need to work with a $\theta$-stable Cartan subalgebra $\hk$ of $\gk$ cotaining a Cartan subspace $\ak \subseteq \pk\ev$. 
%Note in \cite[Section~4]{AHZ2010Crtf}, it is shown that $(\gl(r+p|s+q), \gl(r|s)\oplus\gl(p|q))$ is of even type if and only if $(r-p)(s-q)\geq 0$, satisfied by our pair with $p = r, q = s$.
We present a construction of $\hk$ and $\ak$ using a certain Cayley transform as follows. 

We let $\epu_i^+:= \epu_i$, $\epu_i^-:=\epu_{p+i}$, $\dau_j^+:=\dau_j$, and $\dau_j^-:=\dau_{q+j}$
be the characters on $\tk$. 
Let
\begin{equation} \label{eqn:gammaBF}
    \gamma_i\bos := \epu_i^+-\epu_i^-, \quad \gamma_j\fer:= \dau_j^+-\dau_j^-.
\end{equation}
These are the \textit{Harish-Chandra strongly orthogonal roots}, and we denote the set of $\gamma_k\bnf$ \footnote{Here $\textsc{B}$ indicates the Boson--Boson block (top left) and $\textsc{F}$ the Fermion--Fermion block (bottom right), c.f. \cite{A2012}.} as $\HCso$.
We set $A_{i, i'} := E_{i, i'}$ for $1 \leq i, i' \leq 2p$ and $D_{j, j'} := E_{2p+j, 2p+j'}$ for $1 \leq j, j' \leq 2q$.
%We set $D_{j, j'} := E_{2p+j, 2p+j'}$ for $1 \leq j, j' \leq 2q$.
Associated with each $\gamma\bos_i$ is an $\sltwo$-triple spanned by $A_{i, i}-A_{p+i, p+i}, A_{i, p+i}$ and $A_{p+i, i}$ (similarly for $\gamma\fer_j$ with $D_{j, j'}$). It is not hard to see that all $(p+q)$ $\sltwo$-triples commute. 
We write $\ii$ for the imaginary unit $\sqrt{-1}$ to avoid confusion.
We also define 
\[
c\bos_i := \Ad \exp\left( {\frac{\pi}{4}}\ii (-A_{i, p+i} -A_{p+i, i} )\right), \quad c\fer_j := \Ad \exp\left( {\frac{\pi}{4}}\ii (-D_{j, q+j} -D_{q+j, j} )\right).
\]
The product
\begin{equation} \label{eqn:Cayley}
    c := \prod_i c\bos_i \prod_j c\fer_j 
\end{equation}
is thus a well-defined automorphism on $\gk$ as all terms commute. 
We set 
\begin{alignat*}{5}
    x_i &:= \ii (A_{i,p+i} - A_{p+i, i}), &&\quad x'_i &&:= A_{i,i}+A_{p+i, p+i},  &&\quad x_{\pm i} &&:= \frac{1}{2}(x'_i\pm x_i), \\
    y_j &:= \ii (D_{j, q+j} - D_{q+j,j}),
    && \quad  y'_j &&:= D_{j,j}+D_{q+j, q+j},
    && \quad  y_{\pm j} &&:= \frac{1}{2}(y'_j\pm y_j).
\end{alignat*}
By a direct (rank 1) computation, we see that under $c$:
\begin{equation} \label{eqn:rk1}
    A_{i,i} \mapsto x_{+i}, \quad A_{p+i, p+i}\mapsto x_{-i}, \quad D_{j, j} \mapsto y_{+j}, \quad D_{q+j, q+j} \mapsto y_{-j}.
\end{equation}
Recall that $\tk$ denotes the standard diagonal Cartan subalgebra. We now define
\begin{equation} \label{eqn:hCartan}
    \hk := c(\tk), \quad \ak := \Spn_\C \{x_i, y_j\}, \quad \tk_+ := \Spn_\C \{x'_i, y'_j\}. \quad (\text{Note }\hk = \ak\oplus \tk_+.)
\end{equation}
In $\tk$, the space $\tk_- := \Spn_\C \{A_{i,i}-A_{p+i, p+i}, D_{j,j} - D_{q+j, q+j}\}$ is the orthogonal complement of $\tk_+$ %with respect to the Killing form on $\gk$.
with respect to form $\frac{1}{2}\str$ on $\gk$.
Also, on $\hk$ we let
\[
\epr_i, \dar_j, \tau_i\bos, \tau_j\fer \in \hk^*
\]
be dual to $x_i, y_j, x'_i, y'_j \in \hk$ respectively. 
Note that $\tau_k\bnf$ and $\alpha_k\bnf$ vanish on $\ak$ and $\tk_+$ respectively.  
We identify $\alpha_k\bnf$ with its restriction to $\ak$.
We also have $\tau_i\bos = \frac{1}{2}(\epu_i^++\epu_i^-)$, $\tau_j\fer = \frac{1}{2}(\dau_j^++\dau_j^-)$
and we identify them with their restrictions on $\tk_+$.
On $\ak^*$, we set
\begin{equation}\label{eqn:akForm}
    (\alpha_m\bos, \alpha_n\bos)=-(\alpha_m\fer, \alpha_n\fer) = \delta_{mn}, \quad
    (\alpha_i\bos, \alpha_j\fer)=0,
\end{equation}
which is induced from $b$. %, the one-half of the supertrace form.
For future purposes, we also consider the following basis for $\hk^*$:
\begin{equation} \label{eqn:chiEta}
    \chi_{\pm i} := \tau_i\bos \pm \alpha_i\bos, \quad 
    \eta_{\pm j} := \tau_j\fer \pm \alpha_j\fer.
\end{equation}

\subsection{Root Data} \label{subapp:RD}
Recall the superdimension of a super vector space $V$ is denoted as $\sdim V = \dim V\ev | \dim V\od$. Recall that for a (restricted) root %space $\gk_\alpha$ of $\alpha\in \Sigma(\gk, \cdot)$, 
the \textit{root multiplicity} is defined as $m_\alpha := \dim (\gk_\alpha)\ev -\dim (\gk_\alpha)\od,$
and the \textit{deformed root multiplicity} is defined as 
\[
\mult(\alpha) := -\frac{1}{2}m_\alpha.
\]

We first give an explicit description of $\Sigma(\gk, \hk)$ using the coordinates defined in Eq.~(\ref{eqn:chiEta}).
The order $<$ on the indices $\{\pm i, \pm j\}$ is interpreted as the natural order on $\mathbb{Z}$, where $+i$ means $i$.
We choose $\Sigma^+(\gk, \hk)$ so that
\begin{alignat}{2}
    \Sigma\ev^+(\gk, \hk) &= \{\chi_k - \chi_l: -p\leq k < l \leq+p\} \\
    &\cup \{\eta_k - \eta_l: -q \leq k < l \leq +q\}, \quad &&m_\alpha = 1, \label{eqn:ghEv}\\
    \Sigma\od^+(\gk, \hk) &= \{\chi_k - \eta_l:  +1 \leq k \leq +p, -q\leq l\leq +q \} & \label{eqn:ghOd}  \\
    &\cup \{\eta_l -\chi_k: -q\leq l\leq +q , -p \leq k \leq -1\}, \quad  &&m_\alpha = -1. \notag 
\end{alignat}
This can be seen by (I) viewing $\hk$ as a Weyl group conjugate of the standard Cartan subalgebra $\tk$ and identifying the roots expressed using $\epu^\pm$ and $\dau^\pm$, or (II) considering the Cayley transform which in turn gives a correspondence between positive roots on $\hk$ and those on $\tk$. Alternatively, the chain in Eq.~(\ref{eqn:hkChain}) gives the above choice of positive roots. 

We now record restricted root data taken from \cite{Z22}.
The restricted root system $\Sigma(\gk, \ak)$ is of Type \textit{C}. 
In the following table, we give the standard choice of positivity, together with the superdimensions of the restricted root spaces. 
The parameters $\mathsf{k, p, q, r, s}$ are $\mult(\alpha)$, and these are the 5 parameters first introduced by Sergeev and Veselov in \cite{SV09} to study interpolation polynomials. %See Remark~\ref{rmk:5Para}.
Note only the last column gives the odd restricted roots.

\begin{table}[ht] \setcounter{table}{0}
\begin{tabular}{
    |>{\centering\arraybackslash}p{45pt}
    |>{\centering\arraybackslash}p{45pt}
    |>{\centering\arraybackslash}p{65pt}
    |>{\centering\arraybackslash}p{45pt}
    |>{\centering\arraybackslash}p{45pt}
    |>{\centering\arraybackslash}p{65pt}
    |>{\centering\arraybackslash}p{65pt}|}
    \hline
     $\epr_i,\fr$ & $2\epr_i, \fs$ & $\epr_i\pm\epr_{k}, \mathsf{k}$ & $\dar_j, \fp$ & $2\dar_j, \fq$  &  $\dar_j\pm\dar_l, \mathsf{k}^{-1}$ & $\epr_i\pm\dar_j, 1$  \\
    \hline
     
     $0|0, 0$ & $1|0, -\frac{1}{2}$ & $2|0, -1$ & $0|0, 0$ & $1|0, -\frac{1}{2}$ & $2|0, -1$ & $0|2, 1$\\
    \hline
\end{tabular}
    \caption{Positive Restricted Roots and $\mult(\alpha)$. $1\leq i<k \leq p, 1\leq j<l \leq q$}  \label{tab:5para} %The command \label must appear after (or inside) \caption. Otherwise, it will pick up the current section or the list number instead of what is intended. 
\end{table}

\vspace{-25pt}
Note their $\epsilon_i$ and $\delta_p$ are our $\dar_j$ and $\epr_i$ respectively, due to a different choice of positivity (see \cite[Eq.~(71)]{SV09}).
A direct computation using Eqs.~(\ref{eqn:ghEv}) and (\ref{eqn:ghOd}) shows the Weyl vector in $\hk^*$ is
\begin{align} \label{eqn:hkRho}
    \rho_\hk &=\sum_{i=1}^p \left((p-i)+\frac{1}{2}-q\right ) (\chi_{+i}-\chi_{-i}) +\sum_{j=1}^q \left((q-i)+\frac{1}{2}\right) (\eta_{+j}-\eta_{-j}) \notag \\
    &= \sum_{i=1}^p (2(p-i)+1-2q) \epr_i +\sum_{j=1}^q (2(q-j)+1)\dar_j.
\end{align}
The Weyl vector of restricted roots in $\ak^*$ is:
\begin{equation} \label{eqn:akRho}
    \rho = \sum_{i=1}^p (2(p-i)+1-2q) \epr_i +\sum_{j=1}^q (2(q-j)+1)\dar_j,
\end{equation}
as the restriction of $\rho_\hk$ on $\ak$.

\subsection{Supersymmetric Shimura operators} \label{subsec:superSO}
Throughout the subsection, we set $i = 1, \dots, p$ and $j = 1, \dots, q$. Write $\cH$ for $\cH(p, q)$.
Recall $L(\lambda^\natural)$ is the irreducible $\gl(p|q)$-module of highest weight 
\[
\sum\lambda_i \epu_i+\sum\langle \lambda'_j-p\rangle \dau_j
\]
with respect to $\bk^{\std}$, and it is of Type \texttt{M} (\cite{CWBook}). In this case, Schur's Lemma indicates that $\dim \End_{\gl(p|q)}\left(L(\lambda^\natural)\right) = 1$, and $L(\lambda^\natural) \otimes L(\lambda^\natural)$ is actually irreducible as a $\kk$-module as $\kk = \gl(p|q)\oplus \gl(p|q)$. 
If we let $\gl(p|q)$ act on the second component contragrediently (via a negative supertranspose), then we define the irreducible $\kk$-module $L(\lambda^\natural) \otimes L^*(\lambda^\natural)$ as $\Wlambda$.
Note both $\pminus$ and $\pplus$ are $\kk$-modules by the short grading. We
identify $\pminus$ as $(\pplus)^*$ via the form $b$ also used for Eq.~(\ref{eqn:akForm}).

\begin{prop} \label{prop:goodCW}
The symmetric superalgebras $\mathfrak{S}(\pplus)$ and $\mathfrak{S}(\pminus)$ are completely reducible and multiplicity free as $\kk$-modules. Specifically,
    \begin{equation} \label{eqn:goodCW}
    \mathfrak{S}^d(\mathfrak{p}^+) = \bigoplus_{\lambda\in \cH^d} \Wlambda,\quad \mathfrak{S}^d(\mathfrak{p}^-) = \bigoplus_{\lambda\in \cH^d} \Wlambda^*.
\end{equation}
\end{prop}
\begin{proof}
    By duality, it suffices to show the first equation. 
    First we have $\pplus\cong \C^{p|q}\otimes(\C^{p|q})^*$, by identifying $\C^{p|q}$ and $(\C^{p|q})^*$ as spaces of column and row vectors respectively. 
    The contragredient $\kk$-module structure on $(\C^{p|q})^*$ is obtained by applying the negative supertranspose on $\gl(p|q)$.
    Then Theorem~\ref{cite:CW} implies
    \[
    \Sk^d(\pplus) = \bigoplus_{\lambda\in \cH^d} L(\lambda^\natural)\otimes L^*(\lambda^\natural),
    \]
    proving the claim, c.f. \cite[Theorem~1.4]{SSS2020} and the notation therein.
\end{proof}

We write down the highest weight of $\Wlambda$ with respect to the Borel subalgebra $\bk^{\std}\oplus \bk^{\opp}$ of $\kk$:
\begin{align}
      \lambda^\natural_\tk &:= \sum \lambda_i  \epu^+_i + \sum\langle \lambda_j'-p \rangle \dau^+_j -\sum \lambda_i  \epu^-_i - \sum \langle \lambda_j'-p \rangle \dau^-_j \label{eqn:naturalWt} \\
     &= \sum \lambda_i\gamma_i\bos + \sum \langle \lambda_j'-p \rangle \gamma_j\fer, \quad \gamma_i\bos, \gamma_j\fer \in \HCso.\label{eqn:gammaWt}
\end{align}
The part with negative terms is indeed dominant since we take the opposite Borel subalgebra for the second copy of $\gl(p|q)$ in $\kk = \gl(p|q)\oplus \gl(p|q)$.

As $\pk^\pm$ are supercommutative, the respective universal enveloping algebras are just $\Sk(\pk^\pm)$. The direct summand $\Wlambda^*\otimes \Wlambda$ embedded in $\mathfrak{S}(\pminus) \otimes\mathfrak{S}(\pplus)$ is then multiplied into $\Uk$. 
We write $1_\lambda$ for the element in $\left(\Wlambda^*\otimes \Wlambda\right)^\kk$ corresponding to $\mathrm{Id}_{\Wlambda}\in \End_\kk(\Wlambda)$. Let $\D := \quoUk$.

\begin{defn} \label{defn:ShimuraOp}
For each $\lambda \in \cH(p, q)$, we let $D_\lambda$ be the image corresponding to $1_\lambda$ under the composition of the multiplication and the embedding
\begin{alignat}{2} \label{eqn:longArrow}
        \left(\Wlambda^*  \otimes \Wlambda \right)^\kk \hookrightarrow \left( \Sk(\pminus)\otimes \Sk(\pplus) \right)^\kk  \rightarrow  &\Uk^\kk && \rightarrow \D \notag \\
        1_\lambda  \xmapsto{\hphantom{ 
        \left(\Wlambda^* \otimes \Wlambda \right)^\kk \hookrightarrow \left( \Sk(\pminus)\otimes \Sk(\pplus) \right)^\kk  \rightarrow  \Uk^\kk
        } }  
        & D_\lambda &&\mapsto \sD_\lambda.
\end{alignat}
The element $\sD_\lambda$ is called the \textit{supersymmetric Shimura operator associated with the partition }$\lambda$.
\end{defn}
\begin{rmk} \label{rmk:SSO}
For an irreducible Hermitian symmetric space $G/K$, such $D_\lambda$ can be similarly defined (c.f. \cite{SZ2019PoSo}) in $\D(G)$, the algebra of differential operators on $G$, identified with the universal enveloping algebra of the Lie algebra of $G$.
As $D_\lambda$ commutes with $K$, its right action descends to $\sD_\lambda \in \D$, the algebra of differential operators on $G/K$. These $\sD_\lambda$ are the original Shimura operators.
As we will study the action of $D_\lambda$ on $\gk$-modules, we call $D_\lambda\in \Uk^\kk$ as (supersymmetric) Shimura operators as well, by a slight abuse of name. 
Working with the lift $D_\lambda \in \Uk^\kk$ of $\sD_\lambda \in \Uk^\kk$ gives tremendous flexibility.
By the definitions of $\hcHomoGK$, $\hcIsoGK$, $D_\lambda$ and $\sD_\lambda$, we see
\begin{equation} \label{eqn:twoDs}
    \hcIsoGK(\sD_\lambda) = \hcHomoGK(D_\lambda).
\end{equation}
%and we can thus study $D_\lambda$ instead of $\sD_\lambda$. 
\end{rmk}

\subsection{Specialized Results} \label{subsec:special}
We now specialize some of the results we introduced in Section~\ref{sec:prelim}. 
First, we set the choice of positive roots $\Sigma^+(\gk, \tk)$ according to the chain (c.f. Subsection~\ref{subsec:CW}) 
\begin{equation}\label{eqn:chain}
    [\epu_1^+ \cdots \epu_p^+ \dau_1^+ \cdots \dau_q^+ \dau_q^- \cdots \dau_1^- \epu_p^-  \cdots \epu_1^-]
\end{equation}
which equivalently gives a Borel subalgebra of $\gk$, denoted as $\bk^\natural$, and we have
\begin{equation} \label{eqn:bNatural}
    \bk^\natural = \bk^{\std}\oplus \bk^{\opp}\oplus \pplus.
\end{equation}
Note the Cayley transform $c$ as in Eq.~(\ref{eqn:Cayley}) allows us to send the positivity on one Cartan subalgebra to that of the other Cartan subalgebra. Specifically, we have $c^{-1} : \hk \rightarrow \tk$, and the dual map $c^{-1}_*: \tk^* \rightarrow \hk^*$. 
It is then a direct computation to check that 
\begin{equation}\label{eqn:Cayley*}
    c^{-1}_*: \epu^\pm_i \mapsto \chi_{\pm i}, \quad \dau^\pm_j \mapsto \eta_{\pm j}.
\end{equation}
The notion of $\epu\dau$-chain carries over via $c$. Thus, the induced positivity of $\Sigma(\gk, \hk)$ is given by
\begin{equation} \label{eqn:hkChain}
    [\chi_{+1} \cdots \chi_{+p} \eta_{+1} \cdots \eta_{+q}\eta_{-q} \cdots \eta_{-1} \chi_{-p} \cdots \chi_{-1}].
\end{equation}
We denote the corresponding Borel subalgebra as $\bk$.
%Specifically, when restricted to $\ak^*$, $2\epr_i, 2\dar_j$, $\epr_i \pm \epr_{i'}$, $\dar_j \pm \dar_{j'}$, and $\epr_i \pm \dar_{j}$ are positive restricted roots in $\Sigma^+ = \Sigma^+(\gk, \ak)$.

For the Harish-Chandra isomorphism $\hcZIso := \hcZIso_\hk$ on the non-diagonal $\hk$, we obtain it via the Cayley transform $c$. 
By Eq.~(\ref{eqn:rk1}), $c$ sends the standard basis for $\tk$ the basis $\{x_{\pm i}, y_{\pm j}\}$ for $\hk$. 
Thus by a change of variable using $c$, Theorem~\ref{cite:ImhcZIso} and Eq.~(\ref{eqn:ZHCiso}) imply
\begin{equation}
    \hcZIso: \Zk \rightarrow \Pk(\hk^*) = I_\C(x_{\pm i}, y_{\pm j}) = \Lambda(\hk^*).
\end{equation}
The central character $\chi_\lambda$ for $\lambda \in \hk^*$ is now given by
\begin{equation}\label{eqn:ZCharHC}
    \chi_\lambda(z) = \gamma(z)(\lambda + \rho_\hk).
\end{equation}
Specifically, $c(\Nk^+)$ corresponds to the chain in Eq.~(\ref{eqn:hkChain}) above. 

For the Harish-Chandra projection $\hcHomoGK$ and homomorphism $\hcIsoGK$, we choose $\nk$ to be compatible with $c(\Nk^-)$ (see Eq.~(\ref{eqn:bigTriUDec})) from which we have $\rho_\ak := -\rho$ and
\begin{equation} \label{eqn:g0+rho=p}
    \hcHomoGK(D)(\lambda+\rho) = p(D)(\lambda).
\end{equation}
%For the Harish-Chandra isomorphism $\hcIsoGK$, we simply apply Eqs.~(\ref{eqn:Iwasawa}) and (\ref{eqn:negChoice}) to the choice of $\gk, \kk, \ak$ and $\nk^-$ in this context.
See Eqs.~(\ref{eqn:hkRho}) and (\ref{eqn:akRho}) for the Weyl vectors $\rho_\hk$ and $\rho$. %,  of $\Sigma^+(\gk, \hk)$ and $\Sigma^+$ respectively. 

Let $\Bk$ be a Borel subalgebra of $\gk$ containing a Cartan subalgebra $\Hk$.
We let $V(\mu, \Bk )$ denote the irreducible $\gk$-module of $\Bk$-highest weight $\mu \in \Hk^*$. Define $V_\lambda := V(\lambda^\natural_\tk, \bk^\natural)$ for $\lambda\in \cH$ (see Eqs.~(\ref{eqn:naturalWt}), (\ref{eqn:gammaWt}) and (\ref{eqn:bNatural}) for $\lambda_\tk^\natural$ and $\bk^\natural$). A direct check of dominance condition shows that $V_\lambda$ is finite dimensional (\cite[Theorem~4.7]{Z22}). %The proof involves consideration of odd reflections. 
%In light of the result above, we write
Proposition~3.4 in \cite{Z22}, which is essentially a rank-1 calculation of Cayley transforms, tells us how the highest weights change with respect to different Borel subalgebras. Explicitly, we have
\begin{equation} \label{eqn:naturalV}
    V_\lambda = V(\lambda^\natural_\tk, \bk^\natural) = V(2\lambda_\hk^\natural, \bk),
\end{equation}
(see Eq.~(\ref{eqn:hkChain}) for $\bk$) where
\begin{equation} \label{eqn:akEvenWt}
    \lambda_\hk^\natural := \sum_{i = 1}^p \lambda_i \alpha_i\bos+\sum_{j = 1}^q \left \langle \lambda_j'-p \right \rangle  \alpha_j\fer \in \hk^*.
\end{equation}

A $\gk$-module $V$ is said to be $\kk$-\textit{spherical} if $V^\kk := \{v\in V: X.v =0 \textup{ for all }X\in \kk \}$ is non-zero. A non-zero vector in $V^\kk$ is called a $\kk$-\textit{spherical} vector. When the context is clear, we simply say \textit{spherical} instead of $\kk$-spherical. 
%Let $\nk^-$ be the nilpotent subalgebra for $\Sigma^- := -\Sigma^+$ with the corresponding Weyl vector $\rho^- = -\rho$. We may then write down the \textit{opposite} Iwasawa decomposition
%\begin{equation} \label{eqn:Iwasawa}
%    \gk = \nk^-\oplus \ak \oplus \kk .
%\end{equation}
%From now on, we set
%\begin{equation} \label{eqn:negChoice}
%    \hcHomoGK := \hcHomoGK_{\rho^-}, \quad \hcIsoGK := \hcIsoGK_{\rho^-}.
%\end{equation}
%Let us now relate highest weights and $\hcHomoGK$ via the action of $\Uk^\kk$, the $\kk$ centralizer in $\Uk$, on a spherical representation. 
It follows from definition that $V^\kk$ is $\Uk^\kk$-invariant. In particular, we record the following Lemma (see \cite[Lemma~5.1]{Z22}).
\begin{lem} \label{lem:ukScalar}
    Let $V$ be a $\gk$-module. If $\dim V^\kk = 1$, then $D\in \Uk^\kk$ acts on $V^\kk$ by a scalar.
\end{lem}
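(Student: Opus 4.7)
The plan is to argue that the one-dimensional subspace $V^\kk$ is itself preserved by every $D \in \Uk^\kk$, at which point the conclusion is immediate: any linear endomorphism of a one-dimensional space is multiplication by a scalar. So the only thing to check is the invariance $D \cdot V^\kk \subseteq V^\kk$, and this will follow directly from the definition of $\Uk^\kk$ as the space of elements annihilated by the graded adjoint action of $\kk$ on $\Uk$.

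Concretely, I would fix a nonzero $v \in V^\kk$ and, without loss of generality, a homogeneous $D \in \Uk^\kk$; the general case follows by linearity and $\ZeeII$-decomposition. For any homogeneous $X \in \kk$, the hypothesis $D \in \Uk^\kk$ means $\ad(X)(D) = XD - (-1)^{|X||D|}DX = 0$ in $\Uk$, so
\[
X \cdot (D \cdot v) \;=\; (XD) \cdot v \;=\; (-1)^{|X||D|} (DX) \cdot v \;=\; (-1)^{|X||D|} D \cdot (X \cdot v) \;=\; 0,
\]
the last equality because $v \in V^\kk$. Hence $D \cdot v \in V^\kk$. Since $V^\kk = \C v$ by hypothesis, there exists a scalar $c_D$ with $D \cdot v = c_D v$, which is the desired statement.

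There is no serious obstacle here; the lemma is essentially a tautology, combining the fact that $\Uk^\kk$ graded-commutes with $\kk$ inside $\Uk$ with a one-dimensional Schur-style argument. The only minor point worth flagging is the sign $(-1)^{|X||D|}$ arising in the super-setting, but this sign is harmless because it multiplies $X \cdot v$, which is already zero.
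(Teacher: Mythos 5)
Your argument is correct and coincides with the paper's own (commented-out) proof: both use the graded commutation $XD = (-1)^{|X||D|}DX$ for $D \in \Uk^\kk$ to show $D\cdot V^\kk \subseteq V^\kk$, and then conclude by one-dimensionality. The paper additionally remarks that $D \mapsto c_D$ is a character, but that is not needed for the statement itself.
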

\commentout{
\begin{proof}
    Let $X\in \kk$ and $u\in \Uk^\kk$ be homogeneous. Then we have $X.(u.v) = (-1)^{|X||u|} u.(X.v)$ for any $v\in V$. If $v$ is spherical, then clearly $u.v$ is again spherical and therefore a scalar multiple of $v$. We write $u.v = c(u)v$. Then $(uu').v = u.(u'.v)$ implies that $c$ is a character.
\end{proof}
}

A result of the second author \cite[Proposition~4.1]{Z22} says that if $V$ is a finite dimensional irreducible $\kk$-spherical $\gk$-module $V$, then indeed $\dim V^\kk = 1$. 
%Another result in the same work \cite[Theorem~5.2]{Z22} relates this scalar with the highest weight restricted on $\ak$. \

\begin{defn} \label{def:sphWt}
    Let $\Bk$ be a Borel subalgebra of $\gk$ and $\Hk\subseteq \Bk$ be a Cartan subalgebra. We call $\mu \in \Hk^*$ a \emph{$(\Bk, \kk)$-spherical weight} if $V(\mu, \Bk)$ is $\kk$-spherical and finite dimensional.
\end{defn}

When the context is clear, we suppress the prefix $(\Bk, \kk)$. We will mostly focus on the case where $\Bk = \bk$ and $\Hk = \hk$.  %The conjecture in \cite{Z22} claims that $2\lambda^\natural_\hk$ is always $(\bk, \kk)$-spherical. 
By \cite[Lemma~4.4]{Z22}, if $\lambda$ is a spherical weight, then $\lambda|_{\tk_+}=0$ and we identify $\lambda$ with an element of $\ak^*$.
\begin{prop} \label{cite:scalarClaim}
    Let $\lambda\in \ak^*$ be $(\bk, \kk)$-spherical and $D\in \mathfrak{U}^\kk$. Then $D$ acts on $V(\lambda, \bk)^\kk$ by the scalar $\hcHomoGK(D)\left(\lambda+\rho\right)$. 
\end{prop}
We postpone the proof to Appendix~\ref{app:A}; the argument is similar to \cite[Theorem~5.4]{Z22}. 

%==================================================================================================
\section{Harish-Chandra Isomorphisms and a Commutative Diagram} \label{sec:HCIsos}
Let $\hcHomoGK$, $\hcIsoGK$, and  $\hcZIso$ be as in Section~\ref{sec:prelim} and Section~\ref{sec:setUp}. In this section, we prove Theorem~\ref{thm:B}. The strategy is to use a commutative diagram (\ref{diag:tight}) and the surjectivity of a certain restriction map $\Res$ defined below.
We also prove that $\ker \hcHomoGK = (\Uk\kk)^\kk$ and $\Ima \hcHomoGK = \Ima \hcIsoGK$ consists precisely of even supersymmetric polynomials on $\ak^*$, independent of \cite{Z22}.

\subsection{Image of \texorpdfstring{$\hcIsoGK$}{gamma0}}\label{subsec:2Im&Res}
Recall that the image of $ \hcZIso$ is the ring $\Lambda(\hk^*)$ of supersymmetric polynomials on $\hk^*$. 
The image of $\hcHomoGK$ (equivalently of $\hcIsoGK$), however, is less well-understood. 
Alldridge's work \cite{A2012} formulates $\Ima \hcIsoGK$ as the intersection of some subalgebras of $\Sk(\ak)$, while the second author proved a Weyl groupoid invariance formulation in \cite{Z22}. We give an independent description below.

We first introduce even supersymmetric polynomials on par with what we did in Subsection~\ref{subsec:ZHC}.
Let $\{x_i\}:=\{x_i\}_{i = 1}^m$ and $\{y_j\}:=\{y_j\}_{j=1}^n$ be two sets of independent variables, and set $\{x_i, y_j\} := \{x_i\}\cup \{y_j\}$. We write $f(x_1 = t, y_1 = -t)$ for the polynomial obtained by the substitutions $x_1 = t$ and $y_1 = -t$. A polynomial $f$ in $\{x_i, y_j\}$ is said to be \textit{even supersymmetric} if (1) $f$ is invariant under permutations of $\{x_i\}$ and of $\{y_j\}$ separately, and invariant under sign changes of $\{x_i, y_j\}$ and (2) $f(x_1 = t, y_1 = -t)$ is independent of $t$.
Denote the algebra of even supersymmetric polynomials in $\{x_i, y_j\}$ as $\SEvR_{m, n}$.
It is not hard to see that an even supersymmetric polynomial is supersymmetric in $\{x_i^2, y_j^2\}$, so $\SEvR_{m, n}$ is generated by $p_{2r}(m, n)$ in Eq.~(\ref{eqn:power}) for $r\in \mathbb{Z}_{>0}$.

Let $\{\epr_i\}_{i=1}^p\cup\{\dar_j\}_{j=1}^q$ be the standard basis for $\ak^*$, equipped with the inner product $(\cdot, \cdot)$ as in Eq.~(\ref{eqn:akForm}).
%such that $(\epsilon_i, \epsilon_j) = -(\delta_i, \delta_j) = c\delta_{i,j}$ and $(\epsilon_i, \delta_j) = 0$ for some $c>0$. 
%Denote the coordinate functions for this basis as $\{x_i\}$ and $\{y_j\}$.
Thus $\{x_i\}$ and $\{y_j\}$ as in Subsection~\ref{subsec:realize} are the coordinates of this basis.
The Condition (2) above is equivalent to (see \cite{musson2012lie}) (2') $f(X+\epr_1-\dar_1) = f(X)$ if $(X, \epr_1-\dar_1) = 0$.
As such, $\SEvR_{m, n}$ is identified as a subalgebra of $\Pk(\ak^*)$, and denoted as $\RingEv(\ak^*)$ (c.f. $\Lambda(\hk^*)$).

Let $W_0$ be the Weyl group associated with the restricted root system $\Sigma := \Sigma(\gk, \ak)$ (see Subsection~\ref{subapp:RD}). 
Then $W_0$ is of Type \textit{BC} and equals $(\mathscr{S}_p \ltimes (\mathbb{Z}/2\mathbb{Z})^p)\times (\mathscr{S}_q \ltimes (\mathbb{Z}/2\mathbb{Z})^q)$.
Originally in \cite{A2012}, $\Ima \hcHomoGK$ is given by the intersection of certain subspaces using explicitly chosen elements.
In \cite{Z22}, it is proved that this description is independent of any choice, and we have
\begin{equation} \label{eqn:UkHCiso}
     \Ima \hcHomoGK = \RingEv(\ak^*), \quad
     \quoUk \xrightarrow[\sim]{\hcIsoGK} \RingEv(\ak^*).
\end{equation}
We will offer a different proof that $\ker \hcHomoGK = (\Uk\kk)^\kk$ and $\Ima \hcHomoGK = \Ima \hcIsoGK = \RingEv(\ak^*)$ below in Subsection~\ref{subsec:commDiag}.

\begin{rmk}
    See \cite{SV2011Grob, Z22} for an alternative way of writing down (even) supersymmetric polynomials. Namely, we can consider a so-called Weyl groupoid that captures both the usual double symmetry on two sets of coordinates and the translational invariance on hyperplanes for all isotropic (restricted) roots (as in Condition (2')). 
    If $\mathfrak{W}$ (respectively $\mathfrak{W}_0$) denotes the Weyl groupoid associated with $\Sigma(\gk, \hk)$ (respectively $\Sigma(\gk, \ak)$), then $\Ima \hcZIso = \mathfrak{P}(\hk^*)^{\mathfrak{W}}$ while $\Ima \hcIsoGK =  \Pk(\ak^*)^{\mathfrak{W}_{0}}$.
\end{rmk}

% Recall $\hk = \tk_+\oplus\ak$. %Let $pr$ be the respective projection from $\hk$ to $\ak$.
% %Then we extend $pr$ to a projection homomorphism from $\Sk(\hk)$ to $\Sk(\ak)$, again denoted as $pr$.
% The respective projection from $\hk$ to $\ak$ can be extended to a projection homomorphism from $\Sk(\hk)$ to $\Sk(\ak)$, denoted as $pr$.
% Under the identification between $\Sk(V)$ and $\Pk(V^*)$, we let $\Res$ be the ``restriction" homomorphism corresponding to $pr$. 
% Specifically, the adjoint map $pr^*:\ak^*\rightarrow\hk^*$ gives a pullback of a polynomial $f$ defined on $\hk^*$ to a polynomial $g$ defined on $\ak^*$, and $\Res(f) := g$.
% We denote its restriction on $\Lambda(\hk^*)$ as $\Res$ too.

Recall $\hk = \tk_+\oplus\ak$.
The respective projection onto $\ak$ can be extended to a projection from $\Sk(\hk)$ to $\Sk(\ak)$, denoted as $\Res$.
With the identification $\Sk(V)\cong \Pk(V^*)$, $\Res$ is exactly the restriction of a polynomial $f$ defined on $\hk^*$ to a polynomial $g$ defined on $\ak^*$, so $\Res(f) := g$.
Next, we show that $\Res$ on $\Lambda(\hk^*)$ surjects onto $\RingEv(\ak^*)$. Loosely speaking, the non-restricted supersymmetry should cover the restricted supersymmetry entirely.

\begin{prop} \label{prop:Res}
    The restriction map $\Res$ on $\Lambda(\hk^*)$ surjects onto $\RingEv(\ak^*)$.
\end{prop}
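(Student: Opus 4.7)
The plan is to explicitly track $\Res$ on a convenient set of $\C$-algebra generators of $\Lambda(\hk^*)$. By Stembridge's theorem (Theorem~\ref{cite:ImhcZIso}), applied on $\hk^*$ with the $2p$ even-type variables $\{x_{\pm i}\}$ and $2q$ odd-type variables $\{y_{\pm j}\}$ (the basis of $\hk$ from Eq.~(\ref{eqn:hCartan})), the ring $\Lambda(\hk^*)$ is generated as a $\C$-algebra by the power sums
\[
P_r := \sum_{i=1}^p \bigl(x_{+i}^r + x_{-i}^r\bigr) - (-1)^r \sum_{j=1}^q \bigl(y_{+j}^r + y_{-j}^r\bigr), \quad r \in \mathbb{Z}_{>0}.
\]

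First I would compute how the projection $p \colon \hk = \tk_+ \oplus \ak \to \ak$ acts on the basis $\{x_{\pm i}, y_{\pm j}\}$. From $x_{\pm i} = \tfrac{1}{2}(x'_i \pm x_i)$ together with $x'_i \in \tk_+$, one obtains $p(x_{\pm i}) = \pm \tfrac{1}{2}x_i$, and likewise $p(y_{\pm j}) = \pm \tfrac{1}{2} y_j$. Extending $p$ multiplicatively to $\Sk(\hk) \to \Sk(\ak)$, the contribution of each pair $\{x_{+i}^r, x_{-i}^r\}$ becomes $(1+(-1)^r) \cdot 2^{-r} x_i^r$, which vanishes for odd $r$ and equals $2^{1-r} x_i^r$ for even $r$; the $y$-pairs behave identically. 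Consequently $\Res(P_r) = 0$ for odd $r$, whereas
\[
\Res(P_{2s}) = 2^{1-2s}\Bigl(\sum_{i=1}^p x_i^{2s} - \sum_{j=1}^q y_j^{2s}\Bigr) = 2^{1-2s}\,p_{2s}^{(p,q)}(x_i,y_j).
\]

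The conclusion is now immediate. As noted at the start of this subsection, $\RingEv_{p,q} = \RingEv(\ak^*)$ is generated as a $\C$-algebra by the even power sums $\{p_{2s}^{(p,q)} : s \ge 1\}$, and the computation above shows each such generator is hit by $\Res$ up to a nonzero scalar. Hence $\Res$ surjects onto $\RingEv(\ak^*)$, and the image lies in $\RingEv(\ak^*)$ automatically since this subring contains every computed image on generators. The main conceptual input, and the only nontrivial one, is Stembridge's explicit generating set for $\Lambda(\hk^*)$; once that is invoked, the proof reduces to a direct power-sum computation and there is no real obstacle.
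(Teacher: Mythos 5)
Your proof is correct and follows essentially the same route as the paper: apply $\Res$ to Stembridge's power-sum generators of $\Lambda(\hk^*)$ using the projection $x_{\pm i}\mapsto \pm\tfrac12 x_i$, $y_{\pm j}\mapsto\pm\tfrac12 y_j$, observe that odd power sums die and even power sums map to nonzero multiples of the even power-sum generators of $\RingEv(\ak^*)$. Your computed constant $2^{1-2s}$ is in fact the correct one (the paper's displayed formula has a minor indexing slip there, writing $p_{2r}^{(p,q)}$ where $p_r^{(p,q)}$ is meant).
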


\begin{proof}
    We show that $\Res$ maps generators of $\Lambda(\hk^*)$ to all the generators of $\RingEv(\ak^*)$.
    For this, we choose coordinate systems for both $\Pk(\hk^*)$ and $\Pk(\ak^*)$ as in Section~\ref{sec:setUp}.
    Throughout the proof we set indices $i=1, \dots, p, j = 1\dots, q$.
    Recall $x_i$, $y_j$, $x_i'$, $y_j'$, $x_{\pm i}$, and $y_{\pm j}$ from Subsection~\ref{subsec:realize}. Also, $x_{\pm i}$ and $y_{\pm j}$ span $\hk$ while $x_i$ and $y_j$ span $\ak$ by Eq.~(\ref{eqn:hCartan}). 
    Thus by definition, the homomorphism $\Res$ on $\Pk(\hk^*) = \C[x_{\pm i}, y_{\pm j}]$ makes the substitution
    \begin{equation} \label{eqn:halfProj}
        x_{\pm i} \mapsto \pm\frac{1}{2}x_i, \quad y_{\pm j} \mapsto \pm\frac{1}{2}y_j.
    \end{equation}
    The generators in Eq.~(\ref{eqn:power}) of  $\Lambda(\hk^*)$ become $p_r^{(2p, 2q)}(x_{\pm i}, y_{\pm j})$.
    By Eq.~(\ref{eqn:halfProj}), we have
    \begin{equation*}
        \Res(p_r^{(2p, 2q)}(x_{\pm i}, y_{\pm j}) ) = 
        \begin{cases}
        2^{1-r} p_{2r}^{(p, q)}(x_i, y_j), & \text{ if } r \text{ is even }\\ 
        0, & \text{ if } r \text{ is odd}
        \end{cases}.
    \end{equation*}
    The preimage of $p_{2r}^{(p, q)}(x_i, y_j)$ under $\Res$ is $2^{r-1}p_{2r}^{(2p, 2q)}(x_{\pm i}, y_{\pm j})$,
    meaning the generators of $\RingEv$ are in the image of $\Res$.
    Therefore, $\Ima \Res = \RingEv(\ak^*)$.
    \end{proof}

    The substitution in Eq.~(\ref{eqn:halfProj}) also tells us that $f\in \Lambda(\hk^*)$ and $\Res(f) \in \RingEv(\ak^*)$ are related by
    \begin{equation} \label{eqn:resEval}
        f\left(\sum_i a_i(\chi_{+i}-\chi_{-i})+\sum_j b_j(\eta_{+j}-\eta_{-j})\right) = (\Res(f))\left(\sum_i 2a_i\alpha_i\bos +\sum_j 2b_j\alpha_j\fer\right) .
    \end{equation}

\subsection{A commutative diagram} \label{subsec:commDiag}	  
The center $\Zk$ acts on $V(\lambda, \bk)$ by a character via $\hcZIso$. Considered as a subalgebra of $\Uk^\kk$, there is another way to compute such a character using $\hcIsoGK$ if $V(\lambda, \bk)$ is spherical.
We aim to relate the two Harish-Chandra isomorphisms $\hcZIso$ and $\hcIsoGK$. 

We denote the composition map $\Zk \hookrightarrow \Uk^\kk \twoheadrightarrow \quoUk$ as $\pi$.
Note by definition, $(\Uk\kk)^\kk\subseteq \Uk\kk$ is contained in the kernel of the Harish-Chandra projection, hence in $\ker \hcHomoGK$. So the homomorphism $\hcIsoGK: \D = \quoUk \rightarrow\Pk(\ak^*)$ is well-defined and $\Ima \hcHomoGK = \Ima \hcIsoGK$.
We form the following diagram first. %\fixIt{Note we have not established that $\hcIsoGK$ is an isomorphism onto $\RingEv(\ak^*)$ yet. }
\begin{equation} \label{diag:loose}
    \begin{tikzcd}
        \Zk \arrow[r, "\pi"] \arrow[d, "\hcZIso"', "\wr"] & \D \arrow[d, "\hcIsoGK "] \\
        \Lambda(\hk^*) \arrow[r, "\Res"]                            & \Pk(\ak^*)
    \end{tikzcd}
\end{equation}
Now Theorem~\ref{thm:B} follows from the two assertions below. 

\begin{prop} \label{prop:commDiag}
    The diagram \textup{(\ref{diag:loose})} is commutative.
\end{prop}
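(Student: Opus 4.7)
The strategy is to establish the identity $\Res \circ \hcZIso = \hcHomoGK' \circ \pi$ by evaluating both sides on a Zariski dense subset of $\ak^*$ coming from $\kk$-spherical modules. Fix $z \in \Zk$ and note that $\hcHomoGK'(\pi(z)) = \hcHomoGK(z)$ as elements of $\Pk(\ak^*)$. For each $\lambda \in \cH(p,q)$ with $\lambda_p > \langle \lambda'_1 - p\rangle$, Proposition~\ref{prop:AlldridgePart} combined with Eq.~(\ref{eqn:naturalV}) produces a finite dimensional irreducible $\kk$-spherical module $V(\lambda^\natural) = V(2\lambda_\hk^\natural, \bk)$ with $\dim V(\lambda^\natural)^\kk = 1$; in particular $z$ acts on $V(\lambda^\natural)^\kk$ by a well-defined scalar.

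The heart of the argument is to compute this scalar in two ways. By centrality of $z$ and Eq.~(\ref{eqn:ZCharHC}), the scalar equals the central character $\chi_{2\lambda_\hk^\natural}(z) = \hcZIso(z)(2\lambda_\hk^\natural + \rho_\hk)$. Treating $z$ instead as an element of $\Uk^\kk$ and applying Theorem~\ref{cite:scalarClaim} yields the same scalar as $\hcHomoGK(z)(2\overline{\lambda^\natural} + \rho) = \hcHomoGK'(\pi(z))(2\overline{\lambda^\natural} + \rho)$. Now Eqs.~(\ref{eqn:akEvenWt}) and~(\ref{eqn:hkRho}) show that both $2\lambda_\hk^\natural$ and $\rho_\hk$ are written entirely in the $\{\epr_i,\dar_j\}$ coordinates dual to $\ak$, so they vanish on $\tk_+$ and satisfy $2\lambda_\hk^\natural = p^*(2\overline{\lambda^\natural})$ and $\rho_\hk = p^*(\rho)$, where $p^*:\ak^* \to \hk^*$ is dual to $p$. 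By the definition of $\Res$, this gives $\hcZIso(z)(2\lambda_\hk^\natural + \rho_\hk) = \Res(\hcZIso(z))(2\overline{\lambda^\natural} + \rho)$. Equating the two computed scalars, the polynomials $\Res(\hcZIso(z))$ and $\hcHomoGK'(\pi(z))$ must agree at every point of
\[
S := \bigl\{\, 2\overline{\lambda^\natural} + \rho : \lambda \in \cH(p,q),\ \lambda_p > \langle \lambda'_1 - p\rangle \,\bigr\} \subseteq \ak^*.
\]

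The remaining task, which I expect to be the main obstacle, is verifying that $S$ is Zariski dense in $\ak^*$; the proposition will then follow since two polynomials agreeing on a Zariski dense set are equal. To produce density I would consider $\lambda$ whose first $p$ rows are $(N+a_1,\ldots,N+a_p)$ for a fixed $N \geq q$ and arbitrary integers $a_1 \geq \cdots \geq a_p \geq 0$, with the remaining rows forming an arbitrary partition whose parts are bounded by $q$. The condition $\lambda_p > \langle \lambda'_1 - p\rangle$ is easily arranged by taking $N$ large. In the $\{\alpha_i\bos,\alpha_j\fer\}$-basis such $\lambda$ produce points whose coordinates are $(2(N+a_i),2c'_j)$, where $(c'_j)$ is the transpose of the bottom partition and ranges over arbitrary decreasing sequences in $\N^q$. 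A standard inductive argument, fixing all but one coordinate and using that a univariate polynomial with infinitely many zeros vanishes, then shows that the lattice points in such a product of dominance chambers are Zariski dense in $\C^{p+q} \cong \ak^*$, closing the proof.
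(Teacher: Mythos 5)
Your proof is essentially the same as the paper's: compute the scalar by which $z\in\Zk$ acts on the spherical line $V(\lambda^\natural)^\kk$ in two ways --- once via the central character and $\hcZIso$, once via Theorem~\ref{cite:scalarClaim} and $\hcHomoGK$ --- identify the restriction to $\ak^*$ via $\Res$, and then equate the two polynomials on a Zariski dense set. The one small wrinkle is in the density step: your assertion that $(c'_j)$ ``ranges over arbitrary decreasing sequences in $\N^q$'' is not literally correct, since the Alldridge condition $\lambda_p>\langle\lambda'_1-p\rangle$ translates to $N+a_p>c'_1$ and hence bounds $c'_1$ in terms of $a_p$ for each fixed $N$; taking ``$N$ large'' does not decouple this, because $N$ is fixed in your construction. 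This does not derail the argument --- the admissible set still contains, e.g., all lattice points with $a_1\ge\cdots\ge a_p\ge c'_1\ge\cdots\ge c'_q\ge 0$, which is enough --- but the phrasing overstates the freedom. The paper avoids this by observing directly that the set of restricted weights is exactly the intersection of the open cone $x_1\ge\cdots\ge x_p>y_1\ge\cdots\ge y_q\ge 0$ with the full lattice $(2\mathbb Z)^{p+q}$, which is Zariski dense in $\ak^*\cong\C^{p+q}$ in one stroke.
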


\begin{prop} \label{prop:gamma'}
    The Harish-Chandra homomorphism $\hcIsoGK$ is an isomorphism  and $\Ima\hcIsoGK = \RingEv(\ak^*)$.
\end{prop}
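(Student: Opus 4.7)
The plan is to establish two claims which together imply the proposition: (I) $\Ima \hcHomoGK' = \RingEv(\ak^*)$, and (II) $\hcHomoGK'$ is injective, so that $\ker \hcHomoGK = (\Uk\kk)^\kk$ and $\hcHomoGK'$ coincides with $\hcIsoGK$ as an isomorphism out of $\D$.

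The easy half $\Ima \hcHomoGK' \supseteq \RingEv(\ak^*)$ of (I) follows directly from Diagram~(\ref{diag:loose}) of Proposition~\ref{prop:commDiag}, together with the surjectivity of $\Res$ (Proposition~\ref{prop:Res}) and the surjectivity of $\hcZIso$ onto $\Lambda(\hk^*)$ (Theorem~\ref{cite:ImhcZIso}). Concretely, given $f \in \RingEv(\ak^*)$, lift to $\tilde f \in \Lambda(\hk^*)$ via $\Res$, then to $z \in \Zk$ via $\hcZIso$; commutativity yields $\hcHomoGK'(\pi(z)) = \Res(\hcZIso(z)) = f$, placing $f$ in $\Ima \hcHomoGK'$.

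The reverse inclusion in (I) is obtained by verifying the two defining conditions of even supersymmetric polynomials directly on $\hcHomoGK(D)$: invariance under the restricted Weyl group $W_0 = (\mathscr{S}_p\ltimes(\mathbb{Z}/2)^p)\times(\mathscr{S}_q\ltimes(\mathbb{Z}/2)^q)$ of type $BC_p\times BC_q$ is the classical Harish-Chandra invariance coming from reflections in the even restricted roots of Table~\ref{tab:5para}, while translation invariance on the isotropic hyperplanes associated with the odd restricted roots $\epr_i\pm\dar_j$ is obtained by a PBW-shuffling argument using the odd root vectors attached to these roots, producing supercommutator identities in $\Uk^\kk$ modulo $(\Uk\kk)^\kk$. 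For (II), suppose $D \in \Uk^\kk$ with $\hcHomoGK(D) = 0$: by Proposition~\ref{prop:AlldridgePart} and Theorem~\ref{cite:scalarClaim}, $D$ acts by zero on the one-dimensional $\kk$-fixed line of $V(\lambda^\natural)$ for every $\lambda \in \cH(p,q)$ with $\lambda_p > \langle\lambda_1'-p\rangle$, and the corresponding restricted highest weights $2\overline{\lambda^\natural}+\rho$ form a Zariski dense subset of $\ak^*$. Combining this vanishing with the Cheng--Wang decomposition of $\Sk(\pk^\pm)$ from Proposition~\ref{prop:goodCW} and the Iwasawa PBW decomposition of $\Uk$, one deduces that the class of $D$ in $\D = \Uk^\kk/(\Uk\kk)^\kk$ is determined by its scalar actions on these spherical lines, forcing $D \in (\Uk\kk)^\kk$. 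The main obstacle is the super-specific input in the last two steps: establishing the isotropic invariance via odd root vectors, and showing that the action on spherical lines detects the class in $\D$, both of which replace the classical $K$-averaging argument that is unavailable in the super setting.
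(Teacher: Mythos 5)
The easy inclusion $\RingEv(\ak^*)\subseteq\Ima\hcHomoGK'$ you establish is exactly the paper's Step~3, so no issue there. However, your plan diverges for the other half of the statement, and the injectivity part (II) contains a circularity. By Theorem~\ref{cite:scalarClaim}, the scalar by which $D\in\Uk^\kk$ acts on the $\kk$-spherical line of $V(\lambda^\natural)$ is precisely $\hcHomoGK(D)(2\overline{\lambda^\natural}+\rho)$. In other words, the evaluation of $D$ on spherical lines \emph{factors through} $\hcHomoGK'$. Consequently, the hypothesis $\hcHomoGK(D)=0$ makes these scalars vanish tautologically, and their vanishing cannot give you any information beyond $\hcHomoGK(D)=0$; it certainly cannot force the class of $D$ in $\D$ to be zero unless you already know $\ker\hcHomoGK' = 0$. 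The appeal to the Cheng--Wang decomposition and the Iwasawa PBW decomposition in your last sentence is not a substitute, because these do not produce an independent evaluation of the action of $D_\lambda$ on spherical vectors; making that precise would require knowing a priori that the diagonal scalar $D_\mu.\omega_\mu$ in $I_\mu^\kk$ is nonzero, which in this paper is only obtained (Corollary~\ref{cor:eval}) as a \emph{consequence} of Theorem~A.

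The paper avoids this entirely by a filtered dimension count. Using Definition~\ref{defn:ShimuraOp} and Proposition~\ref{prop:goodCW} it shows $\{\sD_\lambda : \lambda\in\cH_d\}$ is a basis of the filtered piece $\D_d$, hence $\dim\Ima\hcHomoGK'_d\le|\cH_d|$. Then Proposition~\ref{prop:noShift} (the Sergeev--Veselov polynomials $J_\mu$ specialized to this pair) gives $\dim\RingEv_d(\ak^*)=|\cH_d|$, and combining with your easy inclusion $\RingEv_d(\ak^*)\subseteq\Ima\hcHomoGK'_d$ forces $\Ima\hcHomoGK'_d=\RingEv_d(\ak^*)$ and injectivity of $\hcHomoGK'_d$ simultaneously. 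This also makes the paper's proof logically dependent on Proposition~\ref{prop:noShift}, a crucial input you never invoke. Your proposed route for the reverse inclusion of (I) --- verifying $W_0$-invariance from the even restricted roots and the isotropic hyperplane condition from supercommutators with odd root vectors --- is the Alldridge/Z22 style approach, which the authors explicitly say they are replacing with this ``different proof''; it can plausibly be carried out, but it does not rescue (II), and (II) as written is not a valid argument.
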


\begin{proof} [Proof of Theorem~\ref{thm:B}]
    By Propositions~\ref{prop:commDiag} and \ref{prop:gamma'}, the above diagram (\ref{diag:loose}) gives the diagram (\ref{diag:tight}):
    \begin{equation*}
    \begin{tikzcd}
    \Zk \arrow[r, "\pi"] \arrow[d, "\hcZIso"', "\wr"] & \D \arrow[d, "\hcIsoGK", "\wr"'] \\
    \Lambda(\hk^*) \arrow[r, "\Res"]                            & \RingEv(\ak^*)
    \end{tikzcd}
\end{equation*}
By Proposition~\ref{prop:Res}, $\Res$ is surjective. Thus the homomorphism $\pi$ is also surjective onto $\D$.
\end{proof}

Now we give the proofs of Propositions~\ref{prop:commDiag} and \ref{prop:gamma'}.

\begin{proof} [Proof of Proposition~\ref{prop:commDiag}] 
   We prove that $\Res(\hcZIso(z)) = \hcIsoGK(\pi(z))$ for any $z\in \Zk$. Let $S$ be the set of $(\bk, \kk)$-spherical weights and let $V:=V(\lambda, \bk)$ for some $\lambda\in S$. The set $S$ is Zariski dense in $\ak^*$ by Proposition~\ref{prop:sphZariski}. 
   By Proposition~\ref{cite:scalarClaim}, $z\in \Zk \subseteq \Uk^\kk$ acts on $V^\kk$ by $\hcHomoGK(z)(\lambda+\rho)$ which is $\hcIsoGK(\pi(z))(\lambda+\rho)$.
   On the other hand, by Eq.~(\ref{eqn:ZCharHC}), $z$ acts on the entirety of $V$ by $\hcZIso(z)(\lambda+\rho_\hk)$. Hence
   \begin{equation} \label{eqn:charSame}
   \hcZIso(z)(\lambda+\rho_\hk) = \hcIsoGK(\pi (z))(\lambda+\rho).
   \end{equation}
   Since $\lambda+\rho_\hk$ vanishes on $\tk_+$ (see Eqs.~(\ref{eqn:hkRho}) and (\ref{eqn:akRho})), the left side becomes $\Res(\hcZIso(z))(\lambda+\rho)$.
   %This can be verified directly using Eq.~(\ref{eqn:resEval}) with inputs Eqs.~(\ref{eqn:hkRho}) and (\ref{eqn:akRho}).
   Thus %Eq.~(\ref{eqn:charSame}) implies
   \[
   \Res(\hcZIso(z))(\lambda+\rho) = \hcIsoGK(\pi(z))(\lambda+\rho), 
   \]
   for all $\lambda$ in $S$. Since $S$ is Zariski dense we get $\Res(\hcZIso(z)) = \hcIsoGK(\pi(z))$. 
\end{proof}

\begin{proof}[Proof of Proposition~\ref{prop:gamma'}]
    Now we show that $\Ima \hcIsoGK = \Ima \hcHomoGK = \RingEv(\ak^*)$, and $\hcIsoGK$ is injective. 
   Recall that $\cH = \cH(p, q)$. We let $\cH_d := \{\lambda\in \cH : |\lambda|\leq d\}$.
   We also let $\Pk_{2d}(\ak^*) := \{f\in \Pk(\ak^*): \deg f \leq 2d\}$, and $\RingEv_d(\ak^*) := \{f\in \RingEv(\ak^*):\deg f \leq 2d\}$.
   
   \textit{Step 1.} 
   Let $\Pk := \Sk(\pminus)\Sk(\pplus)$.
   By the Poincar\'e--Birkhoff--Witt Theorem, we have $\Uk = \Uk\kk \oplus \Pk$. This is in fact a $\kk$-module decomposition, and for the $\kk$-invariants, we have %its $\kk$-invariant counterpart is
   \[
   \Uk^\kk = (\Uk\kk)^\kk \oplus \Pk^\kk.
   \]
   By Definition~\ref{defn:ShimuraOp}, we see that $D_\lambda \in (\Wlambda^*\Wlambda)^\kk$ constitute a basis for 
   \begin{equation}\label{eqn:SPKdecomp}
   \Pk^\kk = \bigoplus_{\lambda \in \cH} (\Wlambda^*\Wlambda)^\kk
   \end{equation}
   indexed by $\lambda \in \cH$. Let $\sD_\lambda \in \quoUk = \D$ be the equivalent class of $D_\lambda\in \Uk^\kk$.
   Identifying $\Pk^\kk \cong \quoUk$, we see that $\sD_\lambda$ gives a basis for $\D$.
   
   \textit{Step 2.}
   Furthermore, we see that $\Pk^\kk = \bigoplus_{d\geq 0}(\Sk^{d}(\pminus)\Sk^{d}(\pplus))^\kk$ is graded by non-negative integers according to $|\lambda|$ in Eq.~(\ref{eqn:SPKdecomp}) (c.f. Eq.~(\ref{eqn:goodCW})) as a vector superspace.
   The basis $\{\sD_\lambda\}$ is homogeneous. We further define
   \[
   \D_d  := \Spn \{\sD_\lambda: \lambda\in \cH_d\}.
   \]
   This gives a vector superspace filtration. The reason why we consider filtration instead of grading is that $\hcIsoGK$ only preserves filtration due to the $\rho$ shift in its definition. 
   For each filtered degree, let
   \[
   \hcIsoGK_d : \D_d \rightarrow \Pk_{2d}(\ak^*).
   \]
   
   \textit{Step 3.}
   By Proposition~\ref{prop:Res}, $\Res$ is surjective onto $\RingEv(\ak^*)$, from which we have
   \[
   \RingEv(\ak^*) = \Res(\hcZIso(\Zk))
   \]
   By Proposition~\ref{prop:commDiag}, the right side is $\hcIsoGK(\pi(\Zk))$, which gives $\RingEv(\ak^*)\subseteq \Ima \hcIsoGK$. By \textit{Step 2.}, the filtered version of the assertion is
   \begin{equation} \label{eqn:filteredLambdaincluded}
   \RingEv_d(\ak^*) \subseteq \Ima \hcIsoGK_d.
   \end{equation}
   
   \textit{Step 4.}
   By Proposition~\ref{prop:noShift} (independent of this result) below, the polynomials $J_\lambda$ with $|\lambda|\leq d$ give a basis for $\RingEv_d(\ak^*)$. 
   Such $J_\lambda$ are indexed again by $\lambda \in \cH_d$, so $ \dim \RingEv_d(\ak^*) = |\cH_d|$. Also, since $\dim \D_d = |\cH_d|$, we have $\dim \Ima \hcIsoGK_d \leq |\cH_d|$.
   Therefore, by Eq.~(\ref{eqn:filteredLambdaincluded}) we obtain
   \[
   \RingEv_d(\ak^*) = \Ima \hcIsoGK_d.
   \]
   This shows $\Ima \hcIsoGK = \Ima \hcHomoGK = \RingEv(\ak^*)$, as well as the injectivity of $\hcIsoGK$.
   Therefore, $\hcIsoGK$ is bijective, from which we have $\hcIsoGK:\D \overset{\sim}{\rightarrow} \RingEv(\ak^*)$ and that $\ker \hcHomoGK = (\Uk\kk)^\kk$.
\end{proof}

%==================================================================================================
\section{Generalized Verma Modules} \label{sec:GVM}
In this section, we study certain generalized Verma modules, which we denote as $I_\lambda$. 
These are $\gk$-modules obtained by parabolic induction on $\Wlambda$ ($\lambda\in \cH(p, q)$) with a parabolic subalgebra containing $\kk$. 
%These are the $\gk$-modules induced from $\Wlambda$ ($\lambda\in \cH(p, q)$) using a parabolic subalgebra containing $\kk$ to $\gk$. 
%If we choose a Borel subalgebra of $\gk$ as the parabolic subalgebra, then we obtain the usual Verma module, which explains the name.
We show that $I_\lambda$ is spherical.
We investigate a natural grading on $I_\lambda$ and use this to show that certain central elements $Z_\mu$ act trivially on $I_\lambda$  whenever $|\lambda| \leq |\mu|$.
A strengthened result is also obtained, asserting the same vanishing action of $Z_\mu$ for any $\lambda \nsupseteq \mu$. This is Theorem~\ref{thm:C}.

\subsection{Basics} \label{subsec:GVMprops}
Consider the parabolic subalgebra $\qk := \kk \oplus \pplus$ in $\gk$. The associated set of roots is given by $\Sigma(\kk, \tk)\cup \Sigma(\pplus, \tk)$. 
Let $\Wlambda$ be as in the Cheng--Wang decomposition. We extend the $\kk$-action trivially to $\pplus$ to obtain a $\qk$-module structure on $\Wlambda$.
We define the \textit{generalized Verma module} as 
\[
I_\lambda := \Ind_{\kk+\pplus}^\gk \Wlambda = \Uk \otimes_{\Uk(\qk)} \Wlambda.
\]
By the Poincar\'e--Birkhoff--Witt theorem, we see that $\Uk = \Sk(\pminus)\Uk(\kk)\Sk(\pplus)$, and we have
\[
I_\lambda \cong \Sk(\pminus) \otimes \Wlambda 
\]
both as super vector spaces and as $\kk$-modules. Clearly, $I_\lambda$ is a weight module and the highest weight is Eq.~(\ref{eqn:naturalWt}), given with respect to $\bk^\natural$ (see Eq.~(\ref{eqn:bNatural})),
and $V_\lambda$ is the irreducible quotient of $I_\lambda$.

Next, we introduce a grading on $I_\lambda$. Recall that in the one-dimensional center of $\kk$, there is a special element $J$ (see Section~\ref{sec:setUp}) such that the Harish-Chandra decomposition $\gk = \pminus \oplus \kk \oplus \pplus$
corresponds to $(-1, 0, 1)$-eigenspaces of $\ad J$, and gives rise to the short grading. 
We may extend the action of $\ad J$ to $\Uk = \Sk(\pminus) \Uk(\kk) \Sk(\pplus)$. 
Let $\{\xi_i\}, \{x_i\}$ and $\{\eta_i\}$ be homogeneous bases for $\pminus$, $\kk$, and $\pplus$ respectively, %.
with which we write down the Poincar\'e--Birkhoff--Witt basis:
%The Poincar\'e--Birkhoff--Witt basis is then given by:
\[
\left\{\xi_{i_1}\cdots \xi_{i_m} x_{j_1}\cdots x_{j_l}\eta_{k_1}\cdots \eta_{k_n}: i_1 \leq \cdots \leq i_m, j_1\leq \cdots \leq j_l, k_1 \leq \cdots \leq k_n\right\}.
\]
where the index of an odd vector occurs at most once. 
%where a strict inequality of subindices occurs when the corresponding basis vectors are odd. 
We let 
\[
\Uk^d := \Spn\{\xi_{i_1}\cdots \xi_{i_m} x_{j_1}\cdots x_{j_l}\eta_{k_1}\cdots \eta_{k_n}: n-m = d\}.
\]

\begin{lem} \label{lem:UEAgraded}
    The universal enveloping algebra $\Uk$ is graded by $\ad J$, and $\Uk = \bigoplus_{d\in \mathbb{Z}} \Uk^d$.
\end{lem}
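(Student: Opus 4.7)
The plan is to use the fact that $\ad J$ is (the restriction to $\gk$ of) a derivation of $\Uk$, since $J \in \gk$ and the adjoint action of $\gk$ on itself extends uniquely to a derivation of $\Uk$ (which coincides with the inner derivation $u \mapsto [J,u]$). By the construction of $J$ in Subsection~\ref{subsec:realize}, the Harish-Chandra decomposition $\gk = \pminus \oplus \kk \oplus \pplus$ is precisely the $(-1, 0, +1)$-eigenspace decomposition of $\ad J$ on $\gk$. Therefore, on the chosen homogeneous bases, $[J, \xi_i] = -\xi_i$, $[J, x_j] = 0$, and $[J, \eta_k] = \eta_k$.

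Next I would verify that every PBW monomial is an $\ad J$-eigenvector with the expected eigenvalue. Applying the derivation property of $\ad J$ iteratively,
\[
[J, \xi_{i_1}\cdots \xi_{i_m} x_{j_1}\cdots x_{j_l}\eta_{k_1}\cdots \eta_{k_n}] = (-m + 0 + n)\,\xi_{i_1}\cdots \xi_{i_m} x_{j_1}\cdots x_{j_l}\eta_{k_1}\cdots \eta_{k_n},
\]
so each PBW basis element of $\Uk^d$ lies in the $d$-eigenspace of $\ad J$. Because the PBW monomials form a basis of $\Uk$ and the defining subsets for distinct $d$ are disjoint, we obtain the vector space decomposition $\Uk = \bigoplus_{d \in \mathbb{Z}} \Uk^d$, with $\Uk^d$ equal to the $d$-eigenspace of $\ad J$.

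Finally, I would check the multiplicative compatibility $\Uk^a \cdot \Uk^b \subseteq \Uk^{a+b}$ from the derivation identity: for $u \in \Uk^a$ and $v \in \Uk^b$,
\[
[J, uv] = [J,u]v + u[J,v] = (a+b)\,uv,
\]
so $uv$ lies in the $(a+b)$-eigenspace, which by the previous step equals $\Uk^{a+b}$. This establishes that the decomposition is a $\mathbb{Z}$-grading of $\Uk$ induced by $\ad J$. There is no real obstacle here; the only subtlety is making sure the two candidate definitions of $\Uk^d$ (combinatorial via PBW versus spectral via $\ad J$) coincide, which is exactly what the eigenvector computation on PBW monomials delivers.
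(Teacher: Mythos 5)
Your proof is correct and follows essentially the same route as the paper's: both establish that each PBW monomial is an $\ad J$-eigenvector with eigenvalue $n-m$, conclude the direct sum decomposition from the PBW basis, and derive multiplicative compatibility from the fact that $\ad J$ is a derivation (the paper phrases this as "multiplication respects the Lie superbracket, which respects the short grading"). Your write-up is a bit more explicit about the derivation identity, but there is no substantive difference in approach.
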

\begin{proof}
    The action of $J$ on the basis $\xi_{i_1}\cdots \xi_{i_m} x_{j_1}\cdots x_{j_l}\eta_{k_1}\cdots \eta_{k_n}$ is given by the scalar $n-m = d$ in $\Uk^d$. The bases for $\Uk^d$ for all $d\geq 0$ give a partition of the Poincar\'e--Birkhoff--Witt basis, and the sum is hence direct.
    Furthermore, the multiplication in $\Uk$ respects the Lie superbracket, which in turn respects the short grading on $\gk$. Therefore the grading is well-defined.
\end{proof}

Let $l = |\lambda|$. The module $I_\lambda$ is also endowed with the $\ad J$ grading. Explicitly, 
\[
I_\lambda^{l-d} := \Sk^d(\pminus) \otimes \Wlambda
\]
has degree $l-d$ for $d \geq 0$.
In particular, its top homogeneous component is $\C\otimes \Wlambda$ of degree $l$, isomorphic to $\Wlambda$.
By Lemma~\ref{lem:UEAgraded}, since the $\gk$ acts by left multiplication on $I_\lambda$, we have $\Uk^d.I_\lambda^m \subseteq I_\lambda^{d+m}$. This leads to the following lemma.

\begin{lem} \label{lem:GVMdeg}
    If $u\in \Sk^k(\pplus)$, then $u.I_\lambda^m \subseteq I_\lambda^{k+m}$.
\end{lem}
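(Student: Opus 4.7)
The plan is to recognize this lemma as essentially a corollary of the grading compatibility established in Lemma~\ref{lem:UEAgraded} together with the paragraph that immediately precedes the statement. The Harish-Chandra decomposition $\gk = \pminus \oplus \kk \oplus \pplus$ is the $(-1,0,+1)$-eigenspace decomposition of $\ad J$, so every element of $\pplus$ has $\ad J$-weight $+1$. A product of $k$ elements of $\pplus$ therefore has $\ad J$-weight $+k$, and since $\Sk(\pplus)$ is supercommutative we obtain the inclusion $\Sk^k(\pplus) \subseteq \Uk^k$ (in the notation of Lemma~\ref{lem:UEAgraded}, $n = k$ and $m = 0$ so $n - m = k$). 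Applying the already-noted relation $\Uk^d.I_\lambda^m \subseteq I_\lambda^{d+m}$ with $d = k$ then yields exactly the claim $u.I_\lambda^m \subseteq I_\lambda^{k+m}$.

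If one wants a self-contained verification, it can be done by direct computation in the PBW realization $I_\lambda \cong \Sk(\pminus) \otimes \Wlambda$. Write $u = \eta_1 \cdots \eta_k$ with $\eta_i \in \pplus$, and pick a homogeneous $v = (\xi_{i_1} \cdots \xi_{i_{l-m}}) \otimes w \in I_\lambda^m$. To evaluate $u \cdot v$, move each $\eta_i$ to the right past the $\xi$'s using $[\pplus, \pminus] \subseteq \kk$; each swap produces either a straight exchange term (unchanged total $\ad J$-weight) or a commutator term in which one $\pplus$ and one $\pminus$ factor are replaced by a single $\kk$ factor (again unchanged $\ad J$-weight). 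Because $\pplus$ acts trivially on $\Wlambda$ in the $\qk$-module structure defining $I_\lambda$, any term in which some $\eta_i$ survives to the far right contributes zero; the only surviving monomials sit in $\Sk^{*}(\pminus) \otimes \Uk(\kk) \otimes \Wlambda$, and since the total $\ad J$-weight has been preserved throughout, they lie in $I_\lambda^{k+m}$.

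There is no real obstacle here: the only substantive content is that the $\ad J$-grading of $\Uk$ descends through the tensor product defining $I_\lambda$, and that fact was essentially proved in Lemma~\ref{lem:UEAgraded}. The lemma is being isolated because in the next stage of the argument (estimating the action of $Z_\mu$ on $I_\lambda$ by tracking degrees through the factorization $\Uk = \Sk(\pminus)\Uk(\kk)\Sk(\pplus)$) it is the $\Sk^k(\pplus)$-case that is needed, and calling it out separately keeps the subsequent bookkeeping clean.
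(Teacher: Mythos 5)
Your proposal is correct and matches the paper's approach exactly: the paper proves the lemma precisely by noting that $\Sk^k(\pplus)\subseteq\Uk^k$ and invoking the inclusion $\Uk^d.I_\lambda^m\subseteq I_\lambda^{d+m}$, which follows from Lemma~\ref{lem:UEAgraded}. Your second, hands-on PBW verification is a fine optional expansion but is not needed given the first paragraph.
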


Next, we show that $I_\lambda$ is spherical.

\begin{prop} \label{prop:GVMsph}
The module $I_\lambda$ is spherical. Furthermore, wehave $\dim I_\lambda^\kk = 1$ and $I_\lambda^\kk \subseteq I_\lambda^0$. 
\end{prop}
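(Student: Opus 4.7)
The plan is to exploit the $\kk$-module isomorphism $I_\lambda \cong \Sk(\pminus) \otimes \Wlambda$ already recorded in the excerpt and to combine it with the Cheng--Wang decomposition (Proposition~\ref{prop:goodCW}) together with a super Schur's lemma argument for $\kk$-modules of Type $\texttt{M}$.

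First I would decompose $\Sk(\pminus)$ as a $\kk$-module using Proposition~\ref{prop:goodCW}:
\[
\Sk(\pminus) = \bigoplus_{\mu \in \cH(p, q)} \Wmu^*,
\]
where $\Wmu^* \subseteq \Sk^{|\mu|}(\pminus)$. Tensoring with $\Wlambda$ gives a $\kk$-module decomposition
\[
I_\lambda \cong \bigoplus_{\mu \in \cH(p, q)} \Wmu^* \otimes \Wlambda,
\]
where $\Wmu^*\otimes\Wlambda$ lies in the graded piece $I_\lambda^{|\lambda|-|\mu|}$ by Lemma~\ref{lem:GVMdeg} and the definition of $I_\lambda^{l-d}=\Sk^d(\pminus)\otimes \Wlambda$.

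Next I would compute $(\Wmu^*\otimes\Wlambda)^\kk$. Writing $\kk = \gl(p|q)^{(1)}\oplus \gl(p|q)^{(2)}$ and recalling $\Wlambda = L(\lambda^\natural)\otimes L^*(\lambda^\natural)$ (with $\gl(p|q)^{(1)}$ acting on $L(\lambda^\natural)$ and $\gl(p|q)^{(2)}$ acting on $L^*(\lambda^\natural)$), one has
\[
\Wmu^* \otimes \Wlambda \;\cong\; \bigl(L^*(\mu^\natural) \otimes L(\lambda^\natural)\bigr) \boxtimes \bigl(L(\mu^\natural) \otimes L^*(\lambda^\natural)\bigr).
\]
Taking $\kk$-invariants factors through each summand of $\kk$ and identifies the invariants with
\[
\Hom_{\gl(p|q)}\!\bigl(L(\mu^\natural), L(\lambda^\natural)\bigr) \;\otimes\; \Hom_{\gl(p|q)}\!\bigl(L(\lambda^\natural), L(\mu^\natural)\bigr).
\]
Since $L(\lambda^\natural)$ and $L(\mu^\natural)$ are of Type $\texttt{M}$, Schur's lemma yields that each Hom space is $\C$ when $\mu = \lambda$ and $0$ otherwise. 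Hence
\[
\bigl(\Wmu^*\otimes\Wlambda\bigr)^\kk \;=\; \begin{cases} \C & \mu = \lambda, \\ 0 & \mu\neq\lambda.\end{cases}
\]

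Combining these two points, $I_\lambda^\kk = (\Wlambda^* \otimes \Wlambda)^\kk$ is one-dimensional, so $I_\lambda$ is spherical and $\dim I_\lambda^\kk = 1$. Moreover the unique invariant lives in $\Wlambda^*\otimes\Wlambda \subseteq \Sk^{|\lambda|}(\pminus)\otimes \Wlambda = I_\lambda^{|\lambda|-|\lambda|} = I_\lambda^0$, giving the final claim $I_\lambda^\kk \subseteq I_\lambda^0$. The only delicate point is verifying the super Schur's lemma step—namely, that the Type $\texttt{M}$ property of $L(\lambda^\natural)$ passes to both $\Wlambda$ and to the relevant Hom spaces over the super Lie algebra $\gl(p|q)$—but this is a standard consequence of the Type $\texttt{M}$ assertion invoked just before Proposition~\ref{prop:goodCW}.
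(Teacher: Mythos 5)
Your proposal is correct and follows essentially the same route as the paper: decompose $I_\lambda \cong \Sk(\pminus)\otimes\Wlambda$ into $\bigoplus_\mu \Wmu^*\otimes\Wlambda$ via Cheng--Wang, then pick out the unique invariant via Schur's lemma for Type $\texttt{M}$ modules, landing in the degree-$0$ piece. The only difference is that the paper applies Schur's lemma directly to $\Wmu^*\otimes\Wlambda \cong \Hom(\Wmu,\Wlambda)$ as a $\kk$-module, whereas you unpack $\kk = \gl(p|q)\oplus\gl(p|q)$ and apply Schur's lemma factorwise to $L(\mu^\natural)$ and $L(\lambda^\natural)$---a more explicit justification of the same step.
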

\begin{proof}
As a $\kk$-module, we have
$I_\lambda \cong \bigoplus_{\nu\in \cH} W_\nu^* \otimes \Wlambda$ and each component can be identified as $\Hom(W_\nu, \Wlambda)$.
Only the degree 0 component $\Wlambda^* \otimes \Wlambda \cong \End \left(\Wlambda\right)$ has a one-dimensional $\kk$-invariant subspace by Schur's Lemma for Type \texttt{M} modules. 
Hence $I_\lambda$ is spherical with a unique up to constant spherical vector. 
%The projection $I_\lambda \rightarrow  \left(\Wlambda^*\otimes \Wlambda\right)^\kk$ is a $\kk$-invariant map. Composing it with the canonical pairing on $\Wlambda^*\otimes \Wlambda$, we get a spherical functional $\phi$ on $I_\lambda$.
\end{proof}

%By the above Proposition, $\omega \neq 0$ in $I_\lambda^\kk$ is unique up to constant. Let us fix such $\omega \in I_\lambda^\kk$ in the following discussion.
By the above Proposition, we fix a non-zero $\omega \in I_\lambda^\kk$ in the following discussion.

\subsection{Vanishing actions} \label{subsec:vanAct}
Recall that the supersymmetric Shimura operator $D_\mu$ in $\Uk^\kk$ is defined as the image of $1_\mu$ under the following composition of maps (Definition~\ref{defn:ShimuraOp})
\begin{align*}
        \left(\Wmu^* \right. \otimes \left.\Wmu \right)^\kk \hookrightarrow \left( \Sk(\pminus)\otimes \Sk(\pplus) \right)^\kk  \rightarrow  &\Uk^\kk \\
        1_\mu  \xmapsto{\hphantom{ 
        \left(\Wmu^* \right. \otimes \left.\Wmu \right)^\kk \hookrightarrow \left( \Sk(\pminus)\otimes \Sk(\pplus) \right)^\kk  \rightarrow  \Uk^\kk
        } }  
        & D_\mu
\end{align*}
In particular $D_\mu \in \Wmu^* \Wmu$.
If we take a homogeneous basis $\{\eta^\mu_i\}$ for $\Wmu$
and its dual basis $\{\xi^{-\mu}_i\}$ for $\Wmu^*$, then $D_\mu$ can be written as
\begin{equation} \label{eqn:Dmu}
    D_\mu = \sum \xi^{-\mu}_i \eta^\mu_i.
\end{equation}
%In particular, we see that the order $\deg D_\mu \leq 2|\mu|$.

Let $m = |\mu|$ and $l = |\lambda|$. We have the following ``vanishing" actions (Propositions~\ref{prop:l<m} and \ref{prop:l=m}).

\begin{prop} \label{prop:l<m}
    If $l<m$, then $D_\mu.v^0 = 0$ for any $v^0\in I_\lambda^0$.
\end{prop}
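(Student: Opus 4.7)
The plan is to use the $\ad J$-grading on $I_\lambda$ established in Lemma~\ref{lem:GVMdeg} together with the explicit form of $D_\mu$ given in Eq. (\ref{eqn:Dmu}). The key observation is that the grading on $I_\lambda = \Sk(\pminus) \otimes \Wlambda$ has $I_\lambda^{l-d} = \Sk^d(\pminus) \otimes \Wlambda$, so the nonzero graded pieces live in degrees at most $l = |\lambda|$; in particular, $I_\lambda^d = 0$ for every $d > l$.

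First, I would identify the degree of each factor in $D_\mu = \sum \xi_i^{-\mu} \eta_i^\mu$. Since $\Wmu \subseteq \Sk^m(\pplus)$, each $\eta_i^\mu$ lies in $\Sk^m(\pplus) \subseteq \Uk^m$, and dually each $\xi_i^{-\mu}$ lies in $\Sk^m(\pminus) \subseteq \Uk^{-m}$. By Lemma~\ref{lem:GVMdeg}, applying $\eta_i^\mu$ raises the $J$-degree by exactly $m$, so for any $v^0 \in I_\lambda^0$ we have $\eta_i^\mu . v^0 \in I_\lambda^m$.

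Next, under the hypothesis $l < m$, the target space $I_\lambda^m$ is the zero subspace, because $I_\lambda^m = \Sk^{l-m}(\pminus) \otimes \Wlambda$ and $l-m < 0$ makes no sense. Hence $\eta_i^\mu . v^0 = 0$ for every $i$, and consequently $D_\mu . v^0 = \sum \xi_i^{-\mu}(\eta_i^\mu . v^0) = 0$, yielding the claim.

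There is no significant obstacle here; the argument is essentially bookkeeping with the $\ad J$-grading. The only subtle point is to make sure the grading on $I_\lambda$ is correctly normalized so that the top component $\C \otimes \Wlambda$ sits at degree $l$, and that the $\pplus$-action raises, while the $\pminus$-action lowers, the degree by the expected amount; but this is exactly the content of Lemma~\ref{lem:GVMdeg} applied to the PBW realization $I_\lambda \cong \Sk(\pminus) \otimes \Wlambda$.
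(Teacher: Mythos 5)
Your argument is correct and follows essentially the same route as the paper's proof: both use the $\ad J$-grading together with Lemma~\ref{lem:GVMdeg}, note that $\eta_i^\mu \in \Sk^m(\pplus)$ raises the degree by $m$, and conclude from $l<m$ that $I_\lambda^m = 0$. The only cosmetic difference is that you apply each $\eta_i^\mu$ to $v^0$ individually before multiplying by $\xi_i^{-\mu}$, whereas the paper states the conclusion directly for $D_\mu.I_\lambda^0$; the content is identical.
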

\begin{proof}
    In $D_\mu$, every $\eta^\mu_i$ in Eq.~(\ref{eqn:Dmu}) has degree $m$ as an element in $\Sk^m(\pplus)$. Hence $\eta^\mu_i.I_\lambda^0 \subseteq I_\lambda^m$ by Lemma \ref{lem:GVMdeg}. The top degree of $I_\lambda$ is $l$. But $l<m$. Thus $D_\mu.I_\lambda^0 = I_\lambda^m=\{0\}$.
\end{proof}

Viewing $\Wmu \subseteq \Uk$, we have a $\kk$-action map $R: \Wmu \otimes I_\lambda \rightarrow I_\lambda$ which is given by $\eta \otimes v \mapsto \eta.v$ for $\eta \in \Wmu$ and $v \in I_\lambda$.
% Let $M$ denote the image of $R$ when restricted to $\Wmu \otimes I_\lambda^\kk$.
% \begin{lem} \label{lem:homoImage}
%     The subspace $M$ is either $\{0\}$ or a $\kk$-submodule isomorphic to $\Wmu$.
% \end{lem}
% \begin{proof} The map $R: \Wmu \otimes I_\lambda^\kk   \twoheadrightarrow M$ is a $\kk$-module map. The domain is isomorphic to $\Wmu$, which is simple. By Schur's lemma $R$ is either $0$ or a $\kk$-isomorphism.
% \end{proof}

\begin{prop} \label{prop:l=m}
    If $l = m$ but $\lambda \neq \mu$, then $D_\mu.\omega = 0$.
\end{prop}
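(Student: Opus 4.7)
The plan is to exploit the $\ad J$ grading together with the uniqueness of the spherical vector in $I_\lambda$ and standard Schur-type arguments for irreducible $\kk$-modules of Type $\texttt{M}$.

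First, I would recall the set-up: $\omega \in I_\lambda^\kk$ lies in $I_\lambda^0$ by Proposition~\ref{prop:GVMsph}, and by Eq.~(\ref{eqn:Dmu}) we may write $D_\mu = \sum_i \xi^{-\mu}_i \eta^\mu_i$ with $\eta^\mu_i \in \Wmu \subseteq \Sk^m(\pplus)$. Since each $\eta^\mu_i$ lives in degree $m$ with respect to the $\ad J$ grading of $\Uk$, Lemma~\ref{lem:GVMdeg} gives $\eta^\mu_i.\omega \in I_\lambda^{0+m} = I_\lambda^m$. The key hypothesis $l = m$ now forces $I_\lambda^m = I_\lambda^l$, which is the top homogeneous component $\C\otimes \Wlambda$, irreducible as a $\kk$-module and isomorphic to $\Wlambda$.

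Next, I would invoke Lemma~\ref{lem:homoImage}: the map $R_\omega : \Wmu \to I_\lambda$ defined by $\eta \mapsto \eta.\omega$ is a homomorphism of $\kk$-modules. Combining with the previous step, $R_\omega$ actually lands inside the top piece $\Wlambda$, so it defines a $\kk$-module map $R_\omega : \Wmu \to \Wlambda$. Since $\Wmu$ and $\Wlambda$ are irreducible of Type $\texttt{M}$ (as discussed in Subsection~\ref{subsec:superSO}), and since $\mu \neq \lambda$ implies $\Wmu \not\cong \Wlambda$, Schur's lemma forces $R_\omega \equiv 0$. That is, $\eta^\mu_i.\omega = 0$ for every $i$.

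Finally, I would conclude $D_\mu.\omega = \sum_i \xi^{-\mu}_i . (\eta^\mu_i.\omega) = 0$. I do not anticipate a serious obstacle here: the entire argument is a short assembly of the graded structure of $I_\lambda$, the sphericity of $\omega$, and Schur's lemma for irreducible $\kk$-modules. The only place requiring mild care is verifying that the image of $R_\omega$ is contained in a single irreducible $\kk$-summand of $I_\lambda$ (namely $\Wlambda \cong I_\lambda^l$), which is immediate from the degree-counting in Lemma~\ref{lem:GVMdeg}.
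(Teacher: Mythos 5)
Your proof is correct and takes essentially the same route as the paper: use Lemma~\ref{lem:GVMdeg} to push $\eta^\mu_i.\omega$ into the top piece $I_\lambda^l \cong \Wlambda$, apply Lemma~\ref{lem:homoImage} to get a $\kk$-homomorphism $\Wmu \to \Wlambda$, and invoke Schur's lemma for non-isomorphic irreducibles. In fact your write-up is slightly cleaner than the paper's, which at the corresponding step writes ``$I_\lambda^l = \{0\}$'' where it evidently means that the homomorphic image $S$ of $\Wmu$ inside $I_\lambda^l$ is $\{0\}$ --- exactly the Schur's-lemma step you spell out.
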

\begin{proof} The action map $R$ maps $\Wmu \otimes I_\lambda^\kk\cong W_\mu$ to $I_\lambda^l = \C\otimes \Wlambda\cong \Wlambda $. Since $\lambda \neq \mu$, this map is $0$ by Schur's lemma. It follows that the iterated action map on $\Wmu^*\otimes \Wmu \otimes I_\lambda^\kk$ is $0$; hence $D_\mu.\omega =0.$
\end{proof}

\begin{cor} \label{cor:vanishingAction}
    For any $\lambda \in \cH(p, q)$ such that $|\lambda| \leq |\mu|$, $\lambda\neq \mu$, we have $D_\mu.\omega = 0$ in $I_\lambda$.
\end{cor}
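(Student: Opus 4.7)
The corollary is essentially a packaging of the two propositions that precede it, so the plan is short. The case split is dictated by the hypothesis $|\lambda|\le|\mu|$ together with $\lambda\ne\mu$: either $|\lambda|<|\mu|$ strictly, or $|\lambda|=|\mu|$ with $\lambda\ne\mu$. My plan is to handle the two cases by invoking Proposition~\ref{prop:l<m} and Proposition~\ref{prop:l=m} respectively, using Proposition~\ref{prop:GVMsph} as the glue that lets the first proposition speak about the spherical vector.

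First, I would recall that by Proposition~\ref{prop:GVMsph}, $\omega\in I_\lambda^\kk\subseteq I_\lambda^0$, so $\omega$ is in particular an element of the degree-$0$ piece for the $\ad J$-grading on $I_\lambda$. In the case $l:=|\lambda|<m:=|\mu|$, Proposition~\ref{prop:l<m} applies with $v^0=\omega$ and yields $D_\mu.\omega=0$ directly. In the remaining case $l=m$ with $\lambda\ne\mu$, Proposition~\ref{prop:l=m} gives $D_\mu.\omega=0$ at once. Since these two cases exhaust the hypothesis $|\lambda|\le|\mu|$, $\lambda\ne\mu$, the corollary follows.

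There is no real obstacle here; the only thing to double-check is that the two propositions together cover exactly the hypothesis of the corollary, i.e.\ that we do not need a separate argument when $\omega$ is not already inside $I_\lambda^0$. That concern is dispelled by the second assertion of Proposition~\ref{prop:GVMsph}, which places $I_\lambda^\kk$ inside $I_\lambda^0$, so the input of Proposition~\ref{prop:l<m} is indeed $v^0=\omega$ as required.
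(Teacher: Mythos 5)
Your proof is correct and matches the paper's own argument: both split into the cases $|\lambda|<|\mu|$ and $|\lambda|=|\mu|$, invoking Proposition~\ref{prop:l<m} and Proposition~\ref{prop:l=m} respectively, with Proposition~\ref{prop:GVMsph} supplying $\omega\in I_\lambda^\kk\subseteq I_\lambda^0$ so the first proposition applies.
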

\begin{proof}
    It follows from Propositions \ref{prop:GVMsph} and \ref{prop:l<m} for the case $|\lambda|<|\mu|$, and Proposition~\ref{prop:l=m} for the case $|\lambda| = |\mu|$.
\end{proof}

Recall $\pi(D_\mu) = \sD_\mu$.
By Theorem~\ref{thm:B}, $\pi$ is surjective. 
Then it is possible to find $Z_\mu\in \Zk$ such that $\pi(Z_\mu) = \sD_\mu$. 
We show that $Z_\mu$ acts by 0 on $I_\lambda$ when $\lambda$ satisfies certain conditions.

\begin{prop} \label{prop:weakC}
    The central element $Z_\mu$ acts on $I_\lambda$ by $0$ when  $|\lambda| \leq |\mu|$, and $\lambda \neq \mu$.
\end{prop}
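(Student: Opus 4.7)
The plan is to leverage two facts already established in the excerpt: first, that $Z_\mu$ and $D_\mu$ differ by something in $(\Uk\kk)^\kk$, since both map to $\sD_\mu$ under $\pi$; second, that $I_\lambda$ is spherical with a one-dimensional space of $\kk$-invariants spanned by a vector $\omega$ (Proposition~\ref{prop:GVMsph}), and that $D_\mu.\omega=0$ whenever $|\lambda|\leq|\mu|$ and $\lambda\neq\mu$ (Corollary~\ref{cor:vanishingAction}). The strategy is therefore to reduce the action of $Z_\mu$ on all of $I_\lambda$ to its action on the single vector $\omega$.

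First I would observe that $I_\lambda$ is a highest weight module with respect to the Borel subalgebra $\bk^\natural=\bk^{\std}\oplus\bk^{\opp}\oplus\pplus$ of $\gk$: the highest weight vector $v_\lambda^+\in\Wlambda\subseteq I_\lambda$ is annihilated by $\pplus$ (which acts trivially on $\Wlambda$ in the definition of $I_\lambda$) and by the positive part of $\bk^{\std}\oplus\bk^{\opp}$ (being a $\kk$-highest weight vector in $\Wlambda$). Hence any $z\in\Zk$ acts on $v_\lambda^+$ by the central character $\chi_{\lambda^\natural_\tk}(z)$, and by centrality together with the fact that $v_\lambda^+$ generates $I_\lambda$ as a $\Uk$-module, $z$ acts as the scalar $\chi_{\lambda^\natural_\tk}(z)$ on the entire module $I_\lambda$. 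In particular $Z_\mu$ acts on $I_\lambda$ by some scalar $c\in\C$, and it suffices to verify that $c=0$ under the hypothesis $|\lambda|\leq|\mu|$, $\lambda\neq\mu$.

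Next I would evaluate this scalar on $\omega$. Since $\pi(Z_\mu)=\sD_\mu=\pi(D_\mu)$ in $\D=\Uk^\kk/(\Uk\kk)^\kk$, there exists $y\in(\Uk\kk)^\kk\subseteq\Uk\kk$ with $Z_\mu=D_\mu+y$. Because $\omega\in I_\lambda^\kk$ we have $\kk.\omega=0$ and hence $\Uk\kk.\omega=0$, so $y.\omega=0$. Therefore
\[
Z_\mu.\omega = D_\mu.\omega + y.\omega = D_\mu.\omega.
\]
By Corollary~\ref{cor:vanishingAction}, the right-hand side vanishes under the hypothesis $|\lambda|\leq|\mu|$, $\lambda\neq\mu$, so $Z_\mu.\omega=0$. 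Combined with the previous step, $c\omega=0$, and since $\omega\neq 0$ we conclude $c=0$, i.e. $Z_\mu$ acts as the zero operator on all of $I_\lambda$.

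There is not really a main obstacle here: the essential technical work has already been packaged into Theorem~\ref{thm:B} (existence of $Z_\mu$), Proposition~\ref{prop:GVMsph} (uniqueness of the spherical line), and Corollary~\ref{cor:vanishingAction} (vanishing on the spherical vector). The only subtle point to verify carefully is that $I_\lambda$ admits a central character, which as indicated follows from $v_\lambda^+$ being a genuine $\bk^\natural$-highest weight vector that cyclically generates $I_\lambda$; once this is noted, the proposition falls out by comparing $Z_\mu.\omega$ and $D_\mu.\omega$ modulo $\Uk\kk$.
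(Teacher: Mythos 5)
Your proof is correct and follows essentially the same route as the paper: both reduce to the fact that $Z_\mu$ acts by a central-character scalar on the highest weight module $I_\lambda$, identify $Z_\mu.\omega$ with $D_\mu.\omega$ because $Z_\mu-D_\mu\in(\Uk\kk)^\kk$ annihilates the spherical vector $\omega$, and then invoke Corollary~\ref{cor:vanishingAction}. You merely spell out the $Z_\mu = D_\mu + y$ step a bit more explicitly than the paper's phrasing about the action of $\Uk^\kk$ descending to $\D$.
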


\begin{proof}
By Lemma~\ref{lem:ukScalar}, $D_\mu$ acts on $\omega \in I_\lambda$ by a scalar $c_\mu(\lambda)$ for any $\lambda$. 
On the other hand, the action of $X\in \Uk^\kk$ on a spherical vector $v^\kk$ in a $\gk$-module descends to an action of $\D = \quoUk$ by setting $\pi(X).v^\kk := X.v^\kk$. 
%for $\pi(X) = X+\Uk^\kk\cap \Uk\kk \in \D$.
Hence, in $I_\lambda$, there is no ambiguity of writing 
\begin{equation} \label{eqn:sameAction}
    c_\mu(\lambda)\omega = D_\mu.\omega = \sD_\mu.\omega = Z_\mu.\omega.
\end{equation}
Therefore, by Corollary~\ref{cor:vanishingAction},
\begin{equation} \label{eqn:cIsZero}
    c_{\mu}(\lambda) = 0, \; \text{if } |\lambda| \leq |\mu|, \lambda \neq \mu.
\end{equation}
Since $Z_\mu \in \Zk$ acts on the entirety of a highest weight module by a scalar and $I_\lambda$ is a highest weight module, such scalar has to be $c_\mu(\lambda)$ which vanishes according to Eq.~(\ref{eqn:cIsZero}).
\end{proof}

\subsection{A stronger result} What is proved in the previous subsection turns out to be enough for this paper (Theorem~\ref{thm:A}). Nonetheless, we give a better description of such vanishing actions, proving Theorem~\ref{thm:C}.

First, we recall the following ``weight decomposition" proposition, parallel to Proposition~3.2 in \cite{K10Tensor} (c.f. \cite[Theorem~5.1]{Ko59Weight}). 
%For a weight module $V$, we denote the set of weights in $V$ as $\supp{V}$.

\begin{prop} \label{prop:wtDecomp}
 Let $\Lk$ be a Lie (super)algebra with Borel subalgebra $\bk=\hk+\nk$ and triangular decomposition $\Lk= \nk^{-}+\bk$. Let $L_1,L_2,V$ be $\Lk$-modules such that $L_1,L_2$ are highest weight modules with $\bk$-highest weights $\kappa_1, \kappa_2\in \hk^*$, and $L_2$ is irreducible and finite dimensional. If $\Hom_{\Lk} \left(L_1, L_2\otimes V \right) \neq \{0\}$, then $\Hom_\hk \left(\C_{\kappa_1-\kappa_2}, V  \right) \neq \{0\}$, that is, $\kappa_1-\kappa_2$ is an $\hk$-weight of $V$.
\end{prop}
\begin{proof} We have the following inclusions:
\begin{align*}
    \Hom_{\Lk} \left(L_1, L_2\otimes V \right)
    &\hookrightarrow \Hom_{\bk} \left(\mathbb{C}_{\kappa_1}, L_2 \otimes V\right) \cong  \Hom_{\bk} \left(\mathbb{C}_{\kappa_1} \otimes L_2^*,  V\right)\\
    &\hookrightarrow \Hom_\hk \left(\C_{\kappa_1} \otimes \C_{-\kappa_2}, V  \right) \cong \Hom_\hk \left(\C_{\kappa_1-\kappa_2}, V  \right).
\end{align*}
The first inclusion follows since the one-dimensional $\bk$-module $\mathbb{C}_{\kappa_1}$ generates $L_1$ as an $\nk^{-}$ module, while the second follows since the $\hk$-module $\mathbb{C}_{-\kappa_2}$ generates $L_2^*$ as an $\nk$-module.
%If $\Hom_{\Lk} \left(L_1, L_2\otimes V \right) \neq \{0\}$, then $\Hom_\hk \left(\C_{\kappa_1} \otimes \C_{-\kappa_2}, V  \right)$. Then the results follows since $\C_{\kappa_1}\otimes \C_{-\kappa_2}$ has weight $\kappa_1-\kappa_2$.
\end{proof}

We would like to show the following branching statement, i.e., $\Wmu$ occurs in $I_\lambda$ as a homomorphic image \fixIt{only} if $\lambda_i \geq \mu_i$, i.e. $\lambda\supseteq \mu$.
%This result is similar to the one regarding the irreducible representation $V(\lambda^\natural)$ in \cite{Z22}.

\begin{prop} \label{prop:branchI(l)}
    If $\Hom_\kk \left(\Wmu, I_\lambda\right) \neq \{0\}$, then $\lambda \supseteq \mu$.
\end{prop}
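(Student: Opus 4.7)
The plan is to combine the Cheng--Wang decomposition of $\Sk(\pminus)$ with the weight-subtraction statement in Proposition~\ref{prop:wtDecomp}, and then translate the resulting weight condition into the partition containment $\lambda \supseteq \mu$.

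As $\kk$-modules, $I_\lambda \cong \Sk(\pminus) \otimes \Wlambda \cong \bigoplus_{\nu \in \cH(p, q)} W_\nu^* \otimes \Wlambda$ by Proposition~\ref{prop:goodCW}. Hence $\Hom_\kk(\Wmu, I_\lambda) \neq 0$ forces $\Hom_\kk(\Wmu, W_\nu^* \otimes \Wlambda) \neq 0$ for some $\nu \in \cH(p, q)$. Since $\Wmu$ and $\Wlambda$ are finite-dimensional irreducible Type \texttt{M} $\kk$-modules with highest weights $\mu^\natural_\tk$ and $\lambda^\natural_\tk$ as in Eq.~\eqref{eqn:naturalWt} (with respect to $\bk^{\std} \oplus \bk^{\opp}$), Proposition~\ref{prop:wtDecomp} applies with $L_1 = \Wmu$, $L_2 = \Wlambda$, $V = W_\nu^*$ and yields that $\xi := \mu^\natural_\tk - \lambda^\natural_\tk$ is a weight of $W_\nu^* \subseteq \Sk(\pminus)$.

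Next, from the explicit embedding in Eq.~\eqref{eqn:emb}, every $\tk$-weight of $\pminus$ has the shape $\sigma^- - \sigma^+$ with $\sigma^+ \in \{\epu^+_i, \dau^+_j\}$ and $\sigma^- \in \{\epu^-_i, \dau^-_j\}$; consequently every weight of $\Sk(\pminus)$ has non-positive $\epu^+_i$- and $\dau^+_j$-coefficients. Reading off the coefficients of $\xi$ via Eq.~\eqref{eqn:naturalWt} yields
\[
\mu_i \le \lambda_i \quad (1 \le i \le p), \qquad \langle \mu'_j - p \rangle \le \langle \lambda'_j - p \rangle \quad (1 \le j \le q).
\]

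The main remaining step, which I expect to be the most delicate bookkeeping, is verifying that these two families of inequalities for $(p, q)$-hook partitions are together equivalent to $\mu \subseteq \lambda$. The first gives containment in the first $p$ rows. For any box of $\mu$ in a row $i > p$, the hook condition forces $\mu_i \le q$, so the box lies in some column $j \le q$ with $\mu'_j > p$; the second inequality then gives $\lambda'_j \ge \mu'_j > p$, placing the box inside $\lambda$. A short contrapositive argument, splitting on whether $\lambda'_j$ exceeds $p$, makes this rigorous and completes the proof.
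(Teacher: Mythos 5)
Your proof is correct and follows the paper's approach closely: both apply Proposition~\ref{prop:wtDecomp} to the $\kk$-module identification $I_\lambda \cong \Sk(\pminus)\otimes\Wlambda$ (after braiding) and then analyze the resulting weight $\mu^\natural_\tk - \lambda^\natural_\tk$ of $\Sk(\pminus)$. The paper takes $V=\Sk(\pminus)$ directly rather than first decomposing it into $W_\nu^*$ via Cheng--Wang, and phrases the non-positivity via the strongly orthogonal roots $\HCso\subseteq\Sigma(\pplus\ev,\tk)$, but this amounts to the same coefficient check on $\epu^+_i,\dau^+_j$ that you perform; your explicit verification that $\mu_i\le\lambda_i$ ($i\le p$) together with $\langle\mu'_j-p\rangle\le\langle\lambda'_j-p\rangle$ ($j\le q$) forces $\mu\subseteq\lambda$ for hook partitions usefully spells out a step the paper compresses into ``Equivalently, $\lambda\supseteq\mu$.''
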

\begin{proof}
    The $\kk$-module structure on $I_\lambda$ is given by $\Sk(\pminus) \otimes \Wlambda$.
    Let us denote the representation map on $\Sk(\pminus) \otimes \Wlambda$ (respectively $\Wlambda \otimes \Sk(\pminus)$) as $\pi$ (respectively $\pi'$).
    The braiding map $s: \Sk(\pminus) \otimes \Wlambda \rightarrow \Wlambda \otimes \Sk(\pminus)$ is a $\kk$-module isomorphism, meaning that if $\Wmu$ occurs in $I_\lambda$, then it also occurs in $\Wlambda \otimes \Sk(\pminus)$. 
\commentout{that is, $s$ intertwines with the representation maps:
    \[
    \pi'(X) \circ s = s \circ \pi(X)
    \]
    for all $X\in \kk$,}
    
    By Proposition~\ref{prop:wtDecomp}, $ \nu= \mu^\natural_\tk - \lambda^\natural_\tk$ is a weight of $\Sk(\pminus)$. Here, $\mu^\natural_\tk$ and $\lambda^\natural_\tk$ are weights in $\tk^*$. As both $\mu^\natural_\tk$ and $\lambda^\natural_\tk$ are integral combinations of $\gamma\in \HCso$, so is $\nu$. Since $\HCso \subseteq \Sigma(\pplus\ev, \tk)$, such integral coefficients in $\nu$ must be non-positive. Equivalently, $\lambda\supseteq \mu$.
\end{proof}

\begin{proof}[Proof of Theorem~\ref{thm:C}]
    As in the proof of Proposition~\ref{prop:weakC}, $c_\mu(\lambda)$ is the scalar by which $D_\mu$ acts on $I_\lambda^\kk$. 
    %Again by Eq.~(\ref{eqn:sameAction}) and the fact that $Z_\mu$ has to act uniformly by a scalar on $I_\lambda$, $c_\mu(\lambda)$ is the scalar by which $D_\mu$ acts on the entirety of $I_\lambda$. 
    %By Lemma~\ref{lem:homoImage}, the representation of $\Wmu$ on $I_\lambda^\kk$ always gives a homomorphic image of itself in $I_\lambda$. 
    Consider the $\kk$-action map $R$ as above. When restricted to $W_\mu \otimes I_\lambda^\kk \cong W_\mu$, the image $M$ is a quotient of $W_\mu$.
    If $\lambda \nsupseteq \mu$, then $M$ is zero by Proposition~\ref{prop:branchI(l)}. 			
    Spelling out $D_\mu$ as in Eq.~(\ref{eqn:Dmu}), we see that $D_\mu.I_\lambda^\kk \subseteq \Wmu.I_\lambda^\kk = \{0\}$.
    Thus,
    \begin{equation} \label{eqn:evenBetter}
        c_{\mu}(\lambda) = 0, \; \text{ for all } \lambda \nsupseteq \mu,
    \end{equation}
    and this proves that $Z_\mu$ acts on $I_\lambda$ by 0 unless $\lambda_i\ge\mu_i$ for all $i$. 
\end{proof}

Eq.~(\ref{eqn:evenBetter}) gives more vanishing points of $c_\mu$ than we actually need for proving Theorem~\ref{thm:A} in Section~\ref{sec:BCPoly}. It is stronger than Proposition~\ref{prop:weakC} which concerns only zeros at lower degrees, and is also closer to the original formulation of the vanishing properties, introduced in Theorem~\ref{cite:BCPoly}, c.f. Proposition~\ref{prop:noShift} below.

%==================================================================================================
\section{Type BC Interpolation Polynomials} \label{sec:BCPoly}
In this section, we specify Sergeev and Veselov's results (Subsection~\ref{subsec:SVBC}) with all the parameters in Table~\ref{tab:5para} (Subsection~\ref{subapp:RD}), and then an explicit change of variables. 
For the sake of simplicity, write $\cH$ for $\cH(p, q)$, $\cH_d$ for $\{\lambda\in \cH : |\lambda|\leq d\}$. Throughout the section, we set $ 1\leq i \leq p$, $1\leq j \leq q$.
%We now briefly explain these alternations.

We now specify using our restricted root system $\Sigma = \Sigma(\gk, \ak)$. We also refer to \cite[Appendix~A]{Z22} for a detailed explanation. 
In particular, we set $m := q, n := p$, $e_j := \dar_j$, $d_i := \epr_i$, $z_i := x_i$, $w_j := y_j$.
By Table~\ref{tab:5para}, $\fk = -1$, $\fq = \fs = -\frac{1}{2}$ while $\fp$ and $\fr$ are both 0. 
By comparing Eq.~(\ref{eqn:SVrho}) with $(-\frac{1}{2}) \rho$ as in Eq.~(\ref{eqn:akRho}), we have $\fh = q-p+\frac{1}{2}$.
Thus we have the two parameters
\[
\fk = -1, \quad \fh = q-p+\frac{1}{2}
\]
for $\Lambda^{(-1, q-p+\frac{1}{2})}_{q,p}$. 
Note our form Eq.~(\ref{eqn:akForm}) is the negative of what is defined in Subsection~\ref{subsec:SVBC}. But this does not matter when $\fk = -1$.
We set (c.f. Eq.~(\ref{eqn:lNat}))
\[
\overline{\lambda^\natural} :=  \sum \lambda_i\epr_i  +\sum\langle \lambda_j'-m \rangle \dar_j.
\]

% we get a properly normalized $I_\mu \in \Lambda^\varrho$ (not to be confused with the module $I_\mu$) of degree $2|\mu|$ such that 
% \begin{equation} \label{eqn:natVanCon}
%     I_\mu (\overline{\lambda^\natural}; \mathsf{h, k}) = 0
% \end{equation}
% for any $\lambda \in \cH$ such that $\lambda \nsupseteq \mu$.
% We refer to \cite{Z22} for a detailed discussion.
For convenience we write $\RingEv$ for $\RingEv(\ak^*)$, and define $\RingEv_d := \{f\in \RingEv: \deg f\leq 2d\}$. We define $2(\cH_d)^\natural+\rho := \left\{2\overline{\lambda^\natural}+\rho : \lambda\in \cH_d\right \} \subseteq \ak^*$.

\begin{prop} \label{prop:noShift}
    For each $\mu \in \cH$, there is a unique polynomial $J_\mu \in \RingEv$ of degree $2|\mu|$ such that
    \[
    J_\mu(2\overline{\lambda^\natural}+\rho) = 0,  \quad  \text{for all } \lambda \nsupseteq \mu,\, \lambda \in \cH
    \]
    and that $J_\mu(2\overline{\mu^\natural}+\rho) =  C_\mu^-(1; -1)C_\mu^+(2q-2p; -1)$.
    Moreover, $\{J_\mu:\mu\in \cH_d\}$ is a basis for $\RingEv_d$.
\end{prop}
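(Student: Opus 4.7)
The plan is to transfer Theorem~\ref{cite:BCPoly} to our setting via an affine change of variables. I would define
\[
J_\mu(X) := c_\mu \cdot I^{\textup{SV}}_{\mu'}\!\left(\tfrac{X - \rho}{2}\right),
\]
with the Sergeev--Veselov polynomial taken at parameters $(\fk,\fh) = (-1,\, q-p+\tfrac{1}{2})$, the partition $\mu' \in \cH(q,p)$ the transpose of $\mu \in \cH(p,q)$, and $c_\mu \in \C^\times$ chosen so that $J_\mu(2\overline{\mu^\natural}+\rho) = C_\mu^-(1;-1)\,C_\mu^+(2q-2p;-1)$. The transpose is needed because SV's polynomials are indexed by $\cH(m,n) = \cH(q,p)$ rather than $\cH(p,q)$; under the coordinate dictionary introduced at the start of Section~\ref{sec:BCPoly}, the SV vanishing point $(z(\lambda'), w(\lambda'))$ corresponds precisely to $\overline{\lambda^\natural}$ for $\lambda \in \cH(p,q)$, and $\lambda \nsupseteq \mu$ iff $\lambda' \nsupseteq \mu'$. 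This yields the vanishing $J_\mu(2\overline{\lambda^\natural}+\rho) = c_\mu I^{\textup{SV}}_{\mu'}(\overline{\lambda^\natural}) = 0$ directly from Theorem~\ref{cite:BCPoly}, and the degree $2|\mu|$ is preserved by the affine rescaling.

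Next, I would verify that $J_\mu \in \RingEv$. Since the SV Weyl vector equals $-\rho/2$, a direct computation shows that the SV-shifted variables $z_i - \rho_i\bos$ and $w_j - \rho_j\fer$ specialize to $x_i/2$ and $y_j/2$ respectively. Therefore the separate symmetry and sign-invariance of $I^{\textup{SV}}_{\mu'}$ in the shifted variables translate directly to the corresponding properties of $J_\mu$ in $x_i$ and $y_j$. The delicate point is the isotropic hyperplane translation invariance: the substitution $Y = (X-\rho)/2$ converts $X \to X+\alpha$ into $Y \to Y + \alpha/2$, whereas SV only supplies full-step invariance under $Y \to Y+\alpha$. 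My key technical observation will be that, since $\fk=-1$ renders $\alpha = \epr_i \pm \dar_j$ SV-isotropic, the SV polynomial restricted to the hyperplane $(z_i - \rho_i\bos) = \pm (w_j - \rho_j\fer)$ (parametrized by the common value $t$) is a polynomial in $t$ with period one, hence constant in $t$. Thus translation by any fraction of $\alpha$ leaves $I^{\textup{SV}}_{\mu'}$ unchanged on this hyperplane, in particular by $\alpha/2$, yielding $J_\mu(X+\alpha) = J_\mu(X)$ on $(X,\alpha) = 0$.

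For the basis statement and uniqueness, I would show that the map $\Phi \colon \Lambda^{(-1,q-p+1/2)}_{p,q} \to \RingEv$ defined by $\Phi(f)(X) = f((X-\rho)/2)$ is a filtration-preserving linear isomorphism. Injectivity is immediate; for surjectivity, given $g \in \RingEv$, the polynomial $f(Y) := g(2Y + \rho)$ lies in the SV space, using the iterated identity $g(X + 2\alpha) = g(X)$ on $(X,\alpha) = 0$, which follows from the isotropy $(\alpha,\alpha) = 0$ together with the single-step invariance of $g$. Since $\{I^{\textup{SV}}_\nu\}$ is a basis for the SV space by Theorem~\ref{cite:BCPoly}, its $\Phi$-image $\{J_\mu : \mu \in \cH(p,q)\}$ is a basis of $\RingEv$; restricting to $|\mu| \leq d$ yields the basis of $\RingEv_d$. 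Uniqueness of $J_\mu$ then follows from the triangular expansion in this basis: evaluating any candidate at the points $2\overline{\nu^\natural}+\rho$ for $\nu \in \cH_{|\mu|}$ in order of increasing $|\nu|$ forces its expansion coefficients to match those of $J_\mu$.

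The main obstacle I anticipate is the hyperplane translation verification in the $J_\mu \in \RingEv$ step: the naive pullback yields only a half-step shift $Y \to Y + \alpha/2$, which is not part of the SV axioms, and bridging this gap requires the isotropy-based constancy argument above. Every other part of the plan reduces to unpacking the transpose-and-rescale dictionary between the SV space and $\RingEv$.
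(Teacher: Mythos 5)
Your construction is the same as the paper's: pull back Sergeev--Veselov's $I^{\textup{SV}}_{\mu'}$ (with the transposed index and parameters $(\fk,\fh)=(-1,\,q-p+\tfrac{1}{2})$) through an affine rescaling of the argument, and read off the vanishing, degree, and basis statements from Theorem~\ref{cite:BCPoly}. The paper's proof packages the rescaling into a substitution $\tau$ and cites \cite[Proposition~5.9]{Z22} for the fact that it is an isomorphism between $\Lambda^{(-1,q-p+1/2)}_{p,q}$ and $\RingEv(\ak^*)$, then notes the normalization is straightforward. You unpack exactly this external citation, and the one genuinely nonobvious ingredient in it --- the isotropic-hyperplane translation invariance --- you handle correctly: the pullback under $X\mapsto (X-\rho)/2$ only yields a half-step shift, and you close the gap by observing that full-step invariance of a polynomial along an isotropic line forces constancy along that line, hence half-step invariance as well. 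That is the same point the cited reference must address, so the two arguments are substantively identical; yours simply makes the isomorphism self-contained.

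Two small remarks. First, the paper writes its substitution as $x_i\mapsto \tfrac{1}{2}(x_i-\rho_i^{\textsc{B}})$ using the Sergeev--Veselov $\rho$-components; since $\rho^{\textsc{B}}_i = -\tfrac{1}{2}\rho_i$ (with $\rho$ the restricted Weyl vector of Eq.~(\ref{eqn:akRho})), this is off by a constant shift from the substitution $x_i\mapsto \tfrac{1}{2}(x_i-\rho_i)$ that actually maps $2\overline{\lambda^\natural}+\rho$ to the Sergeev--Veselov vanishing grid; your choice $(X-\rho)/2$ is the correct one for the vanishing to hold verbatim, so you have in effect corrected a slip in the paper's formula. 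Second, you insert an ad hoc scalar $c_\mu$ and say it is chosen to match the normalization; to justify its existence you should note that the Sergeev--Veselov normalization constant in Theorem~\ref{cite:BCPoly} pins down $I^{\textup{SV}}_{\mu'}$ at the point corresponding to $2\overline{\mu^\natural}+\rho$, so that no further rescaling is actually necessary (up to the bookkeeping that $C_{\mu'}^\pm$ versus $C_\mu^\pm$ agree at $\fk=-1$), which is what the paper means by ``the normalization condition is straightforward.''
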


\begin{proof}
    Let $\tau: z_i \mapsto \frac{1}{2}(x_i-\rho_{2i})$ and $w_j \mapsto \frac{1}{2}(y_j-\rho_{1j})$ be the change of variables which is an isomorphism from $\Lambda^{(-1, q-p+\frac{1}{2})}_{p, q}$ to $\RingEv(\ak^*)$. %(c.f. \cite[Proposition~5.9]{Z22}). 
    Consider $I^{\textup{SV}}_{\mu'}(X; -1,  q-p+\frac{1}{2}) \in \Lambda^{(-1, q-p+\frac{1}{2})}_{p, q}$.
    Since $(z(\lambda), w(\lambda)) = (\lambda')^\natural$, by Theorem~\ref{cite:BCPoly}, we see that $J_\mu (X):=\tau(I^{\textup{SV}}_{\mu'}(X; -1,  q-p+\frac{1}{2}))$ satisfies the vanishing properties
    \begin{equation} \label{eqn:fullvan}
        J_\mu(2\overline{\lambda^\natural}+\rho) = 0,  \quad \text{for all } \lambda \nsupseteq \mu,\, \lambda \in \cH.
    \end{equation}
    The normalization condition is straightforward. 
    As $\{I^{\textup{SV}}_{\mu'}: \mu\in \cH_d\}$ is a basis for $\{f\in \Lambda^{(-1, q-p+\frac{1}{2})}_{p, q}: \deg f \leq 2d\}$, $\{J_\mu: \mu\in \cH_d\}$ becomes a basis for $\RingEv_d$. 
\end{proof}

\begin{prop} \label{prop:resSurj}
    Every $f\in \RingEv_d$ is determined by its values on $2(\cH_d)^\natural+\rho$.
\end{prop}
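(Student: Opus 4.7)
The plan is to read this as an immediate consequence of Proposition~\ref{prop:noShift}: the basis $\{J_\mu:\mu\in\cH_d\}$ together with its vanishing/normalization properties sets up a triangular system of evaluations.

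First, I would expand an arbitrary $f\in\RingEv_d$ in the basis, $f=\sum_{\mu\in\cH_d}c_\mu J_\mu$, and then evaluate at each point in $2(\cH_d)^\natural+\rho$ to get the linear system
\[
f(2\overline{\lambda^\natural}+\rho)=\sum_{\mu\in\cH_d}c_\mu\,J_\mu(2\overline{\lambda^\natural}+\rho),\qquad \lambda\in\cH_d.
\]
To finish it suffices to show the square matrix $M_{\lambda,\mu}:=J_\mu(2\overline{\lambda^\natural}+\rho)$ is invertible.

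Next, fix any total order $\preceq$ on $\cH_d$ refining containment (for instance, order by $|\mu|$, breaking ties lexicographically). Two observations make $M$ triangular with nonzero diagonal. The vanishing in Proposition~\ref{prop:noShift} gives $M_{\lambda,\mu}=0$ whenever $\mu\not\subseteq\lambda$; but $\mu\subseteq\lambda$ implies $\mu\preceq\lambda$, so $M_{\lambda,\mu}=0$ whenever $\mu\succ\lambda$, i.e.\ $M$ is lower-triangular in this ordering. On the diagonal, the normalization gives
\[
M_{\lambda,\lambda}=J_\lambda(2\overline{\lambda^\natural}+\rho)=C_\lambda^-(1;-1)\,C_\lambda^+(2q-2p;-1),
\]
which is nonzero (it must be nonzero, or else $J_\lambda$ and its scalar multiples would all satisfy the characterizing conditions of Proposition~\ref{prop:noShift}, contradicting the basis statement).

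Therefore $M$ is invertible, the coefficients $c_\mu$ are uniquely determined by the values $\{f(2\overline{\lambda^\natural}+\rho):\lambda\in\cH_d\}$, and hence so is $f$. There is no real obstacle here; the only mild subtlety is justifying that the diagonal factor $C_\lambda^+(2q-2p;-1)$ is nonzero, which one can extract either directly (by checking it never annihilates a hook-partition cell under the $\cH(p,q)$ constraint) or, more economically, from the uniqueness clause of Proposition~\ref{prop:noShift} itself.
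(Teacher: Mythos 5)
Your argument is essentially the paper's: both express the evaluation map as a matrix in the $\{J_\mu\}$ basis, order $\cH_d$ so that containment (equivalently, size) forces triangularity via the vanishing in Proposition~\ref{prop:noShift}, and conclude invertibility from the nonzero diagonal. Your extra remark deducing $J_\mu(2\overline{\mu^\natural}+\rho)\neq 0$ from the uniqueness clause of Proposition~\ref{prop:noShift} is a slightly more careful justification of a step the paper simply asserts, but the approach is the same.
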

\begin{proof}
    Let $\mathcal{V}_d$ be the space of functions on $2(\cH_d)^\natural+\rho$.
    Then we see that $\dim \RingEv_d = \dim \mathcal{V}_d= |\cH_d|$.
    In particular, $\mathcal{V}_d$ has a Kronecker-delta basis $\{\delta_{\lambda}: 
    \delta_\lambda(2\overline{\lambda^\natural}+\rho) = 1, \delta_\lambda(2\overline{\mu^\natural}+\rho)=0, \lambda, \mu \in \cH_d \}$. 
    Next, the evaluation of $f\in \RingEv_d$ on $\cH_d$ gives a restriction map
    \[
    \mathtt{res}: \RingEv_d \rightarrow \mathcal{V}_d.
    \]
    To prove the statement, we show that $\mathtt{res}$ is an isomorphism. 

    Fix a total order $\succ$ on $\cH_d$ such that $\mu \succ \lambda$ implies $|\mu| \geq |\lambda|$. Consider the matrix $R$ for $\mathtt{res}$ with respect to the bases $\{J_\mu\}$ for $\RingEv_d$ and $\{\delta_\lambda\}$ for $\mathcal{V}_d$ arranged by $\succ$. 
    Since $J_\mu(2\overline{\mu^\natural}+\rho) \neq 0$, and $J_\mu(2\overline{\lambda^\natural}+\rho) = 0$ for any $\lambda$ such that $\mu \succ \lambda$, we see that $R$ is upper triangular with non-zero diagonal entries. 
    Therefore $R$ is invertible, proving the statement.
\end{proof}

To prove Theorem~\ref{thm:A}, we first show that $\hcIsoGK(\sD_\mu)$ is proportional to $J_\mu$ defined above.
Then we pin down the scalar
\begin{equation} \label{eqn:kmu}
    k_\mu := (-1)^{|\mu|}C_\mu^-(1; -1).
\end{equation}
(see Subsection~\ref{subsec:SVBC} for $C_\mu^-$) by comparing the top homogeneous degrees of both sides. 
Here
\[
C_\mu^-(1;-1) = \prod_{(i, j)\in \mu} \left(\mu_i-j+\mu'_j-i+1\right)
\]
(c.f. Subsection~\ref{subsec:SVBC}). The factor of $(-1)^{|\mu|}$ is due to the definition of $d_\mu$, which is a result of the definition of $\varphi_T$ in \cite[Eq.~(41)]{SV05SuperJack}.
\begin{proof}[Proof of Theorem~\ref{thm:A}]
    We show that 
    \[
    \hcIsoGK(\sD_\mu) = k_\mu J_\mu.
    \]
    
    \textit{Step 1.} 
    From the definition of $\hcHomoGK$ and $\hcIsoGK$, we see that $\deg \hcIsoGK(\sD_\mu) \leq 2|\mu|$.
    By the commutative diagram (\ref{diag:tight}), we have $\hcIsoGK(\pi(Z_\mu))=  \Res(\hcZIso(Z_\mu))$.
    The left side is just $\hcIsoGK(\sD_\mu)$.
    The right side, when applied to $2\overline{\lambda^\natural}+\rho$ is just $\hcZIso(Z_\mu) (2\lambda^\natural_\hk+\rho_\hk)$
    by Eq.~(\ref{eqn:resEval}). Hence,
    \begin{equation} \label{eqn:vanProp}
    \hcIsoGK(\sD_\mu)(2\overline{\lambda^\natural}+\rho) = \hcZIso(Z_\mu) (2\lambda^\natural_\hk+\rho_\hk).
    \end{equation}
    By Theorem~\ref{thm:C}, $Z_\mu$ acts by 0 on $I_\lambda$ for all $\lambda \nsupseteq \mu$, $\lambda \in \cH$, thus $Z_\mu$ acts on the quotient $V_\lambda$ by 0. But by Eq.~(\ref{eqn:naturalV}), its $\bk$-highest weight is $2\lambda^\natural_\hk$, and this implies $\hcZIso(Z_\mu)(2\lambda^\natural_\hk+\rho_\tk) = 0$. Therefore
    \[
    \hcIsoGK(\sD_\mu)(2\overline{\lambda^\natural}+\rho) = 0
    \]
    for all $\lambda \nsupseteq \mu$, $\lambda \in \cH$, proving the vanishing properties.

    Alternatively, by Proposition~\ref{prop:weakC}, we have the ``relaxed" vanishing properties $\hcIsoGK(\sD_\mu)(2\overline{\lambda^\natural}+\rho) = 0$ when$ \lambda \neq \mu$, $ |\lambda| \leq |\mu|$.
    By Proposition~\ref{prop:resSurj}, this also implies the full ``extra" vanishing properties.

    Therefore, $\hcIsoGK(\sD_\mu)$ is proportional to $J_\mu$ by its vanishing properties.

    \textit{Step 2.} 
    We first pick a basis for $\pplus$ extended from
    \[
    \{\ii A_{i, p+i}, \ii D_{j, q+j}\},
    \]
    and a dual basis for $\pminus \cong (\pplus)^*$ (identified via the bilinear form $b$) extended from
    \[
    \{-2\ii A_{i, p+i}, 2\ii D_{j, q+j}\}.
    \]
    Thus the identity map $1_{\pplus} \in \End_{\kk}(\pplus)$ corresponds to 
    \begin{equation} \label{eqn:idpplus}
        \sum_{i=1}^p \ii A_{i, p+i}\otimes(-2\ii A_{i, p+i}) + \sum_{j=1}^q\ii D_{j, p+j}\otimes(2\ii D_{j, p+j}) + \text{other terms}
    \end{equation}
    in $\pminus \otimes \pplus$.
    On the other hand, we may extend the basis $\{x_i, y_j\}$ for $\ak$ (see Subsection~\ref{subsec:realize}) to a basis for $\gk$ according to $\gk = \nk^-\oplus \ak \oplus \kk$. This is used to define $\hcHomoGK$ and $\hcIsoGK$.
    Then it is a direct computation to see that the top homogeneous degree of
    \[
    \hcHomoGK(D_{(1)}) = \hcIsoGK(\sD_{(1)})
    \]
    is precisely 
    \[
    \frac{1}{2}\sum_{i=1}^p x_i^2- \frac{1}{2} \sum_{j=1}^q y_j^2.
    \]
    Note the $\rho$ shift does not change this top degree.
    
    Now we let $\mu\in \cH^m$. The identity map $1_\mu$ on $W_\mu$ corresponds to an element in $W_\mu^* \otimes W_\mu$.
    Thus the sum $\sum_{\mu \in \cH^m} 1_{\mu}$ corresponds to an element in $\Sk^m(\pminus) \otimes \Sk^m(\pplus)$ by the decomposition in Proposition~\ref{prop:goodCW}.
    On $\Sk^m(\pk)\subseteq T^m(\pk)$, the bilinear form is induced from $b$ on $\pk$ by normalizing $b^{\otimes m}$ by $1/m!$.
    Similarly, we see that the top homogeneous degree of $\hcIsoGK\left(\sum_{\mu \in \cH^m} \sD_\mu\right)$ is given by
    \begin{equation} \label{eqn:topDeg}
        \frac{1}{m!}\left(\frac{1}{2}\sum_{i=1}^p x_i^2- \frac{1}{2} \sum_{j=1}^q y_j^2 \right)^m =\frac{1}{2^m m!} \left(p_2^{(p, q)}(x_i, y_j)\right)^{m}.
    \end{equation}
    Note $d_\lambda$ (Eq.~(\ref{eqn:dlam})) specializes to $(-1)^{|\lambda|}$.
    From Eq.~(\ref{eqn:SVhat}), Theorems~\ref{cite:SVtab} and \ref{cite:SVThm11}, we see that 
    \[
    I^{\textup{SV}}_{\mu'}\left(x_i, y_j; -1, q-p+\frac{1}{2}\right) = (-1)^{|\mu|}\hat{I}_{\mu}\left( x_i, y_j; -1, \frac{1}{2}-(q-p)\right).
    \]
    and the top homogeneous degree of the right side is $(-1)^{|\mu|}SP_{\mu}(x_i^2, y_j^2; 1)$.
    For $J_\mu$, this becomes
    \[
    (-1/2)^{|\mu|}SP_{\mu}(x_i^2, y_j^2; 1).
    \]
    Thus by setting $\theta = 1$ in Eq.~(\ref{eqn:expJack}), 
    \begin{equation}
        \frac{1}{2^m m!}\left(\sum_{i=1}^p x_i^2- \sum_{j=1}^q y_j^2 \right)^m = \sum_{\mu \in \cH^m} 2^{-m}C_\mu^-(1; -1) (-2)^{|\mu|}\left((-1/2)^{|\mu|}SP_\mu(x_i^2, y_j^2; 1)\right).
    \end{equation}
    Comparing the coefficients, we have
    \begin{equation} 
        \hcIsoGK(\sD_\mu) = (-1)^{|\mu|}C_\mu^-(1; -1) J_\mu.
    \end{equation}
    Therefore the coefficient is precisely $k_\mu$ given by Eq.~(\ref{eqn:kmu}).
\end{proof}

Additionally, we can answer the interesting question of by which scalar $D_\mu$ acts on the spherical vector $\omega \in I_\mu^\kk$, without any direct consideration of the module $I_\mu$ itself! 

\begin{cor} \label{cor:eval}
    The operator $D_\mu \in \Uk^\kk$ acts on $I_\mu^\kk$ by $(-1)^{|\mu|}C_\mu^-(1; -1)^2C_{\mu}^+(2q-2p; -1)$.
\end{cor}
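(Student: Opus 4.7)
The plan is to assemble the corollary from results already established in the paper, rather than analyze $I_\mu$ directly. The only nontrivial content is tracking how the scalar by which $D_\mu$ acts on $\omega \in I_\mu^\kk$ can be read off the Harish-Chandra image $\hcIsoGK(\sD_\mu)$ evaluated at the appropriate point.

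First, I would note that by Lemma~\ref{lem:ukScalar} applied to $I_\mu$ (which is $\kk$-spherical with one-dimensional invariants by Proposition~\ref{prop:GVMsph}), the operator $D_\mu$ acts on $\omega$ by a scalar $c_\mu(\mu)$. Pulling $Z_\mu \in \Zk$ satisfying $\pi(Z_\mu) = \sD_\mu$ (Theorem~\ref{thm:B}), the identity (\ref{eqn:sameAction}) gives $c_\mu(\mu)\omega = D_\mu.\omega = Z_\mu.\omega$, and since $Z_\mu$ is central and $I_\mu$ is a highest weight module, $c_\mu(\mu) = \hcZIso(Z_\mu)(2\mu^\natural_\hk + \rho_\hk)$ via Eq.~(\ref{eqn:ZCharHC}).

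Second, I would invoke the commutative diagram (\ref{diag:tight}) and Eq.~(\ref{eqn:resEval}) exactly as in Step~1 of the proof of Theorem~\ref{thm:A}, which produced Eq.~(\ref{eqn:vanProp}):
\[
\hcIsoGK(\sD_\mu)\bigl(2\overline{\mu^\natural}+\rho\bigr) = \hcZIso(Z_\mu)\bigl(2\mu^\natural_\hk + \rho_\hk\bigr).
\]
Thus $c_\mu(\mu) = \hcIsoGK(\sD_\mu)(2\overline{\mu^\natural}+\rho)$. By Theorem~\ref{thm:A}, $\hcIsoGK(\sD_\mu) = k_\mu J_\mu$, and by the normalization clause in Proposition~\ref{prop:noShift},
\[
J_\mu\bigl(2\overline{\mu^\natural}+\rho\bigr) = C_\mu^-(1;-1)\,C_\mu^+(2q-2p;-1).
\]
Combining these with $k_\mu = (-1)^{|\mu|}C_\mu^-(1;-1)$ from Eq.~(\ref{eqn:kmu}) yields
\[
c_\mu(\mu) = (-1)^{|\mu|}\,C_\mu^-(1;-1)^2\,C_\mu^+(2q-2p;-1),
\]
which is the claimed formula.

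There is no real obstacle: the calculation is essentially free once Theorem~\ref{thm:A} and the normalization of $J_\mu$ are in hand. The only point requiring care is ensuring the scalar $c_\mu(\mu)$ is correctly identified with $\hcIsoGK(\sD_\mu)$ evaluated at $2\overline{\mu^\natural}+\rho$ (and not, say, at $2\overline{\mu^\natural}$ alone); this is guaranteed because the $\rho$-shift is built into the definition of $\hcHomoGK$ via Eq.~(\ref{eqn:negChoice}) and tracks through the commutative diagram as in the proof of Proposition~\ref{prop:commDiag}. The reason this is worth highlighting as a corollary is precisely that it circumvents any direct PBW computation inside $I_\mu$, deriving the eigenvalue on the spherical vector purely from the interpolation polynomial's normalization at its distinguished vanishing point $\mu$.
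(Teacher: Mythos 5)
Your proof is correct and takes essentially the same route as the paper: identify the scalar with $\hcIsoGK(\sD_\mu)(2\overline{\mu^\natural}+\rho)$, substitute $\hcIsoGK(\sD_\mu)=k_\mu J_\mu$ from Theorem~\ref{thm:A}, and plug in the normalization of $J_\mu$ from Proposition~\ref{prop:noShift}. The only difference is cosmetic: you spell out the intermediate chain through $Z_\mu$, the central character, and the commutative diagram, whereas the paper compresses that to a single sentence and phrases the normalization in terms of $I^{\textup{SV}}_{\mu'}$ rather than $J_\mu$ directly.
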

\begin{proof}
This scalar is the value of $\hcHomoGK(D_\mu) = \hcIsoGK(\sD_\mu)$ at $2\overline{\mu^\natural}+\rho$ which equals the value of $k_\mu I^{\textup{SV}}_{\mu'}$ at $(z(\mu'), w(\mu'))$:
    \begin{align*}
    \hcIsoGK(\sD_\mu)(2\overline{\mu^\natural}+\rho) &= k_\mu C_{\mu'}^-(1; -1)C_{\mu'}^+(2q-2p; -1) \\
    &= (-1)^{|\mu|}C_\mu^-(1; -1)^2C_{\mu}^+(2q-2p; -1).  %\tag*{\qedhere}
\end{align*}
\end{proof}

\appendix

\section{Spherical and cospherical modules} \label{app:A}

Let $\gk = \nk \oplus \ak \oplus \kk$ be the Iwasawa decomposition where we set $\nk$ as in Subsection~\ref{subsec:special}. In particular, $\nk\oplus\ak$ extends to the opposite Borel subalgebra $\bk^-$ which is useful in the following consideration.
Let $V = V(\lambda, \bk)$ be irreducible. The dual module $V^*$ is generated by a lowest functional $f_{-}$ of weight $-\lambda$. Then we have $\bk^-.f_- = 0$. We also have $\gk = \kk+\bk^-$ from the Iwasawa decomposition.
\begin{prop}\label{prop:non0} 
In the above notation, if $V$ is spherical and $v^\kk\in V^\kk$ is non-zero, then the canonical pairing $\langle v^\kk, f_{-} \rangle$ is nonzero.
\end{prop}

\begin{proof}
We prove this by way of contradiction. Suppose $\langle v^\kk, f_{-} \rangle = 0$. Consider $\langle v^\kk, u.f_{-} \rangle$ for any $u\in \Uk$. By the Poincar\'e--Birkhoff--Witt theorem, we may write $u =  u_\kk u_{-} $ with $u_\kk \in \Uk(\kk), u_{-} \in \Uk(\bk^-)$. 
Since $\Uk(\bk^-)$ acts on $f_{-}$ by a character and so does $\Uk(\kk)$ on $v^\kk$, we get
\begin{equation*}
    \langle v^\kk, u.f_{-} \rangle = \langle v^\kk, u_\kk u_\qk .f_{-} \rangle
    = b\langle v^\kk, u_\kk .f_{-} \rangle
    =ab\langle v^\kk, f_{-}\rangle
\end{equation*}
where $a, b$ are scalars determined by $u_-$ and $u_\kk$ respectively. The third identity follows by the contragredient action of $u_\kk$ on $v^\kk$.
Thus, $v^\kk$ is a vector on which $\Uk.f_{-} = V^*$ vanishes, which implies $v^\kk=0$, a contradiction.
\end{proof}

\begin{proof}[Proof of Proposition~\ref{cite:scalarClaim}]
We calculate how an arbitrary $D\in \mathfrak{U^\kk}$ acts on $v^\kk$.  Let us consider the pairing $\langle D.v^\kk, f_{-} \rangle$.
By Lemma~\ref{lem:ukScalar}, this pairing equals $c\langle v^\kk, f_{-} \rangle$ for some scalar $c$ depending on $D$.

We write $D = uK+Nu'+p \in \Uk^\kk$ where $p= p(D)$ is the $\Sk(\ak)$ projection of $D$, and $K\in \kk, N\in \nk, u, u'\in \Uk$. By this decomposition, 
\begin{equation*}
    \langle D.v^\kk, f_{-} \rangle = \langle uK.v^\kk, f_{-} \rangle +\langle N u'.v^\kk,  f_{-} \rangle + \langle p.v^\kk,  f_{-} \rangle  =0+0+\langle p.v^\kk, f_{-} \rangle .
\end{equation*}
The second equality follows since $K.v^\kk = 0$ and $N\in \bk^-$ acts by 0 on $f_-$ contragrediently.

We then evaluate the contragredient action of $p$ on $f_-$. Let $p = \sum \prod a_i^{n_i}$ be in $\Sk(\ak) \cong \Pk(\ak^*)$. All $a_i$'s are even and each produces a negative sign when moved to the right side of the pairing, so 
\begin{align*}
    \langle p.v^\kk, f_{-} \rangle    &= \sum \prod \langle v^\kk, (-1)^{n_i}a_i^{n_i}.f_{-} \rangle \\
    &= \sum\prod (-1)^{n_i} \left(a_i\left(-\lambda\right)\right)^{n_i}\langle v^\kk, f_{-} \rangle \\
    &= \sum \prod (-1)^{n_i}(-1)^{n_i}a_i\left(\lambda\right)\langle v^\kk, f_{-} \rangle\\
    &= p\left(\lambda\right)\langle v^\kk, f_{-} \rangle.
\end{align*}
By Proposition~\ref{prop:non0}, we have $c = p\left(\lambda\right)$, which by Eq.~(\ref{eqn:g0+rho=p}) equals $\hcHomoGK(D) \left(\lambda+\rho\right)$.
\end{proof}

Recall $\hk = \ak\oplus\tk_+$ where $\tk_+ = \hk\cap \kk$.
Any $\lambda \in \ak^*$ may be extended to $\hk^*$ by this decomposition, denoted again by $\lambda$, which vanishes on $\tk_+$.
A module $V$ is said to be \emph{cospherical} if $V^*$ is spherical. 
Recall $\bk$ from Subsection~\ref{subsec:special}. We say a $\bk$-highest weight $\lambda$ is \emph{weakly cospherical} if there exists a finite dimensional cospherical module of highest weight $\lambda$. 
In $\ak^*$ we define the following subsets:
\begin{align*}
    F &:= \{\lambda\in \ak^*: V(\lambda, \bk) \text{ is finite dimensional} \}, \\
    C &:= \{\lambda\in \ak^*: \lambda \text{ is weakly cospherical}  \}, \\
    S &:= \{\lambda\in \ak^*: \lambda \text{ is spherical}  \}.
\end{align*}
Clearly $C\subseteq F$. We refer to Definition~\ref{def:sphWt} for $S$.
By \cite[Proposition~4.5]{Z22}, if $V(\lambda, \bk)$ is spherical, then $V(\lambda, \bk)$ is also cospherical. Therefore we have
\[
S\subseteq C \subseteq F.
\]
We show in Proposition~\ref{prop:sphZariski} below that all three subsets are Zariski dense in $\ak^*$.

The following result is \cite[Proposition~6.4.2]{ShermanThesis}. 

\begin{prop} \label{prop:2lambda}
    If $\lambda \in F$, then there exists a finite dimensional module $W$ of $\bk$-highest weight $2\lambda$ which is cospherical. In other words, $2F \subseteq C$.
\end{prop}

\begin{proof}
    Let $\lambda\in F$ and $V := V(\lambda, \bk)$. Let $v\in V$ be a highest weight vector. Denote the representation map as $\phi$. Define a new $\gk$-module structure $V^\theta$ on $V$ by setting $\phi^\theta(X).u := \phi(\theta(X)).u$ for any $X\in \gk$ and $u\in V$.  Notice that $V^\theta $ and $V$ are isomorphic \textit{as $\kk$-modules} since $\theta$ fixes $\kk$. 

    Since $\lambda|_{\tk_+} = 0$, we have $\theta \lambda = -\lambda$. Let $\supp{V}$ denote the set of weights of a weight module $V$.  Thus $\supp{(V^\theta)^*} = \{-\theta(\mu) : \mu \in \supp{V}\} = \supp{V}$. Therefore $(V^\theta)^* \cong V$.
    Let $W \subseteq V \otimes V$ be generated by $w:=v\otimes v$. Then $W$ is finite dimensional and of highest weight $2\lambda$. We show that $W$ is cospherical. 
    Notice that $(V \otimes V)^* \cong (V \otimes (V^\theta)^*)^* \cong \Hom_\gk(V, V^\theta)$. Then the cospherical $\kappa$ on $V \otimes V$, corresponding to $\mathrm{Id}: V \rightarrow V^\theta$ does not vanish on $w$. Thus the restriction of $\kappa$ on $W$ is non-zero. 
\end{proof}

\begin{prop} \label{prop:sphZariski}
    All three subsets $S \subseteq C \subseteq F$ are Zariski dense in $\ak^*$. 
\end{prop} 
\begin{proof}
    We first show that $F$ is Zariski dense. Recall $\lambda^\natural_\hk$ from Eq.~(\ref{eqn:akEvenWt}) and $\lambda^\natural_\hk|_{\tk_+}=0$. In $\ak^*$, we let
    \[
    N := \{\lambda_\hk^\natural: \lambda\in \cH, \; \lambda_1 \geq \cdots \geq \lambda_p \geq \left \langle \lambda_1'-p \right \rangle \geq \cdots \geq \left \langle \lambda_q'-p \right \rangle \geq 0\}.
    \]
    The above $p+q$ inequalities define an open cone in $\ak^*$. Therefore, $N$ is Zariski dense as a subset of the lattice $\mathbb{Z}^{p+q}$ in $\ak^*$.
    By \cite[Proposition~3.4]{Z22}, $V_\lambda = V(2\lambda^\natural_\hk, \bk)$ is finite dimensional for any such $\lambda^\natural_\hk$. %Also $\lambda^\natural_\hk$ vanishes on $\tk_+$. 
    Thus $F \supseteq 2N$ is Zariski dense. 

    Since $2F\subseteq C$ by Proposition~\ref{prop:2lambda}, we see that $C$ is Zariski dense.
    
    Finally we show that $S$ is Zariski dense in three steps. In \textit{Step 1}, we construct a special subset $K\subseteq S$. In \textit{Step 2}, we show that $K$ is ``even-supersymmetric Zariski dense", and in \textit{Step 3}, we prove that this implies that $K$ is Zariski dense, hence so is $S$.

    \textit{Step 1.} Let $\lambda \in C$. By definition there is a finite dimensional cospherical $W$ of highest weight $\lambda$.
    Let $\kappa \in (W^*)^\kk$ be non-zero and $M$ be the sum of all submodules of $W$ on which $\kappa$ vanishes. Let $w\in W$ be a highest weight vector. By \cite[Proposition~4.3]{Z22}, we have $\langle w, \kappa \rangle \neq 0$. So $M$ is proper and $\lambda$ is also the highest weight of the quotient $W':= W/M$. 
    By construction, any submodule of $W'$ is cospherical. Let $U$ be an irreducible submodule of $W'$ which exists as $W'$ is finite dimensional.
    By \cite[Proposition~4.5]{Z22}, since $U$ is cospherical and irreducible, $U$ is also spherical. 
    We denote the highest weight of $U$ by $\lambda^\circ$ and define
    \[
    K := \{\lambda^\circ: \lambda\in  C\}\subseteq S.
    \]

    \textit{Step 2.} We show that if $g\in \RingEv(\ak^*)$ and $g$ vanishes on $K$, then $g$ is identically $0$. 
    By the surjectivity of $\Res$ (Proposition~\ref{prop:Res}) and $\hcZIso$, for any $g\in \RingEv(\ak^*)$ there exists $z\in \Zk$ such that $g = \Res \circ \hcZIso(z)$. 
    Consider the action of $z$ on $U$ of highest weight $\lambda^\circ$. Since $U$ is a submodule of $W'$ of highest weight $\lambda$, we have $\hcZIso(z)(\lambda^\circ+\rho) = \hcZIso(z)(\lambda+\rho)$. Suppose $g(K) = 0$, then this implies
    \[
    g(\lambda^\circ)  = \Res\circ\hcZIso(z)(\lambda +\rho) = 0
    \]
    for all $\lambda\in C$. We have proved that $C$ is Zariski dense in $\ak^*$. Hence $g$ is identically 0.

    \textit{Step 3.} We now show that for any $f \in \Pk(\ak^*) = \C[x_i, y_j]$ that vanishes on $K$, $f$ is identically 0.
    Suppose $f(K) = 0$. Set $f_s:= (x_1+y_1)f$, which is independent of $t$ under the substitution $x_1 =- y_1 = t$.
    Recall the Weyl group $W_0$ is $(\mathscr{S}_p \ltimes (\Z/2\Z)^p)\times (\mathscr{S}_q\ltimes (\Z/2\Z)^q)$. We further define 
    \[
    g:=\prod_{w\in W_0} wf_s.
    \]
    We see that $g\in \RingEv(\ak^*)$ and $g(K) = 0$. Here $wf(x):=f(wx)$. By \textit{Step 2}, we know that $g=0$. Since $\Pk(\ak^*)$ is an integral domain, there must be a term $wf_s$ which is 0. Applying $w^{-1}$ we get $f_s = (x_1+y_1)f = 0$. Therefore, we must have $f = 0$.

    In conclusion, $K$ is Zariski dense and thus $S$ is also Zariski dense. 
\end{proof}

\fixIt{
\begin{rmk}\label{rmk:infDim}
    We give an example where $V_\lambda$ fails to be finite dimensional for $(\gl(p+r|q+s), \gl(p|q) \oplus \gl(r|s))$.  Set $p=q=2$ and $s=r=1$. Then Cheng--Wang decomposition gives $(1,1)$-hooks. Take $\lambda = (1,1)$ for example. The $\mathfrak{t}$-weight of $V_\lambda = V(\lambda_\tk^\natural, \bk^\natural)$ is
    \begin{align*}
        \lambda_{\mathfrak{t}}^\natural &= 1\gamma_1^{\textsc{B}}+1\gamma_1^{\textsc{F}} = 1 \epsilon_1^+ +0\epsilon_2^+ + 1\delta_1^+ + 0 \delta_2^+ + (-1)\delta_1^- + (-1)\epsilon_1^- .
    \end{align*}
    In notation of \cite[Section~4.2]{Z22} this weight can also be written as
    \[
    (\overset{\bullet}{1}, \overset{\bullet}{0} | \overset{\times}{1},\overset{\times}{0},\overset{\times}{-1}|\overset{\bullet}{-1}).
    \]
    To see if $V_\lambda$ is finite dimensional or not, we perform a sequence of odd reflections (see \cite[Section~4.2]{Z22} for details). For the above weight, we push $\overset{\bullet}{-1}$ all the way through the deltas ($\times$) and check if the resulting tuple is dominant. When swapping $\overset{\times}{x}| \overset{\bullet}{y}$, we get $\overset{\bullet}{y}|\overset{\times}{x}$ if $x+y=0$, or $\overset{\bullet}{y+1}|\overset{\times}{x-1}$ otherwise.
    \begin{align*}
        &(\overset{\bullet}{1}, \overset{\bullet}{0} | \overset{\times}{1},\overset{\times}{0},\overset{\times}{-1}|\overset{\bullet}{-1}) 
        \leadsto (\overset{\bullet}{1}, \overset{\bullet}{0} | \overset{\times}{1},\overset{\times}{0}|\overset{\bullet}{0}|\overset{\times}{-2}) 
        \leadsto (\overset{\bullet}{1}, \overset{\bullet}{0} | \overset{\times}{1}|\overset{\bullet}{0} |\overset{\times}{0},\overset{\times}{-2}) 
        \leadsto (\overset{\bullet}{1}, \overset{\bullet}{0} , \overset{\bullet}{1} |\overset{\times}{0},\overset{\times}{0},\overset{\times}{-2})
    \end{align*}
    It is clear that the last weight is not dominant. This is due to the extra 0 introduced when $p>r$. Had we taken, for instance, $p=q=r=s=1$, then the weight would be $(\overset{\bullet}{1} | \overset{\times}{1},\overset{\times}{-1}|\overset{\bullet}{-1})$ and the issue would not arise.
\end{rmk}
}
%==================================================================================================

%\section*{}
\bibliographystyle{amsalpha}
\bibliography{ref}

\end{document}